\title{Degeneration of Calabi-Yau metrics and canonical basis}
\author{Yang Li}
\date{\today}
\newtheorem{thm}{Theorem}[section]
\newtheorem{lem}[thm]{Lemma}
\theoremstyle{definition}
\newtheorem{eg}[thm]{Example}
\newtheorem{conj}[thm]{Conjecture}
\newtheorem{cor}[thm]{Corollary}
\newtheorem{rmk}[thm]{Remark}
\newtheorem{prop}[thm]{Proposition}
\newtheorem{Def}[thm]{Definition}
\newtheorem*{Question}{Question}
\newtheorem*{Acknowledgement}{Acknowledgement}
\newcommand{\cf}{\emph{cf.} }
\newcommand{\R}{\mathbb{R}}
\newcommand{\C}{\mathbb{C}}
\newcommand{\Z}{\mathbb{Z}}
\newcommand{\N}{\mathbb{N}}
\newcommand{\Q}{\mathbb{Q}}
\newcommand{\norm}[1]{\left\lVert#1\right\rVert}
\def\Xint#1{\mathchoice
	{\XXint\displaystyle\textstyle{#1}}%
	{\XXint\textstyle\scriptstyle{#1}}%
	{\XXint\scriptstyle\scriptscriptstyle{#1}}%
	{\XXint\scriptscriptstyle\scriptscriptstyle{#1}}%
	\!\int}
\def\XXint#1#2#3{{\setbox0=\hbox{$#1{#2#3}{\int}$ }
		\vcenter{\hbox{$#2#3$ }}\kern-.6\wd0}}
\def\dashint{\Xint-}
\begin{document}
	\maketitle

	\begin{abstract}
		For polarised degenerations of Calabi-Yau manifolds whose essential skeleton has dimension $1\leq m\leq n$, we show that  the $C^0$ potential theoretic limit of the Calabi-Yau metrics agrees with the non-archimedean Calabi-Yau metric on the Berkovich analytification. Moreover, this limit data can be encoded into the unique minimiser of the Kontorovich functional of an optimal transport problem, under some algebro-geometric assumptions on the existence of a canonical basis of sections for tensor powers of the polarisation line bundle.
	\end{abstract}

	\section{Introduction}

	Let $X\to \mathbb{D}^*_t$ be a \emph{polarised meromorphic degeneration} of Calabi-Yau (CY) manifolds over the punctured disc, namely a 1-parameter complex analytic family of $n$-dimensional projective CY manifolds $X\to \mathbb{D}^*$ defined over the field of convergent Laurent series, equipped with a relatively ample line bundle $L\to X$.  We assume there is a nowhere vanishing holomorphic volume form $\Omega\in H^0(X, K_X)$, which induces the fibrewise holomorphic volume form $\Omega_t$ by $\Omega=\Omega_t\wedge dt$.

	A basic question is to understand the limiting behaviour of the CY metrics $(X_t, \omega_{CY,t})$ in the class $c_1(L)$, as $t\to 0$. There is a sharp dichotomy depending on the dimension $m$ of the essential skeleton $Sk(X)$ attached to the polarised degeneration:
	
	\begin{itemize}
		\item (Algebraic case) If $m=0$, then the metrics $(X_t, \omega_{CY,t})$  are uniformly volume non-collapsing, and converge in the Gromov-Hausdorff sense to a singular CY metric on a CY variety with klt singularity \cite{DonaldsonSun}\cite{EGZ}. 
		
		\item (Transcendental case) If $m\geq 1$, then the CY metrics exhibit volume collapsing, and it is expected that the Gromov-Hausdorff limit has real dimension $m$. This case will be the focus of this paper. The special case $m=n$ is called the large complex structure limit, which is the context of the metric SYZ conjecture \cite{SYZ}.
	\end{itemize}

	The picture that emerged over the years, is that for $m\geq 1$, \emph{the metric limit should be captured by non-archimedean (NA) geometry}. In more detail, the meromorphic degeneration family induces by base change a family $X_K\to \text{Spec}(K)$ for $K=\C(\!(t)\!)$, and one can associate the Berkovich space $X_K^{an}$. NA pluripotential theory \cite{Boucksom} provides a unique up to scale semipositive metric $\norm{\cdot}_{CY,0}$ on the line bundle $L\to X_K^{an}$, whose NA Monge-Amp\`ere measure equals the Lebesgue measure supported on the essential skeleton $Sk(X)$ with total integral $(L^n)$. There is a hybrid topology on $X_K^{an}\cup \bigcup_{t\neq 0} X_t$ unifying the Berkovich space with the CY manifolds. Let $h_{CY,t}$ denote the Hermitian metric on $L\to X_t$ whose Chern curvature induce the CY metrics $(X_t, \omega_{CY,t})$ in the class $c_1(L)$. The general expectation is the following (\cf discussions after \cite[Thm C]{Boucksom1}):

	\begin{conj}\label{conj:hybridconvergence}
		Let $m\geq 1$. After suitable normalisation, the norm function $|\cdot |_{h_{CY,t}}^{\frac{1}{|\log |t||}}$ on $L\to X_t$ converges in the $C^0$-hybrid topology to $\norm{\cdot}_{CY,0}$ on $L\to X_K^{an}$ as $t\to 0$.
	\end{conj}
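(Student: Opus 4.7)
My approach is to isolate any $C^0$-hybrid accumulation point of $\frac{1}{|\log|t||}\log|\cdot|_{h_{CY,t}}$ as the unique solution of the non-archimedean Monge-Amp\`ere equation on $L\to X_K^{an}$, and then upgrade subsequential convergence to full $C^0$-hybrid convergence. Structurally this parallels the algebraic case $m=0$, but with Gromov--Hausdorff compactness replaced by NA pluripotential compactness in the sense of \cite{Boucksom}.

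First I would fix a continuous reference Hermitian metric $h_{ref}$ on $L$ extending across a semistable (or SNC) model $\mathcal{X}\to\mathbb{D}$, and write $h_{CY,t}=h_{ref}e^{-2\phi_t}$. The CY equation then becomes a family of complex Monge-Amp\`ere equations for $\phi_t$. A Kolodziej/EGZ-type estimate, adapted to the collapsing regime in which the reference volume degenerates at a rate controlled by $|\log|t||$, should give a uniform bound on $\phi_t/|\log|t||$ in $L^\infty$. Together with standard quasi-psh regularisation, this produces an equicontinuous family of rescaled potentials away from the central fibre, so in particular a uniformly bounded family of rescaled norm functions in the hybrid topology.

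Next I would extract, by NA pluripotential compactness, a subsequential hybrid limit $\phi_0$, which one checks is a bounded semipositive NA metric on $L\to X_K^{an}$. The crucial step is then to verify that $\phi_0$ solves $MA_{NA}(\phi_0)=\mathrm{Leb}_{Sk(X)}$ with total mass $(L^n)$: the total mass is automatic from the CY normalisation, while the concentration of $\omega_{CY,t}^n$ on tubular neighbourhoods of $Sk(X)$ is to be read off from the asymptotics of $|\Omega_t|^2$ encoded by the Kontsevich--Soibelman weight function and the assumption $m\geq 1$. Uniqueness of the NA CY potential then forces $\phi_0$ to be independent of the subsequence, so the full sequence converges in the hybrid topology.

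The main obstacle, to my mind, is upgrading weak (or capacity) convergence of potentials to genuine $C^0$-hybrid convergence: this requires a uniform modulus of continuity for $\phi_t/|\log|t||$ transverse to $Sk(X)$, which does not follow from $L^\infty$ bounds alone. In the algebro-geometric framework of this paper I would pursue it by building Fubini-Study-type comparison metrics from the canonical basis of $H^0(X,L^k)$: such metrics have directly computable hybrid limits in terms of the tropicalisation of the basis, and controlling their $C^0$-distance to $h_{CY,t}$ via a partial Bergman-kernel/Donaldson--Tian-type estimate --- sharpened to the collapsing setting and coupled to the optimal transport formulation mentioned in the abstract --- should give the required equicontinuity and hence the $C^0$-hybrid statement of the conjecture.
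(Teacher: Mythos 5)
Your plan correctly identifies the main ingredients the paper uses: the uniform $L^\infty$ bound from \cite{LiuniformSkoda}, a Fubini-Study/Bergman-kernel comparison via Ohsawa-Takegoshi extension, the Favre continuity principle for converting complex MA measures into NA MA measures, and the uniqueness of the NA CY metric to upgrade subsequential to full convergence. However, the crucial step --- constructing the comparison Fubini-Study metrics --- contains a genuine gap as you describe it. You propose to build them from ``the canonical basis of $H^0(X,L^k)$,'' but a globally valuative-independent basis is not known to exist for general polarised degenerations and is not part of the hypotheses of Conjecture~\ref{conj:hybridconvergence}. The paper's proof of Theorem~\ref{thm:hybridconvergence} is unconditional precisely because it replaces such a hypothetical global basis by a weaker substitute that always exists: locally valuative-independent bases $s_i^{(k)}$ on each connected component $O_k$ of $Sk(X)\setminus\mathcal{S}$ (Prop.~\ref{prop:localvaluativeindependence}), together with a finite spanning collection $S_1,\ldots,S_N$ (with $N$ possibly larger than $N(l)$) built from a filtration by valuative constraints (Section~\ref{sect:filtration}). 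The `from fibre to family' device, Cor.~\ref{cor:fromfibretofamily}, is then essential to promote the Ohsawa-Takegoshi section on one fibre $X_{t'}$ to a member of this fixed collection with uniformly controlled constants. This machinery is the actual content of the potential upper bound (Prop.~\ref{prop:potentialupperbound}) and is not recovered by appealing to a canonical basis.

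Two smaller inaccuracies: the logical order in your sketch is inverted --- you cannot first ``extract a $C^0$-hybrid limit by NA pluripotential compactness'' from the $L^\infty$ bound and then verify the NA MA equation, since the Favre continuity principle (Section~\ref{NAMAcxMA}) that equates the limiting MA measures presupposes the uniform Fubini-Study approximation~(\ref{uniformapproximation1}); the approximation must come first, and it is what simultaneously yields both the $C^0$-hybrid limit and the convergence of MA measures. Also, coupling to the optimal transport formulation is not used (and is not available unconditionally) in the proof of Theorem~\ref{thm:hybridconvergence}; it enters only in Section~\ref{sect:optimaltransport} under extra algebro-geometric assumptions that go beyond the hypotheses of the conjecture.
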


	We now summarize the main known approaches to study the limit of the CY metrics as $t\to 0$. Except for the few special cases (eg. the `small complex structure limit' \cite{SunZhang}\cite{HSVZ}, and the elliptic K3 example \cite{GrossWilson}) where one can perform a refined gluing construction, all the other methods rely on complex pluripotential theory, which applies in general, plus some additional information about the polarised degeneration family in question, which is method-specific.
	
	\begin{itemize}

		\item  In the case of the Fermat family of hypersurfaces \cite{LiFermat}\cite{Hultgren}, one crucially relies on the limiting description of K\"ahler potentials invariant under a large discrete symmetry group, in terms of \emph{tropical  combinatorics}. This approach is very explicit, but its main difficulty is that the combinatorics for most other examples are highly complicated.

		\item  In some situations one can formulate an \emph{optimal transport} problem, whose optimizer solves a real Monge-Amp\`ere type equation in favourable cases. Then one can show that this solution captures the $C^0$-limit of the CY potentials on $X_t$ as $t\to 0$. This approach is partially successful for certain families of toric Fano hypersurfaces \cite{LiFano}\cite{Hultgren}\cite{Hultgren2}\cite{Goto}, and for the `intermediate complex structure limit' examples \cite{Liintermediate}. In contrast, it is also known that in many toric Fano hypersurface examples, the na\"ive formulation of this optimal transport problem \emph{does not} produce the desired real Monge-Amp\`ere solution \cite{Hultgren2}, as optimal transport between polyhedral complexes is not always as well behaved as optimal transport between domains in Euclidean spaces. In this paper we will suggest that the cost function in the na\"ive formulation of the optimal transport problem is only the first approximation to the correct cost function.

		\item  In the case of large complex structure limit, one can conditionally prove enough about the limiting behaviour of the CY potential to deduce the metric SYZ conjecture, by assuming some conjectural `comparison property' about the solution of the \emph{non-archimedean Monge-Amp\`ere (NA MA) equation} \cite{LiNA}.  The known cases of this conjectural `comparison property' essentially coincides with the cases where either of the above two methods work.
		
	\end{itemize}

	Our first main result is

	\begin{thm}\label{thm:hybridconvergence}
		Conjecture \ref{conj:hybridconvergence} holds.
	\end{thm}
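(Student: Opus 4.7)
My strategy is to reduce the hybrid convergence of norms to uniform convergence of K\"ahler potentials, and then identify the limit via uniqueness of the non-archimedean Monge-Amp\`ere solution. After semistable reduction, fix a relatively ample SNC model $(\mathcal{X},\mathcal{L})$ of $(X,L)$ and a smooth reference Hermitian metric $h_0$ on $\mathcal{L}$ whose fibrewise curvature $\omega_0$ is positive and lies in $c_1(L)$. Writing $h_{CY,t}=h_0\, e^{-\varphi_t}$, the CY potentials $\varphi_t$ are $\omega_0$-psh on $X_t$ and solve a complex Monge-Amp\`ere equation whose right-hand side is a normalized multiple of $i^{n^2}\Omega_t\wedge\bar\Omega_t$. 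The theorem is equivalent to showing that, after centering $\tilde\varphi_t=\varphi_t-\sup_{X_t}\varphi_t$, the rescaled potentials $\tilde\varphi_t/|\log|t||$ converge in the $C^0$-hybrid topology to the NA CY potential $\varphi_{NA}$ underlying $\norm{\cdot}_{CY,0}$.

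The first step is a uniform $L^\infty$ bound on $\tilde\varphi_t/|\log|t||$. Guo-Phong-Tong (or suitably extended Kolodziej) $L^\infty$ estimates applied to the complex Monge-Amp\`ere equation, combined with Steenbrink's asymptotic analysis on the SNC model to control $\int_{X_t}i^{n^2}\Omega_t\wedge\bar\Omega_t$, produce this bound. Parallel to this, after rescaling, the complex volume forms converge in the hybrid topology, concentrating on tubular neighborhoods of the strata that correspond to the essential skeleton $Sk(X)$, with limiting measure proportional to Lebesgue measure on $Sk(X)$ of total mass $(L^n)$. These asymptotics are essentially of algebro-geometric nature and can be extracted from the explicit combinatorics of the central fibre.

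The main obstacle, which I expect to occupy the bulk of the argument, is the identification of any subsequential $C^0$-hybrid limit $\varphi_\infty$ of $\tilde\varphi_t/|\log|t||$ with $\varphi_{NA}$. Boucksom-Jonsson's framework guarantees that $\varphi_\infty$ is a bounded $L$-psh metric on $X_K^{an}$. To pin it down, I compute its NA Monge-Amp\`ere measure as the hybrid-weak limit of the complex Monge-Amp\`ere measures $(\omega_0+dd^c\varphi_t)^n$, which by the previous step converges to Lebesgue measure on $Sk(X)$ of mass $(L^n)$. The crux here is continuity of the Monge-Amp\`ere operator along the hybrid degeneration on bounded psh metrics; I would establish it via envelope regularization on the NA side and careful stratum-by-stratum mass balancing on the SNC model on the complex side, the hardest point being that the collapsing regime $1 \le m < n$ requires one to match contributions from strata of varying dimensions. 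Uniqueness of the NA CY potential then forces $\varphi_\infty=\varphi_{NA}$ up to an additive constant, upgrading subsequential to full convergence. Transferring the potential convergence back to norm convergence on $L\to X_t$ through $h_{CY,t}=h_0\, e^{-\varphi_t}$ and continuity of $h_0$ in the hybrid topology on $\mathcal{L}$ completes the deduction of Theorem \ref{thm:hybridconvergence}.
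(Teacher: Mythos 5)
Your high-level skeleton --- $L^\infty$ bound on the normalized potentials, hybrid weak convergence of the volume forms, and identification of any subsequential limit via uniqueness of the NA Monge--Amp\`ere solution --- matches the paper's framework, and the first two ingredients are handled in the paper along similar lines (the $L^\infty$ bound via the author's uniform Skoda inequality, the volume-form convergence via Boucksom--Jonsson). The step you yourself identify as the crux, however, is where the proposal has a genuine gap. The continuity of the complex Monge--Amp\`ere measures toward the NA Monge--Amp\`ere measure in the hybrid topology, as established by Favre and used in Section~\ref{NAMAcxMA}, is \emph{not} available for arbitrary bounded (semi)positive metrics; it requires that the family of potentials $\phi_t$ admit a \emph{uniform Fubini--Study approximation} $\sup_{X_t}|\phi_{l,t}-\phi_t|\le \epsilon_l|\log|t||$ with $\epsilon_l\to 0$, where the $\phi_{l,t}$ are built from a fixed finite family of sections. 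Your proposed route of ``envelope regularization on the NA side and stratum-by-stratum mass balancing on the SNC model'' does not supply this uniform approximability, and without it the passage from the complex MA measure of $\phi_{CY,t}$ to the NA MA measure of its hybrid limit does not go through.

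The bulk of the paper's Section~\ref{sect:potentialconvergence} is devoted precisely to producing this uniform Fubini--Study approximation for $\phi_{CY,t}$, by a mechanism your proposal does not engage with. The three interacting ingredients are: (i) an Ohsawa--Takegoshi/Bergman-kernel estimate (Prop.~\ref{OhsawaTakegoshi}) producing, at each point $z\in X_t$, a section $s\in H^0(X_t,lL)$ whose weighted $L^2$-norm is comparable to its pointwise value at $z$ up to error $O(|\log|t||/l)$; (ii) an $L^2$-almost-orthogonality estimate on good dyadic scales (Lemma~\ref{lem:asymptoticorthogonality1}, Cor.~\ref{cor:asymptoticorthogonality2}), exploiting the valuative-independent local bases $s_i^{(k)}$ from Prop.~\ref{prop:localvaluativeindependence} and the $T^m$-orthogonality of Fourier modes, which converts the Bergman bound into a system of valuative constraints on the coefficients $a_i^k$ of $s$; and (iii) the ``from fibre to family'' construction (Cor.~\ref{cor:fromfibretofamily}), which replaces $s$ by $cS_\alpha$ from a fixed finite collection $\{S_\alpha\}$ built from the filtration by valuative constraints (Section~\ref{sect:filtration}), preserving both the valuative constraints and the point evaluation. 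These yield the two-sided estimate $|\phi_{CY,t}-\phi_{l,t}|\le C|\log|t||/l$ (Cor.~\ref{cor:uniformFS}) with $\phi_{l,t}$ a Fubini--Study potential over the finite family $\{S_\alpha\}$, which is exactly what Favre's continuity principle needs. Because your proposal omits all of this machinery, the central step of your argument --- identifying the NA MA measure of the subsequential limit --- would fail as written.
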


	\begin{rmk}
		Theorem \ref{thm:hybridconvergence} is a direct generalisation of \cite[Theorem 1.1]{Liintermediate}, which concerns the special example of the `intermediate complex structure limit'. The proof of \cite[Theorem 1.1]{Liintermediate} relies on the fact that the essential skeleton $Sk(X)$ is a \emph{simplex}, which is a domain in the Euclidean space, so one can solve a suitable optimal transport problem classically, and use the solution to produce a generalised Calabi ansatz metric on $X_t$ which is approximately Calabi-Yau. This setting precludes the large complex structure limit case, where $Sk(X)$ is known to be a pseudomanifold with the same rational homology as the $n$-sphere, when $X_t$ is  strict Calabi-Yau \cite{NicaiseXu}. 
		In our more general setting we allow for the large complex structure limit.
	\end{rmk}

	Our second main goal is to find a more \emph{concrete} description for the NA CY metric $\norm{\cdot}_{CY,0}$ on the Berkovich space, in the case of the large complex structure limit, under some additional algebro-geometric assumptions
	inspired by the theta functions in the Gross-Siebert programme.
	The key property is


	\begin{Def}\label{Def:valuativeindependence}
		For every $l\geq 1$, let $N(l)= \dim H^0(X_t, lL)$, and let  $\theta_1^l ,\ldots \theta^l_{N(l)}$ be holomorphic sections of $L^{\otimes l}\to X$, which are meromorphic at $0\in \mathbb{D}_t$. We say that $\theta_1^l,\ldots \theta^l_{N(l)}$ satisfy the \emph{valuative independence condition}, if for every $x\in Sk(X)$, and every coefficient function $a_i(t)\in K=\C(\!(t)\!)$, we have 
		\[
		val_x(\sum_i a_i \theta^l_i)= \min_{i: a_i\neq 0}  (val(a_i) + val_x(\theta^l_i) ).
		\]

	\end{Def}

	\begin{rmk}(\cf section \ref{sect:canonicalbasis})
		The valuative independence condition implies that the sections $\theta_1^l,\ldots \theta_{N(l)}^l$ are linearly independent over the field $K$. In particular, for small $t\in \mathbb{D}^*$, the restrictions of $\theta_i^l$ for $i=1,\ldots ,N(l)$ are linearly independent in $H^0(X_t, lL)$, hence form a distinguished \emph{basis} of this space of sections.

		For comparison, in Gross-Siebert mirror symmetry, the mirror family of Calabi-Yau manifold carries a natural polarisation line bundle, and the space of sections for tensor powers of this line bundle carries a \emph{canonical basis} known as \emph{theta functions}. We think it is an important question \emph{whether the theta functions satisfy the valuative independence condition}. 
	\end{rmk}

	As the statement is long, we  only summarize the main conceptual features here:
	
	\begin{itemize}
		
		\item We will assume that there is a distinguished basis 
		of sections $\theta_i^l$ satisfying the valuative independence condition. We assume $\theta_i^l$ are parametrised by suitable rational points on some integral polyhedral complex $B$, and they satisfy some ring structure properties (See Section \ref{sect:assumptions}). (This is motivated by Gross-Siebert mirror symmetry, where $B$ is the essential skeleton of a polarised degeneration of CY manifolds mirror to $X$.)

		\item  The NA CY metric $\norm{\cdot}_{CY,0}$ on the Berkovich space $X_K^{an}$ is determined by its restriction to the essential skeleton $Sk(X)\subset X_K^{an}$. (This is a direct consequence of the domination principle \cite[Lemma 8.4]{Boucksom}, and the fact that the NA MA measure is supported on $Sk(X)$.)

		\item The limiting behaviour for the valuation of the basis sections $\theta_i^l$ determines a \emph{cost function} $c(x,p)$ on $Sk(X)\times B$.

		\item  The potential for the NA CY metric restricted to $Sk(X)$,  is the minimiser for the \emph{Kontorovich dual formulation of an optimal transport problem} between $Sk(X)$ and $B$. 
	\end{itemize}

	The method here works also for some polarized degenerations with $1\leq m=\dim Sk(X)<n$. In the case of the intermediate structure limit examples as in \cite{Liintermediate}, we will give a new proof for why the potential function of  $\norm{\cdot}_{CY,0}$ agrees with the optimizer of an optimal transport problem.

	\textbf{Organization}. In Section \ref{sect:recapNA} we briefly recall some basic facts about non-archimedean pluripotential theory, with some emphasis on the hybrid topology convergence result of Favre in Section \ref{NAMAcxMA}, and the formula for the relative Monge-Amp\`ere energy by Boucksom-Eriksson in Section \ref{sect:relativevolume}. Section \ref{sect:canonicalbasis} explains the valuative independence condition in Def. \ref{Def:valuativeindependence} and a more localized variant \ref{def:valuativeindependenceatx},  exhibits a number of examples, and discusses some motivation from theta functions in mirror symmetry.  Section \ref{sect:potentialconvergence} proves the hybrid convergence Thm. \ref{thm:hybridconvergence} by adapting the Bergman kernel estimate technique in the intermediate complex structure case \cite{Liintermediate}, along with some new ingredients. It turns out that a weaker substitute of a valuative independent basis always exists, and is already sufficient for Thm. \ref{thm:hybridconvergence}.
	Section \ref{sect:optimaltransport} shows under a number of extra assumptions that the NA CY potential is equal to the unique minimiser of the Kontorovich functional for some optimal transport problem. This is inspired by the Chebyshev transform of Witt-Nystr\"om \cite{WittNystrom}, and builds on Boucksom-Eriksson's formula for the relative Monge-Amp\`ere energy. Some connections to the literature are discussed in Section \ref{sect:discussion}.

	\begin{Acknowledgement}
		The author is sponsored by the Royal Society University Research Fellowship. He thanks Mark Gross, Pierrick Bousseau and Travis Mandel for discussions on theta functions in mirror symmetry.
	\end{Acknowledgement}

	\section{Recap: non-archimedean geometry}\label{sect:recapNA}

	We recall a few basic notions from non-archimedean geometry, based on the work of Boucksom et al. \cite{Boucksom}\cite{Boucksom1}\cite{Boucksomnew1}\cite{Boucksomsemipositive}\cite{Boucksomsurvey}; see also \cite[section 5]{Boucksom1}\cite[section 2]{Liintermediate}.

	\subsection{Models and dual complex}

	We denote $R=\C[\![ t ]\!]$, and $K=\C(\!(t)\!)$ with the standard discrete valuation. Given a polarised degeneration family  $X\to S\setminus \{ 0\}$, we let $X_K$ be a smooth, connected, projective variety $X_K$ of dimension $n$ over the formal punctured disc $\text{Spec}(K)$. Let $X_K^{an}$ denote the  \emph{Berkovich space}. Given an affine variety over $K$, the Berkovich space is the space of semivaluations on the ring of functions extending the standard discrete valuation on $K$, and in general $X_K^{an}$ is obtained by gluing the affine pieces.	The semivaluations $v$ are equivalently thought of as multiplicative seminorms $|\cdot |=e^{-v}$, satisfying the ultrametric property
	\[
	|f+g|\leq \max\{ |f|+|g|  \}, \quad |fg|=|f||g|.
	\]

	A \emph{model} of $X_K$ is a normal projective scheme $\mathcal{X}$, flat and of finite type over $\text{Spec} (R)  $, together with an identification of the generic fibre of the morphism $\mathcal{X}\to\text{Spec} (R)  $ with $X_K$.  For any model $\mathcal{X}$ and every irreducible component $E$ of $\mathcal{X}_0$, we write the central fibre as $\mathcal{X}_0=\sum b_iE_i$, and $E_J= \cap_{i\in J}E_i$. We say $\mathcal{X}$ is SNC if $\mathcal{X}$ is regular and $\mathcal{X}_0$ has SNC support, and an SNC model is semistable if all the divisors components on $\mathcal{X}_0$ have multiplicity one. By Hironaka resolution and semistable reduction, up to finite base change of $S$, we shall without loss assume that there exists some semistable SNC model over $S$. A model of the line bundle $L\to X_K$ is a $\Q$-line bundle $\mathcal{L}$ on a proper model $\mathcal{X}$, together with an identification $\mathcal{L}|_{X_K}=L$.

	The \emph{dual complex} $\Delta_{\mathcal{X}}$ of an SNC model $\mathcal{X}$ is the simplicial complex whose vertices correspond to the irreducible components $E_i$ of the central fibre, and whose faces correspond to the connected components of nonempty intersection strata $E_J$, and are identified as the simplex 
	$\Delta_J\simeq \{  x\in \R_{\geq 0}^{|J|}| \sum_{i\in J} b_ix_i=1 \}.
	$

	Given an SNC model,
	there is a natural \emph{embedding map} of the dual complex into $X_K^{an}$, which is a homeomorphism onto its image. Given $x\in \Delta_J$, we need to associate a quasi-monomial valuation $val_x$, regarded as a point in $X_K^{an}$. Given any local function $f\in \mathcal{O}_{\mathcal{X},\eta}$ where $\eta$ is the generic point of $E_J$, we can Taylor expand
	\[
	f=\sum_{\alpha\in \N^{|J|}} f_\alpha z_0^{\alpha_0}\ldots z_{|J|}^{\alpha_{|J|}},
	\] 
	where $z_i$ are local equations of the divisors $E_i$, and $f_\alpha$ is either a unit or zero. Then $val_x$ is defined as the minimal weighted vanishing order:
	\[
	val_x(f)= \min\{   \langle x, \alpha\rangle  |  f_\alpha\neq 0  \}.
	\]
	In particular, the vertices of $\Delta_J$ map to the divisorial valuations. The requirement $\sum b_ix_i=1$ comes from $val(t)=1$, where $t\in R$ is the uniformizer. Henceforth we identify the dual complex with its image in $X_K^{an}$.

	\begin{rmk}
		For many purposes, SNC models can be replaced by dlt (divisorially log terminal) models, which arises naturally in the minimal model program \cite{NicaiseXu}. The dual complex of a dlt model is simply defined as the dual complex of its SNC locus. We will only make use of dlt models in the intermediate complex structure example as in \cite{Liintermediate}. 
	\end{rmk}

	Moreover, there is a natural topology on $X_K^{an}\cup X$, known as the \emph{hybrid topology}, which describes $X_K^{an}$ as a limiting object of the complex manifolds $X_t$ \cite[Appendix]{Boucksom1}\cite[Section 1.2]{PilleSchneider}; a one-page recap is in \cite[section 2.2]{Liintermediate}. We will mention some key facts below.

	\subsection{Essential skeleton and volume form asymptote}\label{section:essentialskeleton}

	Suppose $X$ is a meromorphic degeneration family of smooth projective CY manifolds $X_t$ over some punctured disc, and $X$ is equipped with a holomorphic volume form $\Omega$, which induces the holomorphic volume forms $\Omega_t$ on the $X_t$ via $\Omega= \Omega_t\wedge dt$. The \emph{essential skeleton} is the subset of $X_K^{an}$ where the log discrepancy function takes the minimal value $\kappa_0$ \cite[Section 3.1.1]{NicaiseXu}.  Up to passing to semistable reduction and multiplying $\Omega$ by a suitable power of $t$, we may assume $\kappa_0=0$.

	We follow \cite{Boucksom1} to sketch the computation of the CY volume form asymptote. Let $\mathcal{X}$ be an SNC model of the degeneration family, whose dual complex is $\Delta_{\mathcal{X}}$. We write the central fibre as $\mathcal{X}_0=\sum b_iE_i$, and the relative log canonical divisor as $\sum a_i E_i$ (so the log discrepancy function takes the value $\frac{a_i}{b_i}$ at the divisorial point corresponding to $E_i$). We consider an intersection stratum $E_J= \cap_{i\in J} E_i$, and let $z_0,\ldots z_n$ be local coordinates on $E_J$ bounded away from the deeper intersection strata, such that $z_0,\ldots z_k$ are the local defining equation for the $E_i$ with $i\in J$. Locally
	\[
	\Omega=f z_0^{a_0+b_0-1}\ldots z_k^{a_k+b_k-1} dz_0\wedge \ldots  dz_n,
	\]
	and we can arrange
	$
	t= z_0^{b_0}\ldots z_k^{b_k} .
	$
	Then
	\begin{equation}\label{holovollocal}
		\Omega_t= b_0^{-1} f z_0^{a_0}\ldots z_k^{a_k} d\log z_1\wedge \ldots d\log z_k\wedge dz_{k+1}\wedge \ldots dz_n.
	\end{equation}
	By the non-negativity of the log discrepancy function, we have $a_i\geq 0$ for $i=0,\ldots k$.

	If at least one exponent $a_i>0$, then the local contribution to the volume integral $\int_{X_t} \Omega_t\wedge \overline{\Omega}_t$ is suppressed by a factor $O(\frac{1}{|\log |t||})$. The divisors with $a_i=0$ correspond to the vertices of a subcomplex of $\Delta_{\mathcal{X}}$, which is naturally identified with the essential skeleton $Sk(X)\subset X_K^{an}$ under the embedding map.  The dimension $m=\dim Sk(X)$ then equals the largest value of $k$ such that $a_0=\ldots =a_k=0$, so the total volume integral $\int_{X_t} \Omega_t\wedge \overline{\Omega}_t=O(|\log |t||^m)$. We denote the normalized CY measure as
	\[
	\mu_t= \frac{  \Omega_t\wedge \overline{\Omega}_t }{   \int_{X_t} \Omega_t\wedge \overline{\Omega}_t      } ,
	\]
	which is a probability measure on $X_t$.
	Boucksom-Jonsson \cite{Boucksom} show that

	\begin{thm}\label{Volumeconvergence}
		Under the hybrid topology, the measures $\mu_t$ on $X_t$ converge weakly to a probability measure $\mu_0$ supported on $Sk(X)\subset X_K^{an}$, which puts no measure on the lower dimensional faces of $Sk(X)$, and agrees with a suitably normalized Lebesgue measure $\mu_0$ on the $m$-dimensional faces of $Sk(X)$.
		More concretely, for any continuous function $f$ on the hybrid space $X_K^{an}\cup X$, we have
		\[
		\int_{X_t} fd\mu_t\to \int_{X_K^{an}} fd\mu_0.
		\]
	\end{thm}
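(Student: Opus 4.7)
The plan is to combine a partition-of-unity argument on an SNC model with an explicit change of variables into tropical coordinates, then identify the weak limit through the retraction map to the dual complex. I would fix a semistable SNC model $\mathcal{X}$, cover it by charts adapted to the stratification in which formula \eqref{holovollocal} holds, and pick a smooth partition of unity $\{\chi_\alpha\}$ on $\mathcal{X}$; restricting to $X_t$ reduces the theorem to the asymptotic analysis of each local piece $\int_{X_t} f\chi_\alpha\,\Omega_t\wedge\overline{\Omega}_t$ as $t\to 0$, where $f$ is the continuous test function on the hybrid space.

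For a chart sitting over an intersection stratum $E_J$ of dimension $n-k$, I would introduce tropical coordinates $x_i:=\log|z_i|/\log|t|$ for $i=0,\dots,k$ (subject to $\sum b_i x_i=1$), parametrize the fibre of $t=z_0^{b_0}\cdots z_k^{b_k}$ by $(x,\arg z_i, z_{k+1},\dots,z_n)$, and substitute into \eqref{holovollocal}. Two features then emerge: the Jacobian produces a factor $|\log|t||^k$ coming from $d\log|z_i|=\log|t|\,dx_i$, and the factor $\prod|z_i|^{2a_i}$ becomes $\exp\!\bigl(2\log|t|\sum a_i x_i\bigr)$, which decays exponentially in $|\log|t||$ on compacts of $\Delta_J^\circ$ whenever some $a_i>0$. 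Strata with positive log discrepancy are therefore negligible, and among the remaining strata only those with $k=m$ contribute at the leading order $|\log|t||^m$. On each such top-dimensional face of $Sk(X)$, integrating out the residual torus angles and the transversal coordinates along $E_J^\circ$ leaves a smooth positive density in $x$ on $\Delta_J^\circ$, which after normalization by the total mass $\int_{X_t}\Omega_t\wedge\overline{\Omega}_t\sim C\,|\log|t||^m$ becomes a fixed Lebesgue-type probability density.

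To upgrade this local analysis to hybrid weak convergence against $f$, I would use the fact that the hybrid topology is engineered so that $f|_{X_t}$, pulled back to the chart, converges locally uniformly on compacts of $\Delta_J^\circ$ to $f|_{Sk(X)}\circ r$, where $r$ is the retraction onto $\Delta_J$ and the tropicalization map plays the role of its complex-analytic counterpart. Summing the partition contributions and passing to the limit then yields the stated formula, with $\mu_0$ supported on the top-dimensional faces of $Sk(X)$ and agreeing there with a suitably normalized Lebesgue measure; since each $\mu_t$ is a probability measure and no mass escapes, the limit is automatically a probability measure. The main technical obstacle is (i) to prove that the angular and transversal integrations yield a \emph{uniform} smooth density in $x$ on interior compacts of $\Delta_J$, with $t$-independent remainder bounds that commute with the weak limit; this requires a careful description of the torus fibres of the tropicalization inside $X_t$ together with uniform control of the holomorphic density appearing in \eqref{holovollocal} on $E_J^\circ$. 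A secondary, easier obstacle is (ii) to rule out concentration of mass on $\partial\Delta_J$ and on lower-dimensional faces of $Sk(X)$, handled by the dimension bookkeeping $|\log|t||^k=o(|\log|t||^m)$ for $k<m$ combined with the exponential suppression on positive-log-discrepancy strata.
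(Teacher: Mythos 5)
The paper does not actually prove Theorem~\ref{Volumeconvergence}; it is quoted from Boucksom--Jonsson, and Section~\ref{section:essentialskeleton} only sketches the underlying volume asymptote: the local expansion~(\ref{holovollocal}), the exponential suppression of strata where some $a_i>0$, the $|\log|t||^{k}$ factor from the logarithmic change of variables, and hence the leading-order $|\log|t||^{m}$ contribution from the $m$-dimensional faces of $Sk(X)$. Your proposal reconstructs exactly that strategy, which is also the route taken in the cited reference, and you correctly flag the two remaining technical points (uniform density estimates and absence of escaping mass). The one claim worth stating more sharply is why the limiting density on each $m$-face is actually \emph{constant} in $x$ rather than just smooth and positive: for $x$ in a compact of the open face, $|z_i|=|t|^{x_i}\to 0$ uniformly, so the holomorphic density $f$ in~(\ref{holovollocal}) is asymptotically replaced by its restriction to $E_J$, i.e.\ the Poincar\'e residue $\Omega_{E_J}$; the angular and transversal integrations at fixed $x$ then produce $\int_{E_J}\Omega_{E_J}\wedge\overline{\Omega}_{E_J}$, which is independent of $x$, yielding precisely formula~(\ref{Lebesguemeasure}).
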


	The normalisation constants can be pinned down by a more invariant interpretation of the local computation (\ref{holovollocal}). Let $E_J=\cap_0^m E_i$ correspond to an $m$-dimensional face $\Delta_J\subset Sk(X)$. The formula (\ref{holovollocal}) can be written as
	\[
	\Omega_t= \Omega_{E_J}\wedge d\log z_1\wedge\ldots d\log z_m,
	\]
	where $\Omega_{E_J}$ extends holomorphically over $E_J$ (since all $a_i=0$ here), and defines a holomorphic volume form on $E_J$ called the Poincar\'e residue, which is independent of the choice of local coordinates. The integral
	$
	\int_{E_J} \Omega_{E_J}\wedge \overline{\Omega}_{E_J}
	$
	is finite, and 
	\begin{equation}\label{Lebesguemeasure}
		\mu_0= (C_0 \int_{E_J} \Omega_{E_J}\wedge \overline{\Omega}_{E_J}) |dx_1\ldots dx_m|
	\end{equation}
	as a measure on the face $\Delta_J\simeq \{  x\in \R_{\geq 0}^{m+1}|\sum_0^m b_i x_i=1  \}$, 
	where the constant $C_0$ is independent of the face, and only serves to normalise $\mu_0$ to a probability measure.

	\subsection{Semipositive metrics, NA Calabi conjecture}

	A \emph{continuous metric} $\norm{\cdot}$ on the line bundle $L\to X_K^{an}$ associates to local sections $s$ a continuous function $\norm{s}$, compatible with the restriction of sections, and such that $\norm{fs}=|f|\norm{s}$ for local functions $f$. A choice of model line bundle $\mathcal{L}\to \mathcal{X}$ induces a `model metric' on $L$, with the property that the locally invertible sections of $\mathcal{L}$ have norm one. We can then write any other continuous metric in terms of a potential function, via $\norm{\cdot}=\norm{\cdot}_{\mathcal{L}}e^{-\phi}$, and it is conventional to identify the metric with the potential.

	Let $L$ be a relatively ample line bundle. Given a NA norm on the $K$-vector space $V=H^0(X_K,lL)$ which generates the line bundle $L^{\otimes l}\to X$, the \emph{NA Fubini-Study metric} on the line bundle $L\to X_K^{an}$ is defined by
	\[
	\norm{s}_{FS,l}(x)=  \inf_{ \tilde{s}\in V, \tilde{s}(x)=s^{\otimes l}(x)} \norm{ \tilde{s}}_V^{1/l}, \quad \forall x\in X_K^{an}.
	\]
	Concretely, given an orthogonal $K$-basis $s_1, \ldots s_N$ for $V$, namely that
	\[
	\norm{ a_1s_1+ \ldots+ a_N s_N}_V=\max\{ |a_1|\norm{s_1}_V,\ldots ,|a_N|  \norm{s_N}_V  \}, \quad \forall a_i\in K,
	\]
	and writing  $\norm{s_j}_V= e^{c_j} $ for $c_j\in \R$,   then
	\begin{equation}\label{FubiniStudyNA}
		\norm{s}_{FS ,l}(x)= \frac{ |s(x)|}{ \max_j \{  |s_j(x)|e^{-c_j}  \}^{1/l}   }, \quad \forall x\in X^{an}.
	\end{equation}
	This can be recast in terms of potentials
	\begin{equation}\label{NAFubiniStudy}
		\phi_{FS,l}= \frac{1}{l} \max_{1\leq i\leq N}(\log |s_i| -c_i),
	\end{equation}
	where to evaluate $|s_i|$, we implicitly use a local trivializing section of $L$ coming from the choice of a model line bundle. In general, the NA Fubini-Study metrics can be defined as long as $s_1,\ldots, s_N$ generate $L^{\otimes l}\to X_K$; it is not necessary for these to form a basis of $V$.

	A continuous \emph{semipositive metric} $\norm{\cdot}=\norm{\cdot}_{\mathcal{L}}e^{-\phi}$ on $L$ can be then defined as a uniform limit of some sequence of NA Fubini-Study metrics. The space of continuous semi-positive potentials is denoted as $\text{CPSH}(X_K^{an}, \mathcal{L})$. We record a useful lemma for later convenience:
	
	\begin{lem}\label{lem:Lip}
		(Uniform Lipschitz estimate)
		Let $l\geq 1$, and let $L\to X$ be a relatively ample line bundle. Given a model $\mathcal{L}\to \mathcal{X}$, the valuation $x\mapsto val_x(s)$ for any nonzero $s\in H^0(X_K^{an}, lL)$ makes sense as a function on the dual complex $\Delta_{\mathcal{X}}$ using the local trivialisation of $\mathcal{L}$. On each face of $\Delta_{\mathcal{X}}$, we have a uniform estimate
		\[
		l^{-1}|val_x(s)- val_{x'}(s) | \leq C|x-x'|, \quad \forall x,x'\in \Delta_J,
		\]
		where the constant is independent of $x,x',s,l$.
	\end{lem}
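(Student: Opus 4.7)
The plan is to express $val_x(s)$ locally as a concave piecewise-linear function of $x$ on $\Delta_J$ via Taylor expansion, and then bound its slopes by exploiting the relative ampleness of $\mathcal{L}$. Fix a generic interior point of the stratum $E_J=\bigcap_{i\in J}E_i$ (with $|J|=k+1$) and choose local coordinates $z_0,\ldots,z_n$ on $\mathcal{X}$ near this point such that $z_0,\ldots,z_k$ cut out $E_0,\ldots,E_k$, together with a local frame $e$ of $\mathcal{L}$. Writing $s=f\cdot e^{\otimes l}$ locally and Laurent-expanding
\[
f=\sum_\alpha f_\alpha(z_{k+1},\ldots,z_n)\, z_0^{\alpha_0}\cdots z_k^{\alpha_k},
\]
the defining property of the quasi-monomial valuation gives $val_x(s)=\min\{\langle x,\alpha\rangle : f_\alpha\not\equiv 0\}$ for $x\in\Delta_J$ (no extra linear correction appears since $e$ is a unit near $E_J$).

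This expression is manifestly a concave piecewise-linear function of $x$ on $\Delta_J$, with linear pieces of the form $val_x(s)=\langle x,\alpha^\ast\rangle$ for some achieving exponent $\alpha^\ast\in\mathrm{supp}(f)$. Hence the Lipschitz constant of $val_x(s)$ on $\Delta_J$ is controlled by $\max|\alpha^\ast|$, taken over all achieving exponents that arise as $x$ ranges over the face.

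The crux is the uniform bound $|\alpha^\ast|\leq Cl$ with $C=C(\mathcal{X},\mathcal{L})$ independent of $s$ and $l$. Achieving exponents are componentwise-minimal elements of $\mathrm{supp}(f)$: the coordinate $\alpha^\ast_i$ equals the order of vanishing of $s$ along $E_i$ (relative to $e^{\otimes l}$) when $x$ is the corresponding vertex of $\Delta_J$, and more generally at interior achievers it is controlled by vanishing orders of iterated restrictions of $s$ to substrata of $E_J$. Since $\mathcal{L}$ and all its restrictions $\mathcal{L}|_{E_{J'}}$ to strata are relatively ample, each such divisorial valuation applied to a nonzero section of $lL$ grows at most linearly in $l$, with constant depending only on the model --- a standard consequence of ampleness, equivalent to the existence of a bounded Newton--Okounkov body along the flag $E_J\supset E_{J\setminus\{0\}}\supset\cdots$, or to Seshadri-type inequalities. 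Combining this with the previous paragraph and dividing by $l$ yields the claimed Lipschitz estimate on each face.

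The main obstacle is precisely the uniform $Cl$-bound on achieving exponents, which is where relative ampleness of $\mathcal{L}$ enters essentially; once it is granted, the reduction via the local Taylor expansion and piecewise-linear concavity is formal.
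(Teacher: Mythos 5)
Your approach and the paper's diverge after the shared first observation (that $val_x(s)$ is concave piecewise affine on each face, with slopes given by achieving exponents of the Taylor support). You try to bound the achieving exponents directly via a Newton--Okounkov-type argument; the paper instead repackages $-l^{-1}val_x(s)$ as the restriction to the dual complex of a genuine Fubini--Study potential on $(X_K^{an},L)$ and then invokes the Boucksom--Favre--Jonsson equicontinuity theorem for $\text{CPSH}(X_K^{an},L)$, which already carries a uniform Lipschitz constant independent of the potential. Concretely, the paper picks auxiliary sections $s_1,\ldots,s_N\in H^0(X_K^{an},lkL)$ with no common zero and sets $\phi=(kl)^{-1}\max(\log|s|^k,\max_i(\log|s_i|-c_i))$; for $c_i$ large this equals $l^{-1}\log|s|$ on $\Delta_{\mathcal{X}}$, yet $\phi\in\text{CPSH}$ whatever the $c_i$, so the Lipschitz constant from [Boucksomsemipositive, Thm.\ 6.1] applies.

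There are two genuine gaps in your sketch. First, the reduction ``at interior achievers it is controlled by vanishing orders of iterated restrictions of $s$ to substrata'' is asserted, not proved. On a $1$-dimensional face the slopes of a concave piecewise-linear function are monotone and hence pinched between the endpoint (flag) slopes, but on a higher-dimensional face $\Delta_J$ it is not clear that the gradient at an interior point lies in the convex hull of the flag-minimal exponents; a minimal element of the Taylor support need not be lexicographically minimal for any permutation of the flag, and ruling out such achievers with large slope would require an additional argument. Second, your bound on the flag valuations appeals to relative ampleness of $\mathcal{L}$ and of its restrictions $\mathcal{L}|_{E_{J'}}$ to all strata, but the hypothesis of the lemma is only that $L\to X_K$ is relatively ample; the model line bundle $\mathcal{L}$ on $\mathcal{X}$ (and its restrictions to the strata of $\mathcal{X}_0$) need not be ample, so the Newton--Okounkov boundedness you want is not available in the stated generality. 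The BFJ equicontinuity route circumvents both problems, since it makes no positivity assumption on the model line bundle and packages the combinatorics internally.
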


	\begin{proof}
		The valuation function is piecewise affine linear on $\Delta_{\mathcal{X}}$, hence Lipschitz continuous, and the main question is the uniform estimate. We can find auxiliary sections $s_1,\ldots s_N$ in $H^0(X_K^{an}, lkL)$ for some large power $k$, so that $s_1,\ldots s_N$ have no common zero locus. Then 
		\[
		\phi= (kl)^{-1} \max( \log |s|^k , \max_i( \log |s_i|- c_i) ) 
		\]
		is a Fubini-Study potential, hence lies in $CPSH(X_K^{an}, L)$, hence by the equicontinuity result of Boucksom-Favre-Jonsson \cite[Thm. 6.1]{Boucksomsemipositive}, on any face of $\Delta_{\mathcal{X}}$, we have a uniform Lipschitz estimate $|\phi(x)- \phi(x') | \leq C|x-x'|$. However, by choosing $c_i\in \R$ large enough, then $\phi(x)= l^{-1}\log |s|(x)=- l^{-1}val_x(s)$ holds on $\Delta_{\mathcal{X}}$, hence the result. 
	\end{proof}

	To CPSH potentials we can associate the NA Monge-Amp\`ere measure $MA(\norm{\cdot})$, with total integral $(L^n)$. The central result of NA pluripotential theory is the solution of the NA Calabi conjecture:
	
	\begin{thm}\label{thm:BFJ}
		\cite{Boucksomsurvey}\cite{Boucksom} Let $L\to X$ be an ample line bundle. There is a unique (up to an additive constant) continuous semipositive metric $\norm{\cdot}_{CY,0}$ on $L$ such that $MA(  \norm{\cdot}_{CY,0} )=(L^n)\mu_0$, where $\mu_0$ is the normalised Lebesgue measure on $Sk(X)\subset X_K^{an}$. 
	\end{thm}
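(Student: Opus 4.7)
The plan is to follow the variational approach of Boucksom–Favre–Jonsson in non-archimedean pluripotential theory, which is the direct analogue of the Berman–Boucksom–Guedj–Zeriahi solution of the complex Calabi conjecture on singular varieties.

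\emph{Uniqueness.} Suppose $\phi_1,\phi_2\in \mathrm{CPSH}(X_K^{an},L)$ are two continuous semipositive potentials with the same NA MA measure $(L^n)\mu_0$. The domination principle (cited above as \cite[Lemma 8.4]{Boucksom}) asserts that for two such potentials, $MA(\phi_1)(\{\phi_1<\phi_2\})\geq MA(\phi_2)(\{\phi_1<\phi_2\})$ forces the set $\{\phi_1<\phi_2\}$ to have $MA(\phi_2)$-measure zero unless $\phi_1-\phi_2$ attains its sup, which together with the symmetric statement yields $\phi_1-\phi_2\equiv\mathrm{const}$.

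\emph{Existence via the variational method.} Introduce the Monge–Ampère energy $E:\mathrm{CPSH}(X_K^{an},L)\to\R$, a concave increasing functional whose differential at a smooth metric is $MA(\phi)/(L^n)$, and extend it by monotone limits to the finite-energy class $\mathcal{E}^1(L)$. Consider the Mabuchi-type functional
\[
F_{\mu_0}(\phi)\;=\;E(\phi)\;-\;(L^n)\int_{X_K^{an}}\phi\,d\mu_0,
\]
normalised by $\sup\phi=0$. Its Euler–Lagrange equation is exactly the desired NA MA equation. To produce a maximiser one checks (i) upper semicontinuity of $F_{\mu_0}$ along decreasing $\mathcal{E}^1$-sequences, and (ii) coercivity, which reduces to showing that $\mu_0$ puts no mass on pluripolar sets. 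Since $\mu_0$ is the Lebesgue-type measure on the top-dimensional faces of $Sk(X)$ described in (\ref{Lebesguemeasure}), it is a non-pluripolar Radon measure with bounded density in piecewise Euclidean coordinates, and coercivity follows from the NA analogue of the Guedj–Zeriahi integrability inequality. Once a maximiser $\phi^*\in\mathcal{E}^1$ exists, the identity $MA(\phi^*)=(L^n)\mu_0$ is extracted by differentiating $F_{\mu_0}(\phi^*+\epsilon\psi)$ at $\epsilon=0$ against arbitrary bounded test potentials $\psi$, using that $E$ is Gateaux differentiable on $\mathcal{E}^1$ with differential $MA/(L^n)$.

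\emph{Main obstacle.} The hard part is upgrading the $\mathcal{E}^1$-solution $\phi^*$ to a continuous semipositive metric: this is the substantive content of the Boucksom–Jonsson theorem and is where the heavy NA pluripotential machinery is indispensable. The standard route is an NA analogue of Ko\l{}odziej's $L^\infty$ estimate: for any candidate finite-energy solution $\phi$ of $MA(\phi)=f\,\mu_0$ with $f$ bounded, control $\sup\phi-\inf\phi$ by a capacity argument, using that $\mu_0$ has bounded density on the piecewise Euclidean skeleton $Sk(X)$ and that the Fubini–Study envelopes $P(\min(\phi,k))$ are continuous (by the continuity-of-envelopes theorem of Boucksom–Favre–Jonsson, which was already invoked in the Lipschitz estimate of Lemma \ref{lem:Lip}). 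Combining the $L^\infty$ bound with a standard Dini/monotone-limit regularisation argument, one produces a uniform limit of NA Fubini–Study potentials converging to $\phi^*$, thereby placing $\phi^*$ in $\mathrm{CPSH}(X_K^{an},L)$ and completing the proof.
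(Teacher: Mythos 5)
The paper does not prove this theorem; it quotes it from \cite{Boucksomsurvey} and \cite{Boucksom} and uses it as a black box throughout. Your variational outline --- maximising a Mabuchi-type energy functional over a finite-energy class, reading off the Euler--Lagrange equation, and then regularising the maximiser to a continuous semipositive metric via continuity of envelopes and a Kolodziej-type $L^\infty$ estimate --- is the right broad shape of the Boucksom--Favre--Jonsson proof, so strategically your sketch is consistent with the cited references.

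However, there is a concrete error in your uniqueness step. You cite \cite[Lemma 8.4]{Boucksom}, recalled in this paper as Lemma \ref{lem:domination}, but misstate it: that lemma says that if $\phi,\phi'\in\mathrm{CPSH}(X_K^{an},\mathcal{L})$, $MA(\phi)$ is supported on $Sk(X)$, and $\phi\geq\phi'$ on $Sk(X)$, then $\phi\geq\phi'$ on all of $X_K^{an}$. It lets one reconstruct a potential from its restriction to the skeleton, but it says nothing about two potentials with equal Monge--Amp\`ere measures. The statement you quote in its place is the comparison principle, which is a different result, and even granted the comparison principle, deducing $\phi_1=\phi_2+\mathrm{const}$ from $MA(\phi_1)=MA(\phi_2)$ is itself a nontrivial theorem (Blocki--Dinew in the complex case). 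In \cite{Boucksom} uniqueness is instead obtained from the energy functional $F_{\mu_0}$ of equation (\ref{eqn:Fmu01}) and the characterisation of the solution as its unique minimiser; you should invoke that machinery rather than Lemma \ref{lem:domination}. Finally, the ``continuity of envelopes'' step you appeal to is precisely where the standing hypothesis of a (semistable) SNC model over the algebraic base, arranged up to finite base change in Section \ref{sect:recapNA}, enters: in the cited references the theorem is proved under such an algebraicity hypothesis, so if you want a self-contained argument you must make explicit where that hypothesis is used rather than treating envelope continuity as a free fact.
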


	We call $\norm{\cdot}_{CY,0}$ the NA Calabi-Yau metric. It can be characterised as the \emph{unique minimum of the convex  functional} $F_{\mu_0}: \text{CPSH}(X_K^{an}, \mathcal{L})\to \R$,
	\begin{equation}\label{eqn:Fmu01}
		F_{\mu_0}(\phi)= -  \frac{1}{(L^n)} E(\phi)+ \int_{X_K^{an}} \phi d\mu_0 =- \frac{1}{(L^n)}E(\phi)+ \int_{Sk(X)} \phi d\mu_0 .
	\end{equation}
	Here $E(\phi)$ is the \emph{Monge-Amp\`ere energy}, whose first variation is the NA MA measure.

	\subsection{Relative volume interpretation}\label{sect:relativevolume}

	Boucksom-Eriksson \cite{Boucksomnew1} gave an alternative formula for $E(\phi)$ in terms of relative volume as follows. Given two NA norms $\norm{}, \norm{}'$ on the $K$-vector space $V_l= H^0(X_K, lL)  $, the relative volume is
	\[
	vol(  \norm{}, \norm{}'  )=  \log \frac{  \det(\norm{}')  } {   \det(\norm{})   }.
	\]
	Concretely, if both $\norm{}$ and $\norm{}'$ are diagonal with respect to some NA orthogonal basis $s_1,\ldots s_N$ over the $K$-vector space $V$, namely
	\[
	\norm{ \sum a_i s_i}= \max_i |a_i| \norm{s_i}, \quad \norm{ \sum a_i s_i}'= \max_i |a_i| \norm{s_i}',\quad \forall a_i\in K,
	\]
	and $\norm{s_i}'= \lambda_i \norm{s_i}$, then $vol( \norm{}, \norm{}'    )= \log  \prod_i \lambda_i$.

	Given a continuous psh potentials $\phi, \psi \in \text{CPSH}(X_K^{an}, \mathcal{L})$, we can define a sequence of NA norms on the $K$-vector spaces $H^0(X_K, lL)$ for $l$ large enough,
	\[
	\norm{s}_{l\phi}= \sup_{X_K^{an}} |s|_{\mathcal{L}} e^{-l\phi},
	\]
	and similarly we define $\norm{s}_{l\psi}$. 
	By \cite[Theorem A]{Boucksomnew1}, the \emph{relative MA energy} admits the formula
	\[
	E(\phi)-E(\psi)= E(\phi, \psi)=  \lim_{l\to +\infty} \frac{n!}{ l^{n+1}} vol(   \norm{\cdot}_{l\phi}, \norm{\cdot}_{l\psi}     ) .
	\] 
	This determines $E(\phi)$ up to the choice of an additive constant.

	\subsection{NA MA measure vs. complex MA}\label{NAMAcxMA}

	The following `hybrid topology continuity principle' for the NA MA measure is essentially contained in the work of Favre \cite{Favre}, and further elucidated by Pille-Schneider \cite{PilleSchneider2}.

	We fix some ambient projective embedding of the degeneration family $X$ with ample polarisation $L\to X$, and fix a smooth Hermitian metric $h$ on  $L$ with positive curvature over the ambient space. The choice of $h$ does not matter. Then the Fubini-Study metric (\ref{NAFubiniStudy}) can be viewed as a hybrid topology limit as follows. 
	Given the data of the global sections $(s_1,\ldots s_N)$ in $H^0(X,lL)$ with no common zeros, and the real constants $c_1^l(t),\ldots c_N^l(t)$, we can then define the  continuous potentials on $(X_t,L)$,
	\[
	\phi_{l,t}= \frac{1}{l} \max_{1\leq i\leq N} (\log |s_i|_{h^{\otimes l}}-c_i^l(t) |\log |t||),
	\]
	corresponding to the continuous Hermitian metrics $h_{l,t}= he^{-2\phi_{l,t}}$ on $(X_t, L)$, which has non-negative curvature current on $X_t$. For later application, we comment that $s_1,\ldots s_N$ \emph{does not} need to be a basis of $H^0(X,lL)$. Suppose $t=t_j$ is a sequence tending to zero, and $c_i^l(t)\to c_i^l$ as $j\to +\infty$ for fixed $i,l$. We recall the NA Fubini-Study metric in (\ref{NAFubiniStudy}). Then for each fixed $l$,
	\begin{itemize}
		\item The potential $\frac{1}{|\log |t||}\phi_{l,t}\to \phi_{FS,l}$ as $t=t_j\to 0$ in the $C^0$-hybrid topology. Equivalently, the archimedean norm functions $|s|_{h_{l,t}}^{1/|\log |t||}\to \norm{s}_{FS,l}$ in $C^0$-hybrid topology for any local algebraic sections $s$ of $L$. 
		
		\item  The complex MA measure on $X_t$ associated to $h_{l,t}$ converges weakly as $t=t_j\to 0$ to the NA MA measure on $X_K^{an}$. (This is based on interpreting the concentration of complex MA measures in terms of intersection numbers, see \cite[Prop. 3.1]{Favre}.)
	\end{itemize}

	The more general potentials can be handled by uniform Fubini-Study approximation. Let $\phi_t$ be continuous $\omega_h$-psh potentials on $(X_t, L)$.  We assume the following uniform approximation property:  there exists small $r_l>0$, and a double sequence of approximants $\phi_{l,t}$ as above with $l\to +\infty$ and $t=t_j\to 0$, such that 
	\begin{equation}\label{uniformapproximation1}
		\sup_{X_t} | \phi_{l,t}-\phi_t| \leq \epsilon_l |\log |t||,\quad \forall  0<|t|\leq r_l,
	\end{equation}
	and $\epsilon_l\to 0$ as $l\to +\infty$. Then there is a continuous NA psh potential $\phi$ on $(X_K^{an}, L)$ which arises as the $C^0$-limit of $\frac{1}{|\log |t||}\phi_t$ as $t=t_j\to 0$. Moreover, the complex MA measures on $X_t$ associated to $\phi_t$ converge to the NA MA measure of $\phi$ as $t=t_j\to 0$. (This is based on the Chern-Levine inequality, see \cite[section 4]{Favre}.)

	\begin{rmk}
		Favre \cite[section 4]{Favre} required the small radii $r_l>0$ to be independent of $l$. This is not necessary if we a priori assume $\phi_t$ to have the $\omega_h$-psh property. Notice we do not need to require $\phi_t$ to have psh dependence on the family parameter $t$. The intention of treating $t$ as a sequence rather than a continuous variable, is that we shall later use some diagonal argument to extract some subsequence such that $c_i^l(t)\to c_i^l$. 
	\end{rmk}


	\section{Valuative independence and Canonical basis}\label{sect:canonicalbasis}

	\subsection{Valuative independence condition}\label{sect:valuativeindependence}

	We shall now digest the valuative independence condition in Def. \ref{Def:valuativeindependence}. Let $l\geq 1$, and let $s_1,
	\ldots s_N$ be holomorphic sections of $L^{\otimes l}\to X$, which are meromorphic over the field of convergent Laurent series in $t$. Upon base change, we can regard $s_i$ as sections over $X_K$ defined over the formal punctured disc.  By the GAGA principle, we can regard $s_i$ as sections of $L\to X_K^{an}$. Upon choosing some local trivialising section $\tau$ of $L\to X_K$, they induce local functions $s_i/\tau$, so we can take their NA norm with respect to the monomial valuation defined by any given point $x\in Sk(X)\subset X_K^{an}$.

	\begin{Def}\label{def:valuativeindependenceatx} 
		Given $x\in Sk(X)$, we say valuative independence holds for $s_1,\ldots s_N$ at $x$, if for any coefficient function $a_i\in K$, 
		\begin{equation}\label{eqn:valuativeindependence2}
			| \sum_i a_i s_i/\tau | (x)=  \max_i  |a_i|  |s_i/\tau|(x).
		\end{equation}
		Here $|a_i|= e^{- val(a_i)}$ uses the standard discrete valuation on $K$, which measures the vanishing order of $a_i$, and is independent of $x$. Notice that (\ref{eqn:valuativeindependence2}) does not depend on the choice of the local trivialising section $\tau$, since the norm factor of $\tau$ cancels out from both sides of the equation; we will take license to omit $\tau$ in the formula.
		
	\end{Def}

	Valuative independence can be viewed as a strong form of linear independence.

	\begin{lem}\label{lem:linearindependence}
		Suppose $s_i$ satisfy Def. \ref{def:valuativeindependenceatx}, then they are linearly independent over the field $K$. 
	\end{lem}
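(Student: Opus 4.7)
The proof is essentially a direct unpacking of the valuative independence hypothesis, and I expect it to be short. Here is the plan.

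The strategy is proof by contradiction. Suppose that $s_1,\ldots,s_N$ are linearly dependent over $K$, so there exist coefficients $a_1,\ldots,a_N \in K$, not all zero, with $\sum_i a_i s_i = 0$ in $H^0(X_K, L^{\otimes l})$. Then, for the fixed point $x\in Sk(X)$ from the valuative independence hypothesis, the left-hand side has $|\sum_i a_i s_i|(x) = 0$. Applying the identity in Def.~\ref{def:valuativeindependenceatx} gives
\[
0 \;=\; \bigl|\textstyle\sum_i a_i s_i\bigr|(x) \;=\; \max_i |a_i|\,|s_i|(x).
\]
In particular, for every index $i$ we have $|a_i|\,|s_i|(x)=0$.

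Next, I would note that $|s_i|(x) > 0$ whenever $s_i$ is a nonzero section. Indeed, $x\in Sk(X)$ is a quasi-monomial valuation point on some SNC model, and for any nonzero regular section $s_i$ of $L^{\otimes l}$ the vanishing order $val_x(s_i)$ computed through the Taylor expansion used to define $val_x$ is a finite real number (only the identically zero section has $val_x = +\infty$), so $|s_i|(x) = e^{-val_x(s_i)} > 0$. Sections that are identically zero can be discarded at the outset (they trivially violate the conclusion but are implicit in the setup that the $s_i$ are the nonzero distinguished sections under consideration).

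Combining the two observations, $|a_i|\,|s_i|(x)=0$ with $|s_i|(x)>0$ forces $|a_i|=0$, hence $a_i=0$, for every $i$. This contradicts our assumption that not all $a_i$ vanish, so the $s_i$ are $K$-linearly independent.

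The only mildly nonobvious ingredient is the positivity $|s_i|(x) > 0$, which is really a statement that monomial valuations on the generic fiber take finite values on nonzero regular sections; this is part of the construction of $val_x$ recalled just after Lemma~\ref{lem:Lip}'s setup and in Section~\ref{section:essentialskeleton}. Once that is in hand, the rest is a one-line manipulation of the ultrametric identity.
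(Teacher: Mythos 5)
Your proof is correct and takes essentially the same route as the paper: take $val_x$ of a purported linear relation, apply the valuative independence identity, and conclude all $a_i$ vanish. The paper's version is phrased directly rather than by contradiction and leaves the positivity $|s_i|(x)>0$ implicit, but you have correctly identified and justified that step; the two arguments are the same in substance.
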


	\begin{proof}
		Suppose $\sum_i a_i s_i=0$ with $a_i\in K$. Taking the valuation at any $x\in Sk(X)$, 
		\[
		0= | \sum_i a_i s_i| (x)=  \max_i  |a_i|  |s_i|(x),
		\]
		so $a_i=0\in K$. 
	\end{proof}

	In our setting, $X_t$ is Calabi-Yau, and $L\to X_t$ is ample, by Kodaira vanishing theorem and Riemann-Roch, the dimension
	\[
	N(l)= \dim H^0(X_t, lL)= \dim H^0(X_t, lL+K_{X_t})= \chi(X_t, lL+K_{X_t})= \chi(X_t, lL)
	\]
	is constant in $t$.  Let $\theta_1,\ldots \theta_{N(l)}$ be holomorphic sections of $L^{\otimes l}\to X$, which are meromorphic over the field of convergent Laurent series in $t$. Tautologically,

	\begin{lem}
		The sections $\theta_1,\ldots \theta_{N(l)}$ satisfy Def. \ref{Def:valuativeindependence} if and only if valuative independence holds  at every $x\in Sk(X)$.
	\end{lem}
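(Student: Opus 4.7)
The plan is to observe that this lemma is essentially a direct translation between the valuation language of Def.~\ref{Def:valuativeindependence} and the multiplicative seminorm language of Def.~\ref{def:valuativeindependenceatx}, so the proof reduces to checking that the two identities are termwise equivalent once one applies the correspondence $|\cdot| = e^{-val(\cdot)}$.

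More concretely, I would fix $l$, an arbitrary point $x \in Sk(X)$, and an arbitrary tuple $(a_i) \subset K$, and rewrite Def.~\ref{def:valuativeindependenceatx}'s identity $|\sum_i a_i s_i|(x) = \max_i |a_i|\,|s_i|(x)$ by taking $-\log$ of both sides. The left-hand side becomes $val_x(\sum_i a_i \theta_i^l)$ (absorbing the trivialising section $\tau$ into a common additive constant, exactly as the remark in Def.~\ref{def:valuativeindependenceatx} explains), while the right-hand side becomes $\min_i\bigl(val(a_i) + val_x(\theta_i^l)\bigr)$, where terms with $a_i = 0$ formally contribute $+\infty$ to the minimum and $0$ to the maximum, hence can be dropped on both sides. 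This is exactly the condition in Def.~\ref{Def:valuativeindependence}, so valuative independence at the point $x$ in the sense of Def.~\ref{def:valuativeindependenceatx} is equivalent to the pointwise instance of Def.~\ref{Def:valuativeindependence}.

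With this pointwise equivalence in hand, the ``if and only if'' of the lemma is immediate: Def.~\ref{Def:valuativeindependence} quantifies over all $x \in Sk(X)$ and all coefficient tuples, and Def.~\ref{def:valuativeindependenceatx} is precisely the ``at $x$'' version of the same condition, so the global condition is the conjunction of the local ones over all $x \in Sk(X)$. There is no real obstacle here; the only tiny point requiring care is the independence of the identity from the choice of local trivialisation $\tau$, which is already noted in Def.~\ref{def:valuativeindependenceatx} (the contribution of $\tau$ appears with the same exponent on both sides and cancels), and the convention for the $a_i = 0$ terms, which one should just state explicitly to be safe.
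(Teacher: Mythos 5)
Your proposal is correct and matches the paper's treatment exactly: the paper introduces this lemma with the word ``Tautologically'' and gives no further proof, precisely because, as you observe, Def.~\ref{Def:valuativeindependence} and the pointwise condition of Def.~\ref{def:valuativeindependenceatx} are the same statement under the dictionary $|\cdot| = e^{-val(\cdot)}$, with the global condition being the conjunction over all $x \in Sk(X)$. Your extra care about the $\tau$-independence and the $a_i = 0$ convention is reasonable bookkeeping but does not change the substance.
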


	\begin{cor}
		For small $t\in \mathbb{D}^*$, the restriction of $\theta_i$ form a basis of $H^0(X_t, lL)$. 
	\end{cor}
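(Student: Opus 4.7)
The plan is to leverage Lemma \ref{lem:linearindependence}, which already asserts $K$-linear independence of the $\theta_i$, and to upgrade this to pointwise $\mathbb{C}$-linear independence on each fiber $X_t$ for $t$ in a small punctured neighbourhood of $0$. Since the numbers match, any $N(l)$ linearly independent sections of $H^0(X_t, lL)$ automatically form a basis.

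First I would observe that by the constancy of $N(l) = \dim H^0(X_t, lL)$ in $t$ (recorded just above), the direct image $\pi_{*}L^{\otimes l}$ (with $\pi \colon X\to \mathbb{D}^{*}$) is locally free of rank $N(l)$, and the $\theta_i$ are global holomorphic sections of it over $\mathbb{D}^{*}$. The condition that the restrictions $\theta_1|_{X_t},\ldots,\theta_{N(l)}|_{X_t}$ form a basis is then equivalent to the non-vanishing at $t$ of the wedge product
\[
\Theta := \theta_1\wedge\cdots\wedge\theta_{N(l)},
\]
viewed as a section of the line bundle $\det\pi_{*}L^{\otimes l}$ on $\mathbb{D}^{*}$.

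Next I would argue that $\Theta$ is not identically zero. If it vanished on all of $\mathbb{D}^{*}$, then by Cramer's rule (applied to the matrix of coordinates of the $\theta_i$ in any local holomorphic frame of $\pi_{*}L^{\otimes l}$) one could find holomorphic coefficients $a_i(t)$ on $\mathbb{D}^{*}$, not all zero, satisfying $\sum_i a_i(t)\theta_i\equiv 0$. Since the $\theta_i$ are meromorphic at $0$, the $a_i$ can be chosen meromorphic at $0$ as well after shrinking; taking their Laurent expansions at $0$ embeds them into $K=\mathbb{C}(\!(t)\!)$ and produces a nontrivial $K$-linear relation among the $\theta_i$, contradicting Lemma \ref{lem:linearindependence}. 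Finally, because each $\theta_i$ is meromorphic at $0$, so is $\Theta$ in any local trivialisation of $\det\pi_{*}L^{\otimes l}$ near $0$; a nonzero meromorphic function at $0$ has isolated zeros with no accumulation at $0$, so $\Theta(t)\neq 0$ for all sufficiently small $|t|>0$, yielding the basis.

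The only mildly delicate point is the second step, where one must verify that linear dependence over the ring of holomorphic functions on $\mathbb{D}^{*}$ genuinely entails $K$-linear dependence; this is handled by the observation that convergent Laurent expansions at $0$ embed into $K$. Everything else is standard sheaf and linear algebra, so I do not anticipate any real obstruction.
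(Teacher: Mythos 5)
Your proposal is correct and follows essentially the same route as the paper: the paper's ``closed analytic subvariety'' is exactly your zero locus of $\Theta=\theta_1\wedge\cdots\wedge\theta_{N(l)}$, and the paper likewise relies (implicitly) on $K$-linear independence to see this locus is proper and on meromorphicity at $0$ to rule out accumulation of zeros at the origin before invoking dimension counting. You simply make those steps explicit; the only thing one could streamline is the Cramer's-rule detour---since $\theta_1,\ldots,\theta_{N(l)}$ form a $K$-basis of $H^0(X_K,lL)$ by Lemma~\ref{lem:linearindependence}, $\det$ of the coordinate matrix in a meromorphic frame is automatically a nonzero element of $K$, hence nonvanishing near $0$.
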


	\begin{proof}
		The linear independence of $\theta_i |_{X_t}\in H^0(X_t, lL)$ can fail only on a closed analytic subvariety, so it holds after possibly shrinking the disc. By dimension counting, $\theta_i|_{X_t}$ form a basis.
	\end{proof}

	\subsection{Equivalent characterisation}\label{sect:equivalentcharacterisation}

	Given the sections $s_1,\ldots s_N$, we wish to concretely determine when they satisfy the valuative independence condition at $x\in Sk(X)$. We denote $m=\dim Sk(X)$, and throughout the paper we assume $1\leq m\leq n$.  Let $(\mathcal{X}, \mathcal{L})\to S$ be an SNC model of $(X,L)$, and let $E_J=\cap_{i\in J} E_i$ be a depth $m+1$ intersection of SNC divisor components $E_i$ on the central fibre, such that the divisorial valuations for $E_i$ achieve the minimum value of the log discrepancy, hence the corresponding simplex $\Delta_J$ in the dual complex lies on an $m$-dimensional face of the essential skeleton $Sk(X)$. 
	Let $z_i$ be the local equations of the divisors $E_i$, and we can arrange $t=z_0^{b_0}\ldots z_m^{b_m}$.

	Upon fixing a choice of local trivialising section $\tau$ for the line bundle, we can regard the sections of $L$ as local meromorphic functions which are holomorphic for small $t\neq 0$, and Taylor expand
	\[
	s/\tau=\sum_{ \alpha\in \Z^{m+1} } f_\alpha z_0^{\alpha_0} \ldots z_m^{\alpha_m},
	\]
	where $f_\alpha$ are functions of the local coordinates of $E_J$, and negative $\alpha_i$ terms appear at most finitely many times. The monomial valuation of $s$ at the interior point $x\in \Delta_J\subset Sk(X)$ is
	\[
	-\log 
	|s|(x)=val_x(s) = \min \{    \langle x,\alpha\rangle   |f_\alpha\neq 0  \}. 
	\]

	\begin{cor}
		On any given face $\Delta_J$, the function $\log |s|$ is the maximum of a finite collection of integral affine linear functions, and in particular is piecewise affine linear.
	\end{cor}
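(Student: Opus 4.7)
The plan is to rewrite $\log|s|$ as a supremum of integral affine linear functionals indexed by the Newton support of $s/\tau$, and then cut this index set down to a finite one via coordinatewise monotonicity on $\Delta_J$.

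Starting from the monomial valuation formula already recorded above the corollary, one has
\[
\log|s|(x) = -val_x(s) = \sup_{\alpha\in\Sigma} \bigl(-\langle x,\alpha\rangle\bigr),\qquad \Sigma := \{\alpha\in\Z^{m+1} : f_\alpha\neq 0\}.
\]
Each summand $x\mapsto -\langle x,\alpha\rangle$ is an integral affine linear function on the simplex $\Delta_J$, so the statement reduces to replacing $\Sigma$ by a finite subset without altering this supremum. Because $s/\tau$ is holomorphic away from $t=0$ with at worst controlled poles along $E_0,\ldots,E_m$, the exponents in $\Sigma$ are bounded below componentwise, say $\alpha_i\geq -N$; shifting by $N\mathbf{1}$ we may as well assume $\Sigma\subset\Z_{\geq 0}^{m+1}$.

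On $\Delta_J$ all coordinates satisfy $x_i\geq 0$. Hence if $\alpha\leq\alpha'$ componentwise with both in $\Sigma$, one has $-\langle x,\alpha'\rangle\leq -\langle x,\alpha\rangle$ everywhere on $\Delta_J$, so $\alpha'$ contributes nothing to the supremum. It therefore suffices to take the sup over the subset $\Sigma_{\min}$ of coordinatewise-minimal elements of $\Sigma$. By Dickson's lemma (equivalently, the fact that $(\Z_{\geq 0}^{m+1},\leq)$ is a well-partial order), any subset of $\Z_{\geq 0}^{m+1}$ has finitely many minimal elements, so $\Sigma_{\min}$ is finite, and
\[
\log|s|(x) = \max_{\alpha\in\Sigma_{\min}} \bigl(-\langle x,\alpha\rangle\bigr)
\]
exhibits the desired representation; piecewise affine linearity follows for free. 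The only non-formal ingredient is the lower bound $\alpha_i\geq -N$ on the Newton support, which is a routine consequence of $s$ being a section of $L^{\otimes l}$ meromorphic in $t$ controlled by the chosen model $\mathcal{L}$, so no substantive obstacle arises in the argument.
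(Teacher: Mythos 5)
Your argument is correct and fills in exactly what the paper leaves implicit: the corollary is stated without proof, relying on the preceding valuation formula $\log|s|(x)=\max\{-\langle x,\alpha\rangle : f_\alpha\neq 0\}$ together with the (asserted) finiteness of negatively-valued exponents, and your combination of componentwise monotonicity of $-\langle x,\alpha\rangle$ on $\Delta_J\subset\R_{\geq 0}^{m+1}$ with Dickson's lemma is the standard way to justify the finiteness of the relevant $\alpha$'s. One small remark: the paper's phrasing "negative $\alpha_i$ terms appear at most finitely many times" is actually a stronger input than the componentwise lower bound you invoke, so your hypothesis is even weaker than what the model $\mathcal{L}$ provides; either suffices.
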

	
	For any fixed $l$, and any nonzero section $s$ of $L^{\otimes l}\to X_K^{an}$, 
	the valuation functions $\log |s|$ have uniform Lipschitz constants independent of $s$ by Lemma \ref{lem:Lip}, so modulo adding an integer constant, there are only a finite number of possibilities for these integral affine linear functions. In particular, the corner locus of $\log |s|$ is contained in a finite union of codimension one walls $\mathcal{S}\subset Sk(X)$. We denote the connected components of $Sk(X)\setminus \mathcal{S}$ as $O_k$, which are polyhedral open domains in some $m$-dimensional face $\Delta_J^0$. On each $O_k$, the valuation functions $\log |s_i|$ are affine linear in $x$, and correspondingly only one monomial term $f_{i,\alpha} z_0^{\alpha_{0,i}}\ldots z_m^{\alpha_{m,i}}$ dominates all the other terms in the Taylor expansion of $s_i$. Given a different choice of the local defining equations $z_i$ for $E_i$, then the Taylor expansion would be different, but the exponent of the leading monomial term $(\alpha_{0,i}, \ldots, \alpha_{m,i})$ would be unchanged.  We partition the exponents into equivalence classes modulo integer multiples of $(b_0,\ldots b_m)$.


	\begin{lem}(Equivalent characterisation)
		\label{lem:valuativeindependence2}
		Given any connected component $O_k$, 
		Def. \ref{def:valuativeindependenceatx} holds for $s_1,\ldots s_N$ at every $x\in O_k$, if and only if the following holds: whenever the leading exponents $(\alpha_{0,i},\ldots ,\alpha_{m,i})$ for some subcollection of $s_i$ fall within the same equivalence class modulo $\Z(b_0,\ldots b_m)$, the coefficient functions $f_{i,\alpha}$ are $\C$-linearly independent as local functions on $E_J$.

	\end{lem}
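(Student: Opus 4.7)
The approach is to expand $\sum_i a_i s_i$ as a double series in the coordinates $z_0,\ldots,z_m$ and organize the terms by their $x$-weighted order, then extract the leading contribution and identify precisely when it can cancel. The inequality $val_x(\sum_i a_i s_i) \geq \min_i(val(a_i) + val_x(s_i))$ is automatic from the ultrametric property, so the valuative independence at $x$ is equivalent to the non-vanishing of the leading-order term of the sum for every choice of $(a_i)$. My task is to recast this non-vanishing as a linear-independence statement about the coefficient functions $f_{i,\alpha_i^{\mathrm{lead}}}$.

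First I would write $a_i = c_i t^{val(a_i)} + O(t^{val(a_i)+1})$ in $K=\C(\!(t)\!)$, expand $s_i = f_{i,\alpha_i^{\mathrm{lead}}} z^{\alpha_i^{\mathrm{lead}}} + (\text{higher } x\text{-weight})$ on $O_k$, and use the identity $t^j z^\alpha = z^{\alpha + j(b_0,\ldots,b_m)}$. Then the contribution of $a_i s_i$ at the smallest possible $x$-weight $val(a_i) + \langle x,\alpha_i^{\mathrm{lead}}\rangle$ is the single monomial
\[
c_i\, f_{i,\alpha_i^{\mathrm{lead}}}\, z^{\alpha_i^{\mathrm{lead}} + val(a_i)(b_0,\ldots,b_m)}.
\]
Set $M = \min_i(val(a_i) + \langle x,\alpha_i^{\mathrm{lead}}\rangle)$ with minimizing set $I$. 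The leading part of $\sum_i a_i s_i$ at $x$-weight $M$ is the sum of such monomials over $i \in I$. The crucial observation is that two contributions $z^{\alpha_i^{\mathrm{lead}} + val(a_i)(b_0,\ldots,b_m)}$ and $z^{\alpha_j^{\mathrm{lead}} + val(a_j)(b_0,\ldots,b_m)}$ coincide as monomials in $z$ if and only if $\alpha_i^{\mathrm{lead}} \equiv \alpha_j^{\mathrm{lead}} \pmod{\Z(b_0,\ldots,b_m)}$ (with a matched condition on $val(a_i), val(a_j)$); contributions from distinct equivalence classes live at distinct monomials and therefore cannot cancel each other. This reduces the cancellation question to each equivalence class separately.

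For the $(\Leftarrow)$ direction, assuming $\C$-linear independence of $\{f_{i,\alpha_i^{\mathrm{lead}}} : i \in \lambda\}$ on $E_J$ for each class $\lambda$, the class-$\lambda$ contribution equals $\big(\sum_{i\in I\cap\lambda} c_i f_{i,\alpha_i^{\mathrm{lead}}}\big)\, z^{\gamma_\lambda}$ with all $c_i\neq 0$, which is a nonzero element of $\mathcal{O}_{\mathcal{X},\eta}$ by the independence assumption. Hence the leading term of $\sum_i a_i s_i$ at $x$-weight $M$ is a sum of nonzero monomials at distinct $z$-exponents, and does not vanish at $\eta$, giving $val_x(\sum_i a_i s_i) = M$ as required. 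For the $(\Rightarrow)$ direction, I argue contrapositively: given a nontrivial relation $\sum_{i\in I'} c_i f_{i,\alpha_i^{\mathrm{lead}}} = 0$ on $E_J$ with $I'$ inside a single class $\lambda$, write $\alpha_i^{\mathrm{lead}} = \beta_\lambda + k_i(b_0,\ldots,b_m)$ and choose $a_i = c_i t^{N-k_i}$ for $i \in I'$ (with $N$ large enough to ensure $a_i \in R$) and $a_i = 0$ otherwise. Then all nonzero $val(a_i) + val_x(s_i)$ equal $N + \langle x,\beta_\lambda\rangle$, the leading monomials all coincide with $z^{\beta_\lambda + N(b_0,\ldots,b_m)}$, and their coefficients sum to $\sum_i c_i f_{i,\alpha_i^{\mathrm{lead}}} = 0$, so the $M$-order term cancels and valuative independence fails at $x$.

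The only real subtlety — and the step that deserves the most care — is the bookkeeping on the interaction between the $t$-adic shift of $a_i$ and the leading exponent of $s_i$, specifically the identification that monomials can merge across indices exactly when the leading exponents agree modulo $\Z(b_0,\ldots,b_m)$. Once this identification is clear, the rest is an essentially formal linear-algebra argument within each equivalence class; the construction of the witness $(a_i)$ in the contrapositive is direct. I would also briefly verify, for the $(\Leftarrow)$ step, that subleading contributions (from higher $t$-order of $a_i$ or higher $x$-weight monomials of $s_i$) strictly increase the $x$-weight, so they cannot interfere with the nonvanishing of the $M$-th order term.
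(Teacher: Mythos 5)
Your proof is correct and follows essentially the same route as the paper: reduce to non-vanishing of the leading coefficient at $x$-weight $M$, note that contributions from distinct equivalence classes of leading exponents land on distinct $z$-monomials and hence cannot cancel, and within a class translate non-vanishing into $\C$-linear independence of the $f_{i,\alpha}$ (the converse producing a witness by matching powers of $t$). Your write-up is somewhat more explicit than the paper's about the bookkeeping of the $t$-adic shift versus the equivalence-class identification, which is indeed the step requiring the most care, but the underlying argument is the same.
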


	\begin{proof}
		We first show the if direction.
		Given any coefficient functions $a_i\in K$ for $i=1,\ldots N$, we need to show
		\[
		val_x (\sum a_i s_i)= \min_i (val(a_i) +  val_x(s_i))=  \min_i (val(a_i) +  \sum_k \alpha_{k,i} x_k).
		\]
		Let $1\leq i\leq N$ be an index achieving the minimum in the RHS, then it suffices to show that the term 
		\[
		t^{val(a_i)} z_0^{\alpha_{0,i}}\ldots  z_m^{\alpha_{m,i}} = z_0^{\alpha_{0,i}+ b_0 val(a_i)}\ldots  z_m^{\alpha_{m,i}+b_m val(a_i)}
		\]
		appears with nonzero coefficient function in the Taylor expansion of $\sum a_j s_j$. By construction, the subleading monomials have higher valuation on $O_k$, so can be neglected, and the exponents in other equivalence classes mod $\Z(b_0,\ldots b_m)$ cannot contribute to this term. Among the indices in the same equivalence class, the above linear independence condition for $f_{i,\alpha}$ ensures that after taking linear combination, the coefficient function cannot be identically zero.

		For the converse direction, suppose the $\C$-linear relation $\sum_i c_i f_{i,\alpha}=0$ holds within some equivalence class of $s_i$. We can find some integers $val(a_i)$, such that the monomial
		\[
		t^{val(a_i)} z_0^{\alpha_{0,i}}\ldots  z_m^{\alpha_{m,i}} = z_0^{\alpha_{0,i}+ b_0 val(a_i)}\ldots  z_m^{\alpha_{m,i}+b_m val(a_i)}
		\]
		are the same for these $s_i$. Then taking $a_i= c_i t^{val(a_i)}$, in the expansion of $\sum a_is_i$ the coefficient function for the above monomial is zero, so $val_x(\sum a_is_i)$ is strictly bigger than $\min_i (val(a_i) +  val_x(s_i))$, violating the valuative independence condition. 
	\end{proof}

	\begin{prop}\label{prop:localvaluativeindependence}
		(Local valuative independence)
		Given any connected component $O_k\subset Sk(X)\setminus \mathcal{S}$, we can find a $K$-basis of sections $s_1^{(k)}, \ldots s_{N(l)}^{(k)}$, such that Def. \ref{def:valuativeindependenceatx} is satisfied for any $x\in O_k$.
	\end{prop}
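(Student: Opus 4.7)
The strategy is, via Lemma \ref{lem:valuativeindependence2}, to reduce the statement to the construction of a $K$-basis orthogonal at a single interior point of $O_k$ for the NA norm $e^{-val_{x_0}(\cdot)}$.

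First, I would observe that the argument in the converse direction of Lemma \ref{lem:valuativeindependence2} (which constructs a coefficient tuple $a_i = c_i t^{val(a_i)}$ killing a hypothetical $\C$-linear relation among leading coefficient functions) only uses valuative independence at a single point. Therefore valuative independence at one interior point $x_0 \in O_k$ already forces the $\C$-linear independence of the leading coefficient functions $f_{i,\alpha}$ within each equivalence class modulo $\Z(b_0,\ldots,b_m)$, a condition that does not depend on which $x$ is chosen. The forward direction of the lemma then automatically propagates the property to every $x \in O_k$. So it suffices to produce a $K$-basis of $V := H^0(X_K, lL)$ with valuative independence at the single point $x_0$.

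Second, I would construct such a basis by an explicit filtration/graded-piece argument. For each $r \in \R$ set $V_{\geq r} = \{s \in V : val_{x_0}(s) \geq r\}$ (an $R$-submodule, where $R = \C[\![t]\!]$), and similarly define $V_{>r}$; the quotient $V_{\geq r}/V_{>r}$ is annihilated by $t$, hence is a $\C$-vector space, and is nonzero for only finitely many $r$ in any interval of length one (because $\dim_K V < \infty$ and multiplication by $t$ shifts the filtration by $1$). Pick representatives $r_1,\ldots,r_M \in [0,1)$ covering the nonzero graded pieces modulo $\Z$, and for each $r_j$ lift a $\C$-basis of $V_{\geq r_j}/V_{>r_j}$ to elements of $V_{\geq r_j}$. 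The resulting collection $s_1^{(k)},\ldots,s_{N(l)}^{(k)}$ is a $K$-basis of $V$, and a direct verification (using that a nontrivial $\C$-linear combination of $\C$-linearly independent lifts remains nonzero modulo $V_{>r_j}$) gives the identity
\[
val_{x_0}\Big(\sum_i a_i s_i^{(k)}\Big) = \min_i \big(val(a_i) + val_{x_0}(s_i^{(k)})\big), \qquad \forall\, a_i \in K,
\]
which is exactly valuative independence at $x_0$.

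The main obstacle is to justify that the filtration $V_{\geq r}$ has finite-dimensional graded pieces whose total $\C$-rank equals $\dim_K V$. This is elementary once one notes that $V_{\geq 0}$ is a finitely generated $R$-module of rank $N(l)$ (all NA norms on a finite-dimensional $K$-vector space being equivalent, so the unit ball is a lattice), and that distinct nontrivial graded pieces $V_{\geq r}/V_{>r}$ with $r \in [0,1)$ contribute independently to this rank. In conceptual terms, this is the classical fact that finite-dimensional NA Banach spaces over the spherically complete field $\C(\!(t)\!)$ admit orthogonal bases; the filtration argument above is simply the explicit construction.
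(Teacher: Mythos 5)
Your proof is correct, and it takes a genuinely different route from the paper's. The paper argues by inductive enlargement: suppose a valuatively independent collection $s_1,\ldots,s_N$ with $N<N(l)$ cannot be enlarged; then for any $s$ not in its $K$-span, Lemma~\ref{lem:valuativeindependence2} forces the leading coefficient function $f_\alpha$ of $s$ over $O_k$ to be a $\C$-linear combination of the $f_{i,\alpha}$, so subtracting the corresponding $\sum c_i t^{n_i}s_i$ strictly increases $val_x(s)$ on all of $O_k$; iterating yields $s=\sum a_i s_i$, a contradiction. You instead first observe the useful reduction that both directions of Lemma~\ref{lem:valuativeindependence2} already act at a single interior point $x_0$ (the converse constructs a counterexample valid at every $x\in O_k$, hence at $x_0$; the forward direction propagates $\C$-linear independence to all of $O_k$), so it suffices to build a basis orthogonal for the NA norm $e^{-val_{x_0}(\cdot)}$ on $H^0(X_K,lL)$. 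You then produce such a basis by the standard filtration-and-lift construction, or equivalently by citing the classical existence of orthogonal bases for finite-dimensional NA normed spaces over the discretely valued complete field $\C(\!(t)\!)$. The paper's argument is more self-contained and in effect performs NA Gram-Schmidt in disguise, whereas your version isolates the underlying functional-analytic fact and makes explicit that valuative independence on $O_k$ is really a one-point condition at a generic $x_0$; this reduction is a genuinely clarifying observation not made explicit in the paper. One minor point worth flagging, which applies equally to the paper's proof: the construction a priori produces elements of $H^0(X_K,lL)$ (formal Laurent coefficients), and an additional remark is needed to replace them by sections meromorphic over a punctured disc, as the proposition's phrasing and later use (Section~\ref{sect:valuativeconstraints}) require; this is harmless since orthogonality is stable under sufficiently $val_{x_0}$-small perturbations of the basis.
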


	\begin{proof}
		We inductively enlarge the collection $s_1,\ldots s_N$ while keeping the valuative independence condition at $x$. 
		Suppose the contrary that a collection $s_1,\ldots, s_N$ with $N<N(l)=\dim H^0(X_t, lL)$ cannot be enlarged.

		For any section $s$ of $L^{\otimes l}\to X_K^{an}$ not in the $K$-span of $s_1,\ldots s_N$, we can extract its leading Taylor expansion term $f_\alpha z_0^{\alpha_0}\ldots z_m^{\alpha_m}$ over  $O_k$.  By the characterisation of Lemma \ref{lem:valuativeindependence2}, and the hypothesis that $s_1,\ldots s_N, s$ fail to satisfy Def. \ref{def:valuativeindependenceatx}, we infer that 
		\[
		f_\alpha z_0^{\alpha_0}\ldots z_m^{\alpha_m}=\sum c_it^{n_i} f_{i,\alpha} z_0^{\alpha_{0,i}}\ldots z_m^{\alpha_{m.i}}
		\] for a suitable equivalence class of exponents $(\alpha_{0,i},\ldots \alpha_{m,i}) $, and $c_i\in \C$, $n_i\in \Z$. Consequently,  this leading Taylor expansion term cancels out in $s-\sum c_i t^{n_i} s_i$, hence $val_x(s-
		\sum c_i t^{n_i} s_i)> val_x(s)$ at any $x\in O_k$. Repeating this process, we deduce that $s=\sum a_i s_i$ for suitable $a_i\in K$, contradiction.
	\end{proof}

	\begin{rmk}\label{rmk:choiceofbasiss_i^k}
		Any other choices of such $K$-basis $\tilde{s}_1^{(k)}, \ldots , \tilde{s}_{N(l)}^{(k)}$ is related to $s_1^{(k)}, \ldots s_N^{(k)}$ as follows:
		\begin{itemize}
			\item  We classify the sections $\tilde{s}_i^{(k)}, s_i^{(k)}$ according to the leading order exponent $(\alpha_0,\ldots \alpha_m)$ modulo $(b_0,\ldots b_m)$, on the connected component $O_k$. Within each equivalence class, up to multiplying by powers of $t$, we may assume $(\alpha_0,\ldots, \alpha_m)$ are the same.

			\item  We write $\tilde{s}_j^{(k)} = \sum_i s_i^{(k)} B_{ij}$, for some transformation matrix $B\in GL(N(l), K)$. Applying the valuative independence condition at any point $x\in O_k$, we deduce for any $1\leq i,j\leq N(l)$ that
			\[
			val_x(\tilde{s}_j^{(k)} )\leq val_x( s_i^{(k)}  )+ val(B_{ij}).   
			\]
			
			Moreover, if equality is achieved at one interior point $x_0\in O_k$, then by the affine linearity of $val_x(s)$ as a function of $x\in O_k$, the equality must be achieved for all $x\in O_k$. This means the two valuation functions differ by an integer, or equivalently the leading order exponents of $s_j^{(k)}$ and $\tilde{s}_i^{(k)}$ fall inside the same equivalence class. By our preliminary normalisation, these leading order exponents are equal, hence $val(B_{ij})=0$. 
			The same kind of conclusions hold for $B^{-1}$. 
			
			\item Now we construct the `leading order matrix' $\bar{B}$ (and likewise $\overline{B^{-1}}$), which are size $N(l)$ square matrices with entries in $\C$. For each $i,j$, if the strict inequality holds for $x\in O_k$,
			\[
			val_x(\tilde{s}_j^{(k)} )< val_x( s_i^{(k)}  )+ val(B_{ij}),
			\]
			then we set $\bar{B}_{ij}=0$. Hence $\bar{B}_{ij}$ vanishes when the leading exponents of $\tilde{s}_j^{(k)} $ and $s_i^{(k)} $ fall in different equivalence classes, namely the matrix $\bar{B}_{ij}$ is `block diagonal'.
			Otherwise we are in the equality case, and we set $\bar{B}_{ij}$ to be the constant term of the power series $B_{ij}(t)$.

			\item We start with the matrix identities 
			\[
			\sum_a B_{ia} B^{-1}_{aj} = \delta_{ij}, 
			\]
			and work modulo $O(t)$. For each pair $i,j$ such that $s_i^{(k)}$ and $s_j^{(k)}$ lie in the same equivalence class of exponents, then
			\[
			\begin{split}
				& val(B_{ia}B^{-1}_{aj})= val(B_{ia}) + val(B^{-1}_{aj}) 
				\\
				& \geq  -val_x(s_i^{(k)}) + val_x( \tilde{s}_a^{(k)} ) -val_x( \tilde{s}_a^{(k)} )+ val_x(s_j^{(k)}) =0,
			\end{split}
			\]
			and equality only holds when $\tilde{s}_a^{(k)}$ is also in the same equivalence  class. Thus the leading order term gives $\bar{B} \overline{B^{-1}}=I$, so $\bar{B}\in GL(N(l), \C)$.

			\item Applying the transformation matrix $\bar{B}$ to the $K$-basis $s_i^{(k)}$, then the new basis will have the same leading order term $f_\alpha z_0^{\alpha_0}\ldots z_m^{\alpha_m}$ as $\tilde{s}_i^{(k)}$, for $i=1,\ldots N(l)$. 
			The remaining transformation matrix $\bar{B}^{-1}B$ fixes the leading order terms, and can be viewed as a perturbation of the identity matrix.

			\item  Suppose there exists some basis $\theta_1,\ldots \theta_{N(l)}$ satisfying the valuative independence condition Def. \ref{Def:valuativeindependence}, one may ask \emph{to what extent is $\theta_1,\ldots \theta_{N(l)}$ a canonical basis}. Now Def. \ref{Def:valuativeindependence} is equivalent to saying that Def. \ref{def:valuativeindependenceatx} holds for all $x\in O_k$ simultaneously for all the choices of $O_k$. Thus up to multiplying $\theta_i$ by some powers of $t$, we have the freedom to apply a `block diagonal' transformation matrix in $GL(N(l),\C)$, respecting the equivalence classes of exponents on all connected components $O_k$. The remaining freedom in the transformation matrix $B$ is a `perturbation of the identity', namely 
			\[
			B=I+B',\quad  val(B'_{ij}) > val_x(\theta_j)- val_x(\theta_i) ,\quad \forall x\in \cup_k O_k. 
			\]
			
		\end{itemize}

	\end{rmk}

	\subsection{Filtration by valuative constraints}\label{sect:filtration}

	We now use \emph{valuative constraints} to define a multi-graded filtration on the $K$-vector space $H^0(X_K^{an},lL)$. Given any connected component $O_k\subset Sk(X)\setminus \mathcal{S}$, Prop. \ref{prop:localvaluativeindependence} provides the basis sections $s_1^{(k)},\ldots, s_N^{(k)}$ satisfying Def. \ref{def:valuativeindependenceatx} for any $x\in O_k$.   For any $s\in H^0(X_K^{an}, lL)$, we can expand $s=\sum_{i=1}^{N(l)} a_i^k s_i^{(k)}$ with $a_i^k\in K$. Let $\underline{\mu}=(\mu_{i,k})$ be an integer vector, then we can define the filtered piece 
	\[
	F_{\underline{\mu} } = \{     s\in  H^0(X_K^{an}, lL) : val(a_i^k) \geq \mu_{i,k} ,\forall 1\leq i\leq N(l), \forall k     \},
	\]
	which is a module over the ring $R= \C[\![t]\!]$. The union of all these filtered pieces is $H^0(X_K^{an}, lL)$.
	Given two integer vectors $\underline{\mu}, \underline{\mu}'$, we say $\underline{\mu}'\geq \underline{\mu}$ (resp. $\underline{\mu}'> \underline{\mu}$), if $\underline{\mu}'-\underline{\mu}$ have non-negative integer entries (resp. excluding equality). Clearly $F_{\underline{\mu}'}  \subset F_{\underline{\mu}} $ if $\underline{\mu}'\geq \underline{\mu}$. Let the multigraded pieces be the $\C$-vector space
	\[
	gr_{\underline{\mu} }  = F_{\underline{\mu}} /\oplus_{ \underline{\mu}'> \underline{\mu} }  F_{ \underline{\mu}' }.
	\]
	Any nontrivial element in $gr_{\underline{\mu}}$ represented by some section $s$, satisfies $val(a_i^k) =\mu_{i,k}$. Thus $gr_{\underline{\mu}}$ is at most one-dimensional.

	The graded pieces satisfy a periodicity condition due to the action of $t$ on $H^0(X_K^{an}, lL)$, which induces an $R$-module isomorphism $F_{\underline{\mu}} \simeq t F_{\underline{\mu}}= F_{ \underline{\mu}+ \underline{1}  }$, where $\underline{1}= (1,\ldots 1)$. Each periodicity equivalence class has a unique representative $\underline{\mu}$ satisfying $\min_{i,k} \mu_{i,k}=0$.

	\begin{lem}
		If $gr_{\underline{\mu}}$ is nonzero and $\min_{i,k} \mu_{i,k}=0$, then  $\max_{i,k } \mu_{i,k}\leq C$.
	\end{lem}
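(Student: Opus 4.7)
The plan is to regard $F_{\underline{0}}$ as a finitely generated $R$-module and reduce the lemma to a finite-dimensional dimension count. First, the condition $val(a_i^k)\geq 0$ for all $i,k$ is precisely the assertion that $s$ lies in the $R$-span of each local basis $s_i^{(k)}$, so
\[
F_{\underline{0}} \;=\; \bigcap_k \bigl(R s_1^{(k)}+\cdots+R s_{N(l)}^{(k)}\bigr),
\]
an intersection of finitely many $R$-lattices inside the $N(l)$-dimensional $K$-vector space $H^0(X_K^{an}, lL)$. Since $R=\C[\![t]\!]$ is a discrete valuation ring, any two $R$-lattices in a common finite-dimensional $K$-vector space contain each other up to a power of $t$, so their intersection is again a full-rank $R$-lattice; iterating over $k$, one concludes that $F_{\underline{0}}$ is a free $R$-module of rank $N(l)$.

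Combined with the periodicity $F_{\underline{1}} = tF_{\underline{0}}$, this gives $F_{\underline{0}}/F_{\underline{1}} \cong F_{\underline{0}}/tF_{\underline{0}} \cong \C^{N(l)}$, a finite-dimensional $\C$-vector space. For any $\underline{\mu}$ with $\min_{i,k}\mu_{i,k} = 0$ and $gr_{\underline{\mu}}\neq 0$, a representative $s_{\underline{\mu}} \in F_{\underline{\mu}} \setminus \sum_{\underline{\mu}'>\underline{\mu}} F_{\underline{\mu}'}$ satisfies $val(a_i^k(s_{\underline{\mu}})) = \mu_{i,k}$ exactly (as observed in the text), and since some coordinate has valuation $0$, $s_{\underline{\mu}}$ projects to a nonzero class $\bar s_{\underline{\mu}} \in F_{\underline{0}}/F_{\underline{1}}$. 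The lemma will follow once the set of such $\underline{\mu}$ is shown to be finite, because a finite subset of $\N^P$ (with $P$ the number of pairs $(i,k)$) is automatically bounded in each coordinate, yielding $\max_{i,k}\mu_{i,k}\leq C=C(l)$.

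The key remaining step, and the main obstacle, is to bound the number of nontrivial graded pieces by something finite; ideally by $N(l)$ via $\C$-linear independence of the classes $\bar s_{\underline{\mu}}$ in $F_{\underline{0}}/F_{\underline{1}}$. Given a hypothetical relation $\sum c_{\underline{\mu}} s_{\underline{\mu}} \in F_{\underline{1}}$ with scalars $c_{\underline{\mu}}\in\C$ not all zero, one inspects each coordinate $(i_0,k_0)$ at which some active $\underline{\mu}$ has $\mu_{i_0,k_0}=0$: containment in $F_{\underline{1}}$ forces a $\C$-linear cancellation among the unit leading coefficients of $a_{i_0}^{k_0}(s_{\underline{\mu}})$ taken over those $\underline{\mu}$ active at this coordinate. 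Because the partial order on $\N^P$ admits incomparable $\underline{\mu}$ sharing the same zero coordinates, the naive leading-term/minimality argument stalls, and one must bootstrap using the valuative independence condition (Def. \ref{def:valuativeindependenceatx}) together with the explicit description of the transition matrices between bases $s_i^{(k)}$ and $s_i^{(k')}$ from Remark \ref{rmk:choiceofbasiss_i^k} (a block-diagonal $GL(N(l),\C)$ factor plus a perturbation of the identity), which rigidly controls how valuations transform across different $O_k$'s and thereby closes the argument.
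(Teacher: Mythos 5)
Your approach is structurally different from the paper's, and the step you yourself flag as the ``main obstacle'' is not merely a stall but a genuine impasse: the $\C$-linear independence of the leading classes $\bar{s}_{\underline{\mu}}$ in $F_{\underline{0}}/tF_{\underline{0}}$ is false in general, so the dimension count cannot close. Your preliminary reductions are fine: $F_{\underline{0}}=\bigcap_k\sum_i R\,s_i^{(k)}$ is indeed a full-rank free $R$-lattice of rank $N(l)$, so $\dim_{\C}F_{\underline{0}}/tF_{\underline{0}}=N(l)$, and each nonzero graded piece with $\min\mu_{i,k}=0$ does contribute a nonzero class there. But the paper itself records, in the remark closing Section~\ref{sect:filtration}, that the total number $N$ of nonzero graded pieces with $\min\mu_{i,k}=0$ satisfies $N\geq N(l)$ and may be strictly larger than $N(l)$. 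When $N>N(l)$ there simply cannot be $N$ linearly independent classes in an $N(l)$-dimensional space, so the injectivity you need is structurally impossible, not just hard to prove; invoking valuative independence or the transition-matrix structure of Remark~\ref{rmk:choiceofbasiss_i^k} cannot repair a count that targets a space of the wrong size. The final sentence of your proposal (``\dots thereby closes the argument'') is thus a gesture, not an argument.

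The paper's proof does not touch the $R$-module structure at all. It quotes the uniform Lipschitz estimate of Lemma~\ref{lem:Lip} (a consequence of the equicontinuity of continuous psh potentials from \cite{Boucksomsemipositive}): for a fixed $l$, the piecewise-affine valuation function $x\mapsto val_x(s)$ is $Cl$-Lipschitz on each face of $Sk(X)$, with $C$ independent of the section $s$. Because some $\mu_{i_0,k_0}=0$, a representative $s$ has $val_x(s)$ controlled on $O_{k_0}$ by the fixed basis section $s_{i_0}^{(k_0)}$, and the Lipschitz estimate propagates this bound across the compact skeleton, constraining the entire profile $\mu_{i,k}=val(a_i^k)$ to a bounded range. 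If you want to pursue a dimension-count route, you would at minimum need to replace $F_{\underline{0}}/tF_{\underline{0}}$ by a larger target accommodating $N>N(l)$ classes, and justify why that larger space is still finite-dimensional --- at which point you are essentially re-proving the lemma.
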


	\begin{proof}
		Given an SNC model
		$\mathcal{X}\to S$, and some section $s$ representing a nonzero element of the graded piece,
		then $val_x(s)$ can be regarded as a function on $Sk(X)$, and its restriction to any face of $Sk(X)$ has a uniform Lipschitz estimate independent of $s\in H^0(X_K^{an}, lL)$ by Lemma \ref{lem:Lip}. The condition that $\min_{i,k} val(a_i^k)= \min_{i,k} \mu_{i,k}=0$ then implies that 
		\[
		\mu_{i,k} = val(a_i^k) \leq C,\quad \forall i,k,
		\]
		as required.
	\end{proof}

	\begin{cor}
		The $\C$-vector space $\bigoplus_{ \underline{\mu}:  \min_{i,k} \mu_{i,k}=0  } gr_{\underline{\mu}}$ is finite dimensional.
	\end{cor}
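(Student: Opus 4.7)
The plan is to reduce the claim to a finiteness-plus-boundedness count using the two structural facts already established: (i) each graded piece $gr_{\underline{\mu}}$ is at most one-dimensional over $\C$ (noted right after the definition of the filtration, since any two sections with the same valuative profile differ by a higher-order correction), and (ii) the uniform bound $\max_{i,k} \mu_{i,k} \leq C$ from the preceding lemma whenever $\min_{i,k} \mu_{i,k} = 0$ and $gr_{\underline{\mu}} \neq 0$.

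First I would observe that the multi-index $\underline{\mu} = (\mu_{i,k})$ ranges over a product of a finite set of coordinates: the index $i$ runs over $\{1, \ldots, N(l)\}$, which is finite, and the index $k$ runs over the connected components $O_k$ of $Sk(X) \setminus \mathcal{S}$. The latter set is finite because $\mathcal{S}$ is a finite union of codimension-one walls inside the compact polyhedral complex $Sk(X)$ (as recorded in the discussion preceding Lemma \ref{lem:valuativeindependence2}), and a compact piecewise-linear complex cut by finitely many walls has only finitely many complementary cells. Hence the number of pairs $(i,k)$ is finite, say equal to $M$.

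Next I would combine this with the lemma: the set of integer vectors $\underline{\mu} \in \Z^M$ with $\min_{i,k} \mu_{i,k} = 0$ and $\max_{i,k} \mu_{i,k} \leq C$ is contained in $\{0,1,\ldots,C\}^M$, which is manifestly finite. Restricting further to those $\underline{\mu}$ for which $gr_{\underline{\mu}}$ is nonzero only shrinks this set. Since each such graded piece is at most one-dimensional over $\C$ by fact (i), the direct sum
\[
\bigoplus_{\underline{\mu}:\,\min_{i,k}\mu_{i,k}=0} gr_{\underline{\mu}}
\]
is a $\C$-vector space of dimension at most $(C+1)^M$, and in particular is finite dimensional.

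There is no serious obstacle here; the only point requiring a moment of thought is the finiteness of the number of connected components $O_k$, which follows from the finiteness of $\mathcal{S}$ established earlier via the uniform Lipschitz estimate of Lemma \ref{lem:Lip} applied to the piecewise affine valuation functions. Everything else is bookkeeping on top of the preceding lemma and the at-most-one-dimensionality of each $gr_{\underline{\mu}}$.
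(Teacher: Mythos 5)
Your proof is correct and follows essentially the same route as the paper's: the preceding lemma bounds $\max_{i,k}\mu_{i,k}\leq C$ for any nonzero graded piece with $\min_{i,k}\mu_{i,k}=0$, so only finitely many integer multi-indices $\underline{\mu}$ contribute, and each $gr_{\underline{\mu}}$ is at most one-dimensional. The only difference is that you also spell out the finiteness of the index set $(i,k)$ (hence of the ambient lattice $\Z^M$), a point the paper leaves implicit, but this is a harmless elaboration rather than a different argument.
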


	\begin{proof}
		There are only a finite number of choices of integer vectors $\underline{\mu}$ satisfying $\min_{i,k} \mu_{i,k}=0$ such that the graded piece is nonzero, and each summand $gr_{\underline{\mu} }$ is at most one dimensional. 
	\end{proof}

	We take a $\C$-basis of each summand $gr_{\underline{\mu}}$ ranging over all choices of $\underline{\mu}$ with $\min_{i,k} \mu_{i,k}=0$. These can be lifted to a collection of meromorphic sections $S_1,\ldots S_N$ of $H^0(X, lL)$ with some finite radius of convergence.

	\begin{lem}\label{lem:decompositionfiltration}
		Any section $s\in F_\mu$ can be written as a linear combination
		\[
		s= \sum_1^N  f_i(t) S_i, \quad f_i(t)\in K,
		\] 
		such that all the summands $ f_i(t) S_i \in F_{\underline{\mu}}$.
		In particular, the sections $S_1,\ldots S_N$ generate $H^0(X_K^{an}, lL)$ as a $K$-vector space, so $N\geq N(l)=\dim H^0(X_t, lL)$.

		\end{lem}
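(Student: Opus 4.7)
\emph{Proof plan.} The plan is a $t$-adic successive approximation built on a single-step decomposition of $F_{\underline{\mu}}/F_{\underline{\mu}+\underline{1}}$ in terms of appropriate $t$-shifts of the $S_j$'s.

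I begin with two structural observations. First, $F_{\underline{\mu}}$ is contained in the free rank-$N(l)$ $R$-module $F_{\underline{\mu},k_0} := \{s : val(a_i^{k_0}) \geq \mu_{i,k_0},\ \forall i\}$ for any fixed component index $k_0$, so noetherianness of $R$ gives that $F_{\underline{\mu}}$ is finitely generated over $R$, hence torsion-free of finite rank, and in particular $t$-adically complete. Second, a direct check shows $t F_{\underline{\mu}} = F_{\underline{\mu}+\underline{1}}$. Combined with the periodicity $gr_{\underline{\mu}+\underline{1}} \simeq gr_{\underline{\mu}}$ and the at-most one-dimensionality of graded pieces, every nonzero graded piece is of the form $gr_{\underline{\mu}^{(j)}+d\underline{1}}$ for some $j \in \{1,\ldots,N\}$ and $d\in\Z$, where $\underline{\mu}^{(j)}$ (with $\min_{i,k}\mu^{(j)}_{i,k}=0$) is the index whose graded piece $S_j$ represents; this piece is generated by the class of $t^d S_j$.

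The key single-step claim is that the images of $\{t^{d_j(\underline{\mu})} S_j\}_{j=1}^N$, where $d_j(\underline{\mu}) := \max_{i,k}(\mu_{i,k} - \mu^{(j)}_{i,k})$ is the smallest integer $d$ for which $t^d S_j \in F_{\underline{\mu}}$, span $F_{\underline{\mu}}/F_{\underline{\mu}+\underline{1}}$ as a $\C$-vector space. To prove this I filter the quotient by the finitely many subspaces $F_{\underline{\mu}'}/F_{\underline{\mu}+\underline{1}}$ with $\underline{\mu} \leq \underline{\mu}' \leq \underline{\mu}+\underline{1}$; its associated graded is $\bigoplus_{\underline{\mu} \leq \underline{\mu}' < \underline{\mu}+\underline{1}} gr_{\underline{\mu}'}$. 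A short combinatorial check shows that the conditions $\underline{\mu}^{(j)}+d\underline{1} \geq \underline{\mu}$ and $\underline{\mu}^{(j)}+d\underline{1} < \underline{\mu}+\underline{1}$ together force $d=d_j(\underline{\mu})$, so the only nonzero summands correspond to $\underline{\mu}' = \underline{\mu}^{(j)}+d_j(\underline{\mu})\underline{1}$, each generated by the class of $t^{d_j(\underline{\mu})} S_j$. Spanning the associated graded of a finite filtration then yields spanning of the quotient.

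Given the single-step decomposition, the iteration is standard: set $s^{(0)}:=s$ and at each stage choose $c_j^{(n)}\in\C$ so that $s^{(n+1)} := s^{(n)} - \sum_j c_j^{(n)} t^{d_j(\underline{\mu})+n} S_j \in F_{\underline{\mu}+(n+1)\underline{1}} = t^{n+1} F_{\underline{\mu}}$, using the single-step claim at level $\underline{\mu}+n\underline{1}$ together with $d_j(\underline{\mu}+n\underline{1}) = d_j(\underline{\mu})+n$. Setting $f_j(t) := t^{d_j(\underline{\mu})}\sum_{n\geq 0} c_j^{(n)} t^n \in t^{d_j(\underline{\mu})} R \subset K$, the series converges by $t$-adic completeness of $F_{\underline{\mu}}$ and gives $s = \sum_j f_j(t) S_j$. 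Each summand lies in $F_{\underline{\mu}}$ because $f_j(t)$ factors as $g_j(t)\cdot t^{d_j(\underline{\mu})}$ with $g_j \in R$, $t^{d_j(\underline{\mu})} S_j \in F_{\underline{\mu}}$ by construction, and $F_{\underline{\mu}}$ is an $R$-module. The closing assertion $N\geq N(l)$ is then immediate, since the $\{S_j\}$ span the $N(l)$-dimensional $K$-vector space $H^0(X_K^{an},lL)$.

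The principal obstacle is establishing the single-step claim: one must identify precisely which graded pieces $gr_{\underline{\mu}'}$ contribute to $F_{\underline{\mu}}/F_{\underline{\mu}+\underline{1}}$ and match them with the shifted sections $t^{d_j(\underline{\mu})}S_j$. This is where the at-most one-dimensionality of each $gr_{\underline{\mu}}$ and the rigidity of the $\Z\underline{1}$-orbit structure on the filtration indices are essential.
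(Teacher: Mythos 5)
Your proposal is correct and takes essentially the same approach as the paper: normalise via the periodicity action, peel off the one--dimensional graded piece using the lifted basis $S_j$, and iterate $t$-adically. The paper compresses the iteration into ``within at most $N$ steps we reach $tF_{\underline{\mu}}$'' followed by ``induction on degree,'' while you make the same scheme explicit by isolating the single-step spanning of $F_{\underline{\mu}}/F_{\underline{\mu}+\underline{1}}$ and then invoking finite generation over $R$ and $t$-adic completeness of $F_{\underline{\mu}}$ to justify the convergence of the resulting Laurent series $f_j$. One small imprecision worth noting: your claim that the associated graded of the multi-filtration of $F_{\underline{\mu}}/F_{\underline{\mu}+\underline{1}}$ is literally $\bigoplus_{\underline{\mu}\le\underline{\mu}'<\underline{\mu}+\underline{1}} gr_{\underline{\mu}'}$ is not quite right as stated --- for a filtration indexed by a poset, the consecutive quotients along a linear refinement are in general only quotients of the $gr_{\underline{\mu}'}$, since the denominators $\sum_j F_{\underline{\mu}'+e_j}$ can poke outside the box $[\underline{\mu},\underline{\mu}+\underline{1}]$. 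This does not affect the argument, because a surjection from $gr_{\underline{\mu}'}$ onto the consecutive quotient is all you need for spanning, and each $gr_{\underline{\mu}'}$ is already at most one-dimensional and is generated by $t^{d_j(\underline{\mu})}S_j$ under the periodicity identification; but the phrasing should be softened to ``surjects onto'' rather than ``is.'' The paper's own proof is terser at precisely this point (``within at most $N$ steps''), so the level of rigour is comparable.
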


		\begin{proof}
			Without loss $gr_{\underline{\mu}}$ is nonzero, for otherwise $s$ can be decomposed into a finite linear combination with summands belonging to a strictly higher filtered piece. But by the periodicity action, there is some $\underline{\mu}'= \underline{\mu}+ a\underline{1}$, with $\min_{i,k} \mu'_{i,k}=0$, and $a\in \Z$. We can subtract $t^{-a}$ times a suitable multiple of the basis section  $S_i\in gr_{\underline{\mu}'}$, to improve $s$ to a higher filtered piece. Within at most $N$ steps, we can reduce $s$ to the higher filtration $tF_{\underline{\mu}}$. The result then follows from induction on degree.
		\end{proof}

		\begin{rmk}
			If $s$ is a meromorphic section, then we can assume $f_i(t)$ is meromorphic for small nonzero values of $t$. 
		\end{rmk}

		\begin{rmk}
			Suppose  there exists sections $\theta_1,\ldots \theta_{N(l)}$ satisfying Def. \ref{Def:valuativeindependence}. Then we can choose $s_i^{(k)}$ to be $\theta_i$, for all choices of connected components $O_k$ of $Sk(X)\setminus \mathcal{S}$. Then  $\theta_1,\ldots \theta_{N(l)}$ gives a basis of the $\C$-vector space $\bigoplus_{ \underline{\mu}:  \min_{i,k} \mu_{i,k}=0  } gr_{\underline{\mu}}$. In particular $N=N(l)=\dim H^0(X_t, lL)$. The moral is that $S_1,\ldots S_N$ is a weaker substitute for such `canonical basis'  $\theta_1,\ldots \theta_{N(l)}$, with the caveat that $N$ may be in general bigger than $N(l)$.
		\end{rmk}

		\subsection{Sections with valuative constraints}\label{sect:valuativeconstraints}

		We now relate the non-archimedean notion of valuative independence, to the algebraic geometry of sections $s\in H^0(X_t, l L)$, for $t$ sufficiently small. Given any connected component $O_k$, Prop. \ref{prop:localvaluativeindependence} provides the basis sections $s_1^{(k)},\ldots, s_{N(l)}^{(k)}$, which can be assumed to converge for small finite values of $t$, and define a basis of $H^0(X_t, lL)$.  We can expand $s$ as a linear combination of the sections in $H^0(X_t, lL)$,
		\[
		s= \sum_{i=1}^{N(l)} a_i^k s_i^{(k)}, \quad a_i^k\in \C.
		\]
		A natural way to measure the magnitude of $s$ is to impose bounds on $|a_i^k|$. 
		We will use big $O$ notation to denote some estimable constants which are independent of small enough $t$. 
		We are interested in the system of valuative constraints
		\begin{equation}\label{eqn:linearinequality}
			|a_i^k|=O( |t|^{\mu_{i,k}} ), \quad \forall i, k,
		\end{equation}
		where $\mu_{i,k}\in \R$ are prescribed exponents. 

		We begin with a general lemma on such linear inequalities.

		\begin{lem}\label{lem:linearinequality1}
			Let $b_{ij}$ be prescribed meromorphic series of $t$, and $\mu_{i}\in \R$ be prescribed exponents. For small values of $t$, consider the system of linear inequalities
			\[
			\sum_{j=1}^J b_{ij} x_j = O( |t|^{\mu_{i}} ), \quad i=1,\ldots I.
			\]
			There exists an invertible matrix in $B\in GL(J, \C[\![t]\!])$ whose entries are represented by power series of $t$ with finite radius of convergence, such that the system is equivalent to the new system for $x'= Bx$:
			\[
			x_i'= O(|t|^{\mu_i'}), \quad i=1,\ldots I',\quad I'\leq \min(I, J).
			\]
			Here the set of exponents $\{ \mu_1',\ldots \mu_{I'}'  \}\subset \{ \mu_1,\ldots \mu_I \}+\Z$. 
		\end{lem}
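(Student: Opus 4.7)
The plan is to prove the lemma by induction on $J$, via a Gaussian-elimination-type reduction on the columns of $(b_{ij})$ adapted to the $t$-adic valuation; the ultrametric property is what allows bounds to propagate consistently when we eliminate one variable at a time.

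If the first row of $(b_{ij})$ is identically zero, the first inequality is vacuous and we apply induction to the $(I-1) \times J$ subsystem. Otherwise, pick $j_0$ minimising $val(b_{1j})$ over row $1$, and permute columns to bring column $j_0$ to position $1$ (a permutation is automatically in $GL(J, \C[\![t]\!])$). For each $j \geq 2$, subtract $(b_{1j}/b_{11})$ times column $1$ from column $j$; by the choice of $j_0$, $val(b_{1j}/b_{11}) \geq 0$, so this elementary operation lies in $GL(J, \C[\![t]\!])$. After these operations the first row reads $b_{11} x_1' = O(|t|^{\mu_1})$, which decouples as $x_1' = O(|t|^{\mu_1 - val(b_{11})})$ with exponent in $\mu_1 + \Z$.

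For each $i \geq 2$, the $i$-th inequality now reads $b_{i1} x_1' + \sum_{j \geq 2} b_{ij}' x_j' = O(|t|^{\mu_i})$, where $b_{ij}' = b_{ij} - (b_{1j}/b_{11}) b_{i1}$ is still meromorphic. Combined with the decoupled bound on $x_1'$ and the ultrametric inequality, this is equivalent to
\[
\sum_{j \geq 2} b_{ij}' x_j' = O(|t|^{\tilde{\mu}_i}), \qquad \tilde{\mu}_i = \min\bigl(\mu_i,\, val(b_{i1}) + \mu_1 - val(b_{11})\bigr) \in \{\mu_i, \mu_1\} + \Z,
\]
with equivalence in both directions checked by subtracting or adding back the bounded term $b_{i1} x_1'$. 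Applying the inductive hypothesis to this reduced $(I-1) \times (J-1)$ system yields $V' \in GL(J-1, \C[\![t]\!])$ and $I'' \leq \min(I-1, J-1)$ further decoupled inequalities with exponents in $\{\tilde{\mu}_2, \ldots, \tilde{\mu}_I\} + \Z \subset \{\mu_1, \ldots, \mu_I\} + \Z$. Composing with the column permutation and the elimination matrix gives the full transformation $B \in GL(J, \C[\![t]\!])$, with $I' = 1 + I'' \leq \min(I, J)$; convergence of the entries of $B$ is preserved throughout since we only perform rational operations on convergent series. The whole argument is bookkeeping of the ultrametric minima rather than any substantive analytic estimate.
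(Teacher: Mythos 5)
Your column-elimination approach (inducting on $J$ and decoupling one variable at a time by a change of variables) is genuinely different from the paper's row-reduction approach, but the key reduction step contains a real gap.

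The problem is in the claimed two-sided equivalence when you ``subtract or add back the bounded term $b_{i1}x_1'$.'' Given the decoupled bound $x_1' = O(|t|^{\mu_1-val(b_{11})})$, the term $b_{i1}x_1'$ is only $O(|t|^{val(b_{i1})+\mu_1-val(b_{11})})$. The direction ``original $\Rightarrow$ reduced'' is fine and gives $\tilde\mu_i = \min\bigl(\mu_i,\, val(b_{i1})+\mu_1-val(b_{11})\bigr)$. But in the reverse direction, adding $b_{i1}x_1'$ back to $\sum_{j\geq 2}b_{ij}'x_j' = O(|t|^{\tilde\mu_i})$ only recovers $b_{i1}x_1' + \sum_{j\geq 2}b_{ij}'x_j' = O(|t|^{\tilde\mu_i})$, which is strictly weaker than $O(|t|^{\mu_i})$ precisely when $val(b_{i1})+\mu_1-val(b_{11}) < \mu_i$. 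In that regime the $i$-th original constraint encodes a nontrivial cancellation between $b_{i1}x_1'$ and the remaining terms, and your reduced system has forgotten it. Minimal counterexample: $I=J=2$, $b_{11}=1$, $b_{12}=0$, $b_{21}=1$, $b_{22}=1$, $\mu_1=0$, $\mu_2=5$. The system is $x_1=O(1)$, $x_1+x_2=O(|t|^5)$; your pivot is $j_0=1$ with no actual elimination, $\tilde\mu_2=\min(5,0)=0$, so your reduced system is $x_1=O(1)$, $x_2=O(1)$, which admits $(x_1,x_2)=(1,1)$, not a solution of the original. (The correct $B$ here is $B_{11}=1,B_{12}=0,B_{21}=1,B_{22}=1$, giving $x_1'=O(1)$, $x_2'=O(|t|^5)$.)

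The paper's proof avoids this by working purely with row operations and, crucially, by \emph{ordering the rows in decreasing order of $\mu_i$}: it only ever subtracts a row with larger $\mu$ from a row with smaller $\mu$, so the $O$-absorption always goes in the safe direction and the equivalence is never in doubt, and the output matrix $B$ is assembled by completing the surviving linearly independent rows to a basis. Your argument always pivots on row $1$ regardless of the sizes of the $\mu_i$, which is exactly what breaks. It could plausibly be repaired by choosing the pivot pair $(i_0,j_0)$ to maximize $\mu_{i_0}-val(b_{i_0 j_0})$ over all $(i,j)$ with $b_{ij}\neq 0$ (this simultaneously forces $val(b_{i_0 j_0})\leq val(b_{i_0 j})$ for the column-elimination to stay in $GL(J,\C[\![t]\!])$ and forces $\tilde\mu_i=\mu_i$ for every $i$), but as written the induction step is not valid.
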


		\begin{proof}
			(Sketch) The strategy is to simplify the system by row transformations using matrices in $GL(I,\C)$, and multiplying rows by  powers of $t$.

			By dividing each row with a suitable power of $t$, without loss $b_{ij}$ are all power series of $t$ (namely there is no negative power term), and each leading order row vector  $b_{ij}(0)$ is a nonzero vector. We order the rows in the decreasing order of $\mu_{i}$.

			Similar to Gaussian elimination, we can subtract $\C$-linear combinations of previous rows from latter rows; the point is that $O(|t|^\mu)$ can be absorbed into $O(|t|^{\mu'})$ if $\mu\geq \mu'$. Repeating this process, we will reach the situation that the first $I''\leq I$ row vectors $b_{ij}(0)$ are linearly independent, and all the other rows have $b_{ij}(0)=0$, namely $b_{ij}$ is divisible by $t$ as a power series. We then divide out $t$, reorder the rows in decreasing order of $\mu_i$, and continue the process; the exponents $\mu_i$ will decrease by positive integers.

			Eventually, the system takes the following form: the first $I'\leq I$ rows of the matrix $(b_{ij}(0))\in Mat_{I\times J}(\C)$ are linearly independent, while the last $I-I'$ row vectors $b_{ij}$ are linear combinations of the first $I'$ rows, over the power series ring. We can then subtract the linear combinations, to make $b_{ij}=0$ for the last $I-I'$ rows. The first $I'$ rows can be completed into the required invertible matrix $B$.
		\end{proof}

		\begin{rmk}
			The construction of the matrix $B$ depends on $\mu_i$ only through the ordering of the rows. Even though $\mu_i$ are continuous parameters, different choices of $\mu_i$ only result in a finite number of different choices for $B$. 
		\end{rmk}

		\begin{cor}\label{cor:linearinequality2}
			Let $b_{ij}$ be prescribed meromorphic series of $t$, and $\mu_{i}\in \R$ be prescribed exponents. For small values of $t$, consider the system of linear inequalities
			\[
			\begin{cases}
				\sum_{j=1}^J b_{ij} x_j \leq C |t|^{\mu_{i}} , \quad i=1,\ldots I,
				\\
				\sum c_j   x_j= 1.
			\end{cases}
			\]
			There is some small $t_0>0$ depending on $b_{ij}$ but not on $c_j, \mu_i$, such that the following holds. 
			If there is some $0< |t'|\leq t_0$, such that the system admits some solution, 
			then there exist meromorphic functions $f_j(t)$ and some $\nu\in \R$, such that $x_j(t)= |t|^\nu f_j(t)$ satisfies the following:
			\[
			\begin{cases}
				|	\sum_{j=1}^J b_{ij}(t) |t|^\nu f_j (t)  |\leq C |t|^{\mu_{i}} , \quad i=1,\ldots I, \quad \forall 0<|t|\leq t_0. 
				\\
				\sum_j c_j  |t'|^\nu f_j(t')= 1.
			\end{cases}
			\]
			Here the constant $C\geq 1$ depends on the matrix $(b_{ij})$, but does not depend on $t,\mu_i, c_j$.

		\end{cor}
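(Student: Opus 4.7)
The plan is to apply Lemma \ref{lem:linearinequality1} to simplify the inequality system, and then construct an explicit solution of the transformed system supported on a single coordinate chosen adaptively from the hypothesised solution at $t'$.

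First, apply Lemma \ref{lem:linearinequality1} to produce $B \in GL(J, \C[\![t]\!])$ with a common radius of convergence $t_0$, an integer $I' \leq \min(I, J)$, and exponents $\mu_i' \in \{\mu_1, \ldots, \mu_I\} + \Z$ for $i \leq I'$, such that the original inequalities are equivalent (with a constant depending only on $(b_{ij})$) to $|x_i'(t)| \leq C_1 |t|^{\mu_i'}$ for $i \leq I'$, where $x' = Bx$. From the construction in the Lemma's proof, the matrix $bB^{-1}$ has vanishing columns in positions $> I'$, so $\sum_j b_{ij}(t) x_j(t)$ depends only on the first $I'$ coordinates of $x'(t)$. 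Set $\tilde{c}(t) := cB^{-1}(t)$, so the normalisation $\sum_j c_j x_j = 1$ becomes $\sum_i \tilde{c}_i(t) x_i'(t) = 1$; transforming the solution $x^*$ at $t'$ yields $x^{*\prime} = B(t')x^*$ with $|x_i^{*\prime}| \leq C_1|t'|^{\mu_i'}$ for $i \leq I'$ and $\sum_i \tilde{c}_i(t') x_i^{*\prime} = 1$.

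If there exists $i_0 > I'$ with $\tilde{c}_{i_0}(t') \neq 0$, set $\nu = 0$, $x_{i_0}'(t) = 1/\tilde{c}_{i_0}(t')$, and $x_k'(t) = 0$ for $k \neq i_0$; then $\sum_j b_{ij}(t) x_j(t) \equiv 0$ (only $x_{i_0}'$ is nonzero and its index is $> I'$), the inequalities hold trivially, the normalisation at $t'$ is built in, and $f_j(t) = (B^{-1}(t))_{j,i_0}/\tilde{c}_{i_0}(t')$ is meromorphic. Otherwise all $\tilde{c}_i(t') = 0$ for $i > I'$, and the normalisation $\sum_{i \leq I'} \tilde{c}_i(t') x_i^{*\prime} = 1$ combined with $|x_i^{*\prime}| \leq C_1|t'|^{\mu_i'}$ yields the dual estimate $\max_{i \leq I'} |\tilde{c}_i(t')| |t'|^{\mu_i'} \geq 1/(I'C_1)$. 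Pick $i_0 \leq I'$ attaining this maximum, take $\nu = \mu_{i_0}'$, $x_{i_0}'(t) = |t|^\nu g_{i_0}$ with constant $g_{i_0} := 1/(|t'|^\nu \tilde{c}_{i_0}(t'))$ (so $|g_{i_0}| \leq I'C_1$), and $x_k'(t) = 0$ otherwise; then $|x_{i_0}'(t)| \leq I'C_1 |t|^{\mu_{i_0}'}$, the original inequalities hold via the Lemma's equivalence with a constant depending only on $(b_{ij})$, the normalisation at $t'$ follows from $\tilde{c}_{i_0}(t') x_{i_0}'(t') = 1$, and $f_j(t) = (B^{-1}(t))_{j,i_0}\, g_{i_0}$ is meromorphic.

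The main obstacle is uniformity of the constant $C$ in $c_j$, $\mu_i$, and $t'$. The first case evades it by arranging for the inequality's left-hand side to vanish identically. The second case hinges on the dual solvability estimate — which uses the existence of $x^*$ and bounds $g_{i_0}$ uniformly in terms of $(b_{ij})$ and $I'$ alone — together with the adaptive choice $\nu = \mu_{i_0}'$; a global choice such as $\nu = \max_i \mu_i'$ would incur a $t'$-dependent loss whenever the fractional part of $\mu_{i_0}' - \nu$ is nonzero, so tailoring $\nu$ to the specific dominant index $i_0$ is essential.
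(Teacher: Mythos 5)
Your proposal is correct and follows essentially the same strategy as the paper: reduce to the simplified system via Lemma \ref{lem:linearinequality1}, then split into the case where the normalisation functional sees an unconstrained transformed coordinate (solve trivially there) and the case where it is supported on the constrained coordinates (derive the dual lower bound $\max_{i\leq I'}|\tilde c_i(t')|\,|t'|^{\mu_i'}\gtrsim 1$ from the hypothesised solution and concentrate on the maximising index). The only cosmetic difference is in the parametrisation of $\nu$: the paper takes $|t'|^{-\nu}=C_0|\tilde c_{i_0}(t')|$, while you take $\nu=\mu_{i_0}'$ and fold the slack into a bounded constant $g_{i_0}$; both give the required uniform constant. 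You also spell out more carefully than the paper does why $bB^{-1}$ annihilates the unconstrained columns (the reduced matrix has this property and the row operations are invertible over the Laurent field), which is a useful clarification.
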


		\begin{proof}
			In terms of the new variables $x'= Bx$ provided by Lemma \ref{lem:linearinequality1}, the system reads 
			\[
			\begin{cases}
				x_i' = O( |t|^{\mu_{i}'}) , \quad i=1,\ldots I',
				\\
				\sum_{j,k} c_j B_{jk}(t')  x_k' (t') = 1.
			\end{cases}
			\]
			If there is some $k> J-I'$, such that $\sum c_j B_{jk}(t')\neq 0$, then we can set the other entries of $x'$ to zero, and take the ansatz $x_k'(t)= c |t|^\nu $, and define $\nu$ by
			\[
			|t'|^{-\nu}= | \sum c_j B_{jk}(t')|.
			\] 
			Then $x'(t)$ solves the system for some choice of $c\in U(1)$.
			Thus we may assume $J=I'$ so there is no `unconstrained variable'.

			The existence of solution at $t=t'$ implies that 
			\[
			\max_k (  |t'|^{\mu_k'}  	|\sum_{j} c_j B_{jk}(t') | )   \geq C^{-1}\max_k |\sum_j c_j B_{jk}(t')   x_k'| \geq   C^{-1}  | \sum_{j,k} c_j B_{jk}(t') x_k' |=  C_0^{-1}.
			\]
			The maximum is achieved for some $k$. We now set all the other enties of $x'$ to zero, and take the ansatz $x_{k}'(t)= c C_0 |t|^\nu$, and define $\nu$ by
			\[
			|t'|^{-\nu}=C_0 | \sum c_j B_{jk}(t')|.
			\] 
			Thus $|t'|^{\mu_k'- \nu} \geq 1$, whence $\mu_k'\leq \nu$. Then $x'(t)$ solves the system for some $c\in U(1)$. The solution translates back to $x_j(t)= B^{-1}_{jk}(t) x_k'(t)= cC_0|t|^\nu B^{-1}_{jk}(t)$, so $f_j(t)= cC_0 B^{-1}_{jk}(t)$ is up to a bounded constant factor equal to a column of $B^{-1}(t)$. 
		\end{proof}

		We now apply the general theory of linear inequalities to the system of valuative constraints (\ref{eqn:linearinequality}) on sections. 	Recall that in Section \ref{sect:filtration} we constructed a finite collection of meromorphic sections $S_1,\ldots S_N$ in $H^0(X,lL)$.

		\begin{cor}(`From fibre to family')\label{cor:fromfibretofamily}
			There is a small enough $t_0>0$ such that the following holds. Suppose for some $t'$ with $0<|t'|\leq t_0$, there is some section $s'\in H^0(X_{t'}, lL)$ satisfying the valuative constraints 
			\[
			s'= \sum a_i^{k'} s_i^{(k)},\quad 	|a_i^{k'}|\leq C|t'|^{\mu_{i,k}} , \quad \forall i, k.
			\]
			Let $z\in X_{t'}$ be an arbitrary prescribed point.

			Then there is some meromorphic section $\tilde{s}$ of the line bundle $L^{\otimes l}\to X$ holomorphic over a small punctured disc, and some $\nu \in \R$, such that for all $t$ with $0<|t|\leq t_0$, the restriction of $ \tilde{s}$ to $X_{t}$ satisfies the valuative constraints 
			\[
			\tilde{s}=  \sum a_i^k s_i^{(k)}     ,\quad 	|a_i^k|(t)\leq C|t|^{\mu_{i,k}-\nu} , \quad \forall i, k,
			\]
			and at the prescribed point $z\in X_{t'}$ we have $|t'|^\nu\tilde{s}(z)=s'(z)$.

			Furthermore, we can require $\tilde{s}= cS_\alpha$ for some $ |c|\leq C$, and some $S_\alpha$ from the collection. The constants $C$ are uniform for small $t$, the choice of $\mu_{i,k}$, and the point $z\in X_{t'}$.
		\end{cor}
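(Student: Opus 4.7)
The approach I would take is to use the collection $S_1,\ldots,S_N$ from Section \ref{sect:filtration} as a family of candidate lifts, and select one using the linear programming machinery of Corollary \ref{cor:linearinequality2}. First, I would set up the ansatz $\tilde{s} = \sum_{j=1}^N f_j(t)\, S_j$ with meromorphic coefficients $f_j$ to be determined, and expand each $S_j|_{X_t} = \sum_i b_{ij}^{(k)}(t)\, s_i^{(k)}$ on every connected component $O_k \subset Sk(X) \setminus \mathcal{S}$, where the $b_{ij}^{(k)}$ are fixed meromorphic functions of $t$ that depend only on the chosen model data. The valuative constraints on $\tilde{s}|_{X_t}$ then translate into a linear system
\[
\Bigl|\sum_j b_{ij}^{(k)}(t)\, f_j(t) \Bigr| \leq C|t|^{\mu_{i,k}-\nu}, \qquad \forall i,k,
\]
while the pointwise normalisation $|t'|^\nu \tilde{s}(z) = s'(z)$ yields a single affine constraint of the form $\sum_j c_j(|t|^\nu f_j)(t') = 1$ after dividing through by $s'(z)$ (the trivial case $s'(z)=0$ is handled separately by $\tilde{s} \equiv 0$).

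Next, I would verify the hypothesis of Corollary \ref{cor:linearinequality2} by exhibiting a solution at $t=t'$: applying Lemma \ref{lem:decompositionfiltration} to $s'$ viewed through its valuative constraints as living in an appropriate filtered piece $F_{\underline{\mu}^*}$ yields a decomposition $s'|_{X_{t'}} = \sum_j f_j(t')\, S_j|_{X_{t'}}$ in which each summand respects the filtration level, hence realises the constraint system at $t=t'$. Corollary \ref{cor:linearinequality2} then produces meromorphic functions $f_j(t) = c\, C_0\, B^{-1}_{jk}(t)$ and some $\nu \in \R$ solving the full system uniformly for $0 < |t| \leq t_0$, with $|c| \leq 1$.

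The final step, and the main obstacle, is to refine the resulting $\tilde{s}$, which is a $t$-dependent linear combination of the $S_j$, to a scalar multiple of a single $S_\alpha$ from the collection. My plan is to exploit the fact that each $S_j$ is a lift of a generator of a one-dimensional graded piece $gr_{\underline{\mu}^{(j)}}$: among the indices $j_0$ for which $|f_{j_0}(t')|\cdot |S_{j_0}(z)|_{t=t'} \geq N^{-1}|s'(z)|$ (which exists by the pigeonhole principle applied to the pointwise decomposition of $s'(z)$), one sets $\alpha = j_0$, $\nu = val(f_{j_0})$, and $c = s'(z)/(|t'|^\nu\, S_{j_0}(z)|_{t=t'})$; the filtration membership $f_{j_0}S_{j_0} \in F_{\underline{\mu}^*}$ then directly implies the valuative constraints for $cS_\alpha$ uniformly in $t$, while the pigeonhole bound gives $|c| \leq N$. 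The hard part will be controlling the constant $C$ uniformly in $z$, $\mu_{i,k}$, and small $t$: this requires that one only encounter finitely many distinct leading-order profiles (modulo the periodicity action of $t$ on the graded pieces) and that the piecewise-linear valuation functions on $Sk(X)$ admit a uniform Lipschitz bound, both of which are supplied by Lemma \ref{lem:Lip} combined with the finite-dimensionality of $\bigoplus_{\min \mu_{i,k}=0} gr_{\underline{\mu}}$ established in Section \ref{sect:filtration}.
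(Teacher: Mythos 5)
Your overall strategy — reduce to the linear-inequality framework of Corollary~\ref{cor:linearinequality2}, then pigeonhole to select a single $S_\alpha$ — is aligned with the paper's, but there are two concrete problems, one of which is a genuine gap.

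First, a structural issue: you propose verifying the hypothesis of Corollary~\ref{cor:linearinequality2} by applying Lemma~\ref{lem:decompositionfiltration} to $s'$. But $s' \in H^0(X_{t'}, lL)$ is a section on a \emph{single fibre}, not an element of $H^0(X_K^{an}, lL)$, so it cannot be placed in a filtered piece $F_{\underline{\mu}^*}$ and the lemma simply does not apply to it. In fact this step is unnecessary: the hypothesis of Corollary~\ref{cor:linearinequality2} only asks for \emph{some} solution at $t=t'$, and the constraints in your system are on the $a_i^k = \sum_j b_{ij}^{(k)} f_j$ only, so surjectivity of the map $(f_j) \mapsto (a_i^k)$ suffices — the existence of $s'$ with its prescribed coefficients already provides this. (The paper avoids the issue entirely by taking the $a_i^{k_0}$ themselves as the unknowns.)

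The serious gap is in the final pigeonhole step. You assert that the output $f_{j_0}(t)S_{j_0}$ of Corollary~\ref{cor:linearinequality2} lies in the filtered piece $F_{\underline{\mu}^*}$, and use this to conclude the valuative constraints for $cS_\alpha$. But Corollary~\ref{cor:linearinequality2} only controls the \emph{sum} $\sum_j f_j(t) S_j$ — it says nothing about the individual summands $f_j(t)S_j$ satisfying the constraints. The coefficients $f_j$ it produces are entries of a column of some $B^{-1}(t)$, and generically a single term $f_{j_0}S_{j_0}$ will \emph{not} lie in the filtration even if the sum does (cancellations are precisely what the filtration is sensitive to). What the paper does instead is: first apply Corollary~\ref{cor:linearinequality2} to produce a single meromorphic $K$-section $\tilde{s}_{old}$ satisfying the constraints, note that this section then actually lies in $F_{\underline{\mu}'}$, and \emph{then} apply Lemma~\ref{lem:decompositionfiltration} to $\tilde{s}_{old}$ — which is now a legitimate $K$-section — to obtain a decomposition $\tilde{s}_{old} = \sum f_\alpha(t) S_\alpha$ in which each summand is guaranteed to lie in $F_{\underline{\mu}'}$. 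That summand-wise filtration membership is exactly the property your pigeonhole step needs, and it is obtained from Lemma~\ref{lem:decompositionfiltration} applied downstream of the linear-inequality machinery, not upstream as you propose. Without that intermediate rearrangement, the claim that $cS_\alpha$ satisfies the valuative constraints is unjustified.
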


		\begin{proof}
			For different choices of $k,k'$, the transformation matrix between the two $K$-basis $s_1^{(k)},\ldots, s_{N(l)}^{(k)}$ and $s_1^{(k')},\ldots, s_{N(l)}^{(k')}$ is a matrix whose entries are meromorphic series of $t$. Thus for $k\neq k_0$, the $a_i^k$ are linear expressions of $a_i^{k_0}$. The constraint (\ref{eqn:linearinequality}) can be thus viewed as a system of linear inequalities on the unknown $a_i^{k_0}$, of the general form $\sum b_{ij}x_j= O(|t|^{\mu_i})$.

			The evaluation $s(z)$ can be viewed as a linear functional on $s$, so prescribing a nonzero value $s(z)$ amounts to solving an equation $\sum c_i a_i^{(k_0)}=1$. By 
			Cor. \ref{cor:linearinequality2}, we can find some meromorphic section $\tilde{s}_{old}$ and $\tilde{\nu}\in \R$, satisfying the valuative constraints
			\[
				\tilde{s}_{old}=  \sum \tilde{a}_i^k s_i^{(k)}     ,\quad 	|\tilde{a}_i^k|(t)\leq C|t|^{\mu_{i,k}-\tilde{\nu}} , \quad \forall i, k, \quad \forall 0<|t|\leq t_0,
			\]
		and the prescribed evaluation $|t'|^{\tilde{\nu}} \tilde{s}_{old}(z)= s'(z)$.

			The last part of the proof of Cor. \ref{cor:linearinequality2} gives a little more information about $\tilde{s}_{old}$. Up to a bounded constant factor, it agrees with a column vector for some fixed matrix $B^{-1}(t)$. The matrix $B(t)$ is determined by the fixed choice of the sections $s_i^{(k)}$ for all the connected components $O_k\subset Sk(X)\setminus \mathcal{S}$, together with a choice of ordering of the rows (coming from the decreasing order condition for the exponents $\mu_i$). The upshot is that up to the bounded constant factor, $\tilde{s}_{old}$ can be required to belong to a \emph{finite set} of meromorphic sections.

			By the valuative constraints on $\tilde{s}_{old}$, 	as an element of $H^0(X_K^{an}, lL)$, it belongs to the filtration $F_{\underline{\mu'}}$, where the integer grading
			$
			\mu_{i,k}'= \lceil   \mu_{i,k}- \tilde{\nu}  \rceil.
			$
			Applying Lemma \ref{lem:decompositionfiltration}, we can write
			\[
			\tilde{s}_{old}= \sum_1^N f_\alpha(t) S_\alpha,
			\]
			such that all the summands lie in the filtration $F_{\underline{\mu'}}$, and $f_\alpha(t)= t^{n_\alpha} h_\alpha(t)$ are meromorphic for small $t\neq 0$, such that $n_\alpha\in \Z$, and $h_\alpha(t)$ is a power series with nonzero constant term, unless it is identically zero.

			At the point $z\in X_{t'}$, the triangle inequality gives
			\[
			|\tilde{s}_{old}(z)| \leq C\max_{1\leq \alpha\leq N} |f_\alpha(t') S_\alpha(z)| \leq C \max_{h_\alpha\neq 0} |t'|^{n_\alpha} |S_\alpha(z)|.
			\]
			The modulus here is measured with respect to any fixed choice of local trivialisation of $L$; the choice does not matter since both sides cancel out. Since $\tilde{s}_{old}$ belongs to a finite set up to bounded constant, the constant $C$ is uniform. We take some $1\leq \alpha\leq N$ achieving this maximum.

			We take the ansatz
			\[
			\tilde{s}= cS_\alpha, \quad \nu= \tilde{\nu}+ n_\alpha,\quad  c\in \C, \quad	|c| |t'|^{n_\alpha} |S_\alpha(z)| =  |\tilde{s}_{old}(z)|.
			\]
			By the above discussion $|c|\leq C$. Thus
			\[
		|t'|^\nu |\tilde{s}(z)|=  |t'|^\nu |c| |S_\alpha(z)|= |t'|^{\tilde{\nu}} |\tilde{s}_{old}(z)| =  |s'(z)|, 
			\]
			hence by choosing the phase factor of $c$ suitably, we achieve the point evaluation $	|t'|^\nu \tilde{s}(z)= s'(z)$. Moreover, 
		since $t^{n_\alpha }S_\alpha$ belongs to the filtration $F_{\underline{\mu}'}$, the valuative constraint on $\tilde{s}$ is satisfied.  
		\end{proof}

		\subsection{Examples}

		The existence of $\theta_1,\ldots \theta_{N(l)}$ satisfying the \emph{global} valuative independence condition Def. \ref{Def:valuativeindependence}, is only known in a few examples.

		The intuition of valuative independence is that $\theta_i$ behaves like a \emph{monomial basis} up to higher order correction. This can be explained in the local example of an affinoid torus fibration.

		\begin{eg}
			(Affinoid torus fibration) We start with the Berkovich space attached to the family $(\C^*)^{n+1}_{z_0,\ldots z_n}\to \C^*_t$ given by $t=z_0\ldots z_n$. There is a tropicalisation map to $\{ \sum x_i=1   \}\subset \R^{n+1}$ defined by the coordinates
			\[
			x_i= -\log |z_i|, \quad \sum_0^n x_i= -\log |t|=1.
			\]
			We consider the open subset of the Berkovich space, defined as the preimage of the open simplex $\Delta^0= \{   x_i>0, \forall i\}\subset\{  \sum_0^n x_j=1    \}$.  Via the Gauss embedding, $\Delta^0$ defines an open subset of the essential skeleton.

			The monomial basis $z^\beta= z_1^{\beta_1}\ldots z_n^{\beta_n}$ for $\beta\in \Z^n$ is a basis of the $\C(t)$-vector space $\C[z_i^{\pm 1}, i=0,\ldots n]$. For any meromorphic  $f=\sum_{\alpha\in \Z^{n+1}  } f_\alpha z^\alpha$, we can write $f=\sum_{\beta\in \Z^n} a_\beta z^\beta$ with $a_\beta\in K$, and for any $x\in \Delta^0$, we can compute the monomial valuation
			\[
			val_x( f  )= \min \{   \sum x_j \alpha_j |  f_\alpha\neq 0  \}= \min_\beta (val(a_\beta)+ val_x(z^\beta ) ).
			\]
			This means a local version of the valuative independence condition is satisfied. If we modify $z^\beta$ by adding higher order perturbations, the valuations will not be affected, so the valuative independence is preserved.

		\end{eg}

		We now move on to some examples for compact CY manifolds.

		\begin{eg}\label{eg:intermediate}
			(Intermediate complex structure limit \cite{Liintermediate}) Let $M$ be an $(n+1)$-dimensional smooth Fano manifold, and we write its anticanonical bundle as $-K_M=(\sum_0^m d_i)L$
			for some ample line bundle $L\to M$, and natural numbers $d_i>0$. Let $F_0, F_1,\ldots, F_m$ (resp. $F$) be sufficiently generic sections of $H^0(M, L^{\otimes d_i})$ (resp. $H^0(M, -K_M)$), so that all the divisors $E_i=(F_i=0)$ and $(F=0)$ are smooth, and all intersections are transverse. We consider the family of CY hypersurfaces $X\to \C_t^*$:
			\begin{equation}\label{Fanohypersurface}
				X_t= \{  F_0F_1\ldots F_m +tF=0   \} \subset M.
			\end{equation}
			This family extends over $t=0$ and defines a dlt minimal model $\mathcal{X}\to \C_t$. 
			Under the assumption $1\leq m\leq n-1$, the intersection $E_J=\cap_0^m E_i$ is connected using the Lefschetz hyperplane theorem. Moreover all the divisor  components $E_i$ have multiplicity $b_i=1$, so  the essential skeleton $Sk(X)$ is the standard $m$-dimensional simplex,
			\begin{equation}
				Sk(X)\simeq \Delta= \{  \sum_0^m x_i=1, x_i\geq 0     \}\subset \R^{n+1}.
			\end{equation}

			We now construct a basis $\theta_i^l$ using the ideas in \cite[section 3.1]{Liintermediate}. Repeatedly using exact sequence and Kodaira vanishing arguments, we see the restriction
			\[
			H^0(M, kL)\to H^0(E_J, kL), \quad k\geq 0,
			\]
			is surjective, so we can pick a subspace $V_k\subset H^0(M,kL)$ mapping isomorphically to $H^0(E_J, kL)$, and we select $\tau_a^k$ to be a basis of $V_k$. For any sequence of integers $(l_0,\ldots l_m)$ satisfying
			\begin{equation}\label{eqn:liexponents}
				l_i\geq 0,\quad \sum_0^m d_i l_i \leq l, \quad \min_{i} l_i=0,
			\end{equation}
			we take the sections
			\[
			F_0^{l_0}\ldots F_m^{l_m} \otimes \tau^{l-\sum d_il_i}_a \in H^0(M, lL) ,
			\]
			and regard these as holomorphic sections on $X$ by restriction. We define $\{ \theta_i^l\}$ to be the union of all these sections when we range  over all such sequences $(l_0,\ldots l_m)$.

			\begin{lem}
				For each $l\geq 0$, the number of such sections is $N(l)= \dim H^0(X_t, lL)$. 
			\end{lem}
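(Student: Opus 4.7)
The plan is to compute $N := \#\{\theta_i^l\}$ and $N(l) = \dim H^0(X_t,lL)$ separately via iterated short exact sequences and Kodaira vanishing, and check they coincide.

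First, since $X_t\in|-K_M|=|\sum d_iL|$, the adjunction short exact sequence
\[
0\to\mathcal{O}_M\bigl((l-\textstyle\sum d_i)L\bigr)\to\mathcal{O}_M(lL)\to\mathcal{O}_{X_t}(lL)\to 0,
\]
together with the Kodaira vanishing $H^1\bigl(M,(l-\sum d_i)L\bigr)=0$ (using ampleness of $L$ on the Fano $M$), yields
\[
N(l)=\dim H^0(M,lL)-\dim H^0\bigl(M,(l-\textstyle\sum d_i)L\bigr).
\]
On the combinatorial side, enumeration of \eqref{eqn:liexponents} together with inclusion–exclusion on the constraint $\min_i l_i=0$ (the substitution $l_i=l_i'+1$ kills the excluded locus) rewrites
\[
N=M(l)-M\bigl(l-\textstyle\sum d_i\bigr),\qquad M(l):=\sum_{l_i\geq 0,\ \sum d_il_i\leq l}\dim H^0\bigl(E_J,(l-\textstyle\sum d_il_i)L\bigr).
\]

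It therefore suffices to establish the Koszul-type identity $M(l)=\dim H^0(M,lL)$. For this, consider the descending flag of smooth complete intersections
\[
M\supset E_0\supset E_0\cap E_1\supset\cdots\supset E_J,
\]
whose intermediate strata $E_{0\cdots j}$ are Fano (with $-K_{E_{0\cdots j}}=(\sum_{i>j}d_i)L$) and whose deepest stratum $E_J$ is Calabi--Yau. At the $j$-th step, the sequence
\[
0\to\mathcal{O}_{E_{0\cdots j-1}}\bigl((k-d_j)L\bigr)\to\mathcal{O}_{E_{0\cdots j-1}}(kL)\to\mathcal{O}_{E_{0\cdots j}}(kL)\to 0
\]
has $H^1$ of the kernel vanishing by Kodaira on the Fano stratum $E_{0\cdots j-1}$ (ampleness of $(k+\sum_{i>j}d_i)L$ for $k\geq 0$), so restriction is surjective and dimensions subtract. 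Telescoping first in $l_0$ along $M$, then in $l_1$ along $E_0$, and so on down to $E_J$, produces exactly $\dim H^0(M,lL)=M(l)$. Combining the two steps gives $N=N(l)$.

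The main technical care lies in the Kodaira bookkeeping along the flag, and in particular at the final restriction onto the Calabi--Yau $E_J$: there the needed vanishing $H^1(E_{0\cdots m-1},-d_mL)=0$ follows by Serre duality from $H^{>0}(\mathcal{O}_X)=0$ for the Fano variety $X=E_{0\cdots m-1}$, which is valid precisely when $\dim E_{0\cdots m-1}=n-m+1\geq 2$, i.e.\ under the intermediate-range hypothesis $m\leq n-1$ imposed on the example. All other vanishings in the recursion are immediate from Kodaira on Fanos plus the fact that $H^0(E_{0\cdots j},kL)=0$ for $k<0$ on any projective variety with $L$ ample, which terminates each telescoping sum.
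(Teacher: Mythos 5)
Your proof is correct, and it takes a genuinely different route from the paper's. The paper works with Hilbert--Poincar\'e series: it computes the generating function $\sum_l N\,y^l$ by summing partial series over the locus $l_{i_1}=\dots=l_{i_s}=0$, invoking the formula for the Hilbert series of a complete intersection, and then applying inclusion--exclusion over subsets of indices to arrive at $(1-y^{\sum d_i})P_M(y)=P_{X_t}(y)$. You instead do direct dimension counting: one adjunction sequence on $M$ gives $N(l)=\dim H^0(M,lL)-\dim H^0(M,(l-\sum d_i)L)$, the substitution $l_i\mapsto l_i'+1$ reduces the combinatorial count with $\min_i l_i=0$ to the telescoped difference $M(l)-M(l-\sum d_i)$, and the identity $M(l)=\dim H^0(M,lL)$ is established by unwinding the flag $M\supset E_0\supset\cdots\supset E_J$ with Kodaira vanishing at each step. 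The two arguments lean on the same underlying facts (Koszul resolutions and vanishing for Fano strata), but your version avoids generating functions entirely, uses a single complementary-counting move in place of the $2^{m+1}$-term inclusion--exclusion, and --- a real advantage --- makes visible exactly where the hypothesis $m\leq n-1$ enters, namely in the final Serre-duality vanishing $H^1(E_{0\cdots m-1},K_{E_{0\cdots m-1}})\cong H^{\dim E_{0\cdots m-1}-1}(\mathcal O)^*=0$, which needs $\dim E_{0\cdots m-1}=n+1-m\geq 2$. One small point to tighten: in the telescoping you repeatedly apply the restriction sequence with $k$ decreasing by $d_j$, and the last step lands at a possibly negative twist $k'\in[-d_j,0)$; since $k'+\sum_{i\geq j}d_i\geq\sum_{i>j}d_i$ this is still covered by Kodaira for $j<m$, with the $j=m$, $k'=-d_m$ case being exactly the Serre-duality edge case you flagged. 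With that observation spelled out, the argument is complete.
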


			\begin{proof}
				The strategy is to compare the generating series of these numbers. For each $l$, the number of all such sections is
				\[
				\sum_{(l_0,\ldots l_m)}  \dim V_{l-\sum d_i l_i}=   \sum_{(l_0,\ldots l_m)}  \dim H^0(E_J, (l-\sum d_il_i)L) ,
				\]
				so the generating series is 
				\[
				\sum_l \sum_{(l_0,\ldots l_m)}  \dim H^0(E_J, (l-\sum d_il_i)L) y^l.
				\]
				The condition (\ref{eqn:liexponents}) requires $\min_i l_i=0$. The partial generating series obtained by summing all terms with $l_0=0$ is
				\[
				\begin{split}
					&	(1+ y^{d_1}+ y^{2d_1}+\ldots )\ldots (1+ y^{d_m}+y^{2d_m}+ \ldots )(\sum_k   \dim H^0(E_J, kL)y^k )
					\\
					=	& 
					(1- y^{d_1})^{-1} \ldots (1- y^{d_m})^{-1} P_{E_J}(y)
					\\
					=&  (1- y^{d_0}) P_M(y)	,
				\end{split}
				\]
				where $P_{E_J}(y)$ and $P_M(y)$ are the Hilbert-Poincar\'e series of $(E_J,L)$ and $(M,L)$ respectively, and we are applying the formula for the Hilbert series of complete intersections. Similarly, for any given $0\leq i_1<\ldots <i_s\leq m$, the partial sum over all terms with $l_{i_1}=\ldots l_{i_s}=0$ is $(1- y^{d_{i_1}})\ldots (1- y^{d_{i_s}}) P_M(y)$. By inclusion-exclusion, the generating series is
				\[
				\begin{split}
					&	\sum_{s\geq 1} \sum_{0\leq i_1<\ldots <i_s\leq m}  (-1)^{s-1} (1- y^{d_{i_1}})\ldots (1- y^{d_{i_s}}) P_M(y)
					\\
					=&   (1- y^{\sum d_i}) P_M(y)
					\\
					=&  P_{X_t} (y)= \sum_k \dim H^0(X_t, kL) y^k.
				\end{split}
				\] 
				The third line uses that $X_t$ is a degree $\sum d_i$ hypersurface in $M$. 
				The Lemma follows by comparing coefficients.
			\end{proof}

			To check that this collection of sections satisfies valuative independence, we appeal to Lemma \ref{lem:valuativeindependence2} (which works for dlt models just as in the  SNC case, since the dual complex of the dlt model is just the dual complex of its SNC locus). The collection of exponents are the $(l_0,\ldots l_m)$ specifed by (\ref{eqn:liexponents}); this collection has the property that each equivalence class modulo $\Z(1,\ldots 1)$ contains a unique exponent $(l_0,\ldots l_m)$, and for this given exponent, the sections $\tau^{l-\sum d_il_i}_a$ are linearly independent as sections on $E_J$.

		\end{eg}

		\begin{eg}\label{eg:abelian}
			(Abelian variety) We now follow \cite[Section 2]{Gross} to consider Mumford's construction of theta functions on Abelian varieties. The starting data is a lattice $M\simeq \Z^n$, $M_\R = M \otimes_\Z \R$, $N = Hom_\Z(M,\Z)$, a sublattice $\Gamma\subset M$, 
			and a strictly convex piecewise linear function with integral slopes $\Phi: M_\R\to\R$ satisfying a periodicity condition, for $\gamma\in \Gamma$,
			\[
			\Phi(m+\gamma)= \Phi(m)+\alpha_\gamma(m),
			\]
			for some affine linear function $\alpha_\gamma$ depending on $\gamma$. From this data one builds an unbounded polyhedron in $M_\R \oplus \R$: 
			\[
			\Delta_\Phi := \{(m,r)|m \in M_\R,r \geq \Phi(m)\}.
			\]
			The normal fan of this polyhedron in $N_\R\oplus  \R$ is a fan $\Sigma_\Phi$ with an infinite number of cones, defining a toric variety $X_\Phi$ which is not of finite type.
			Further, $\Gamma$ acts on $N \oplus \Z$; indeed, $\gamma\in \Gamma$ acts by
			taking $(n, r) \mapsto (n-r  d\alpha_\gamma , r)$, where $d\alpha_\gamma$ denotes the differential of $\alpha_\gamma$. This action preserves $\Sigma_\Phi$.

			The projection $N_\R\oplus \R \to \R$ defines a map $\pi:X_\Phi \to \C$. The fibres of this map are algebraic tori $(\C^*)^n$ except for $\pi^{-1}(0)$, which is an infinite union of proper toric varieties. Furthermore, the action of $\Gamma$ preserves this map, and yields an action of $\Gamma$ on the irreducible components of $\pi^{-1}(0)$.
			While the $\Gamma$-action is global, it does not act properly discontinuously except on the subset $\pi^{-1}(D)$, where $D \subset \C$ is the unit disk. Thus we get a family $\pi: \mathcal{X}= \pi^{-1}(D)/\Gamma\to D$, 
			whose general fibre is an abelian variety and such that the fibre over zero is a union of toric varieties.

			The polyhedron $\Delta_\Phi$ induces a line bundle $\mathcal{L}\to X_\Sigma$, which descends to a line bundle on the quotient family $\mathcal{L}\to \mathcal{X}$.  For any level $l\geq 1$, to write down sections of $\mathcal{L}^{\otimes l}\to \mathcal{X}$, it suffices to write down $\Gamma$-invariant sections of $\mathcal{L}^{\otimes l}\to X_\Sigma$ subject to the convergence requirements. This can be done by taking, for each $m\in l^{-1}M$, the infinite series
			\[
			\vartheta_m= \sum_{\gamma\in \Gamma} z^{   (  l(m+\gamma), l\Phi(m+\gamma))       }.
			\]
			Here $( l(m+\gamma), l \Phi(m+\gamma) ) $ is an integral point in $(M\oplus \Z)\cap l \Delta_\Phi$, so corresponds to a monomial section of $\mathcal{L}^{\otimes l}\to X_\Sigma$. One observes that $\vartheta_{m+\gamma}=\vartheta_m$, so if we set $B= M_\R/\Gamma$, we get a set of theta functions indexed by the points of $B(l^{-1}\Z)$. On the abelian variety fibres, these $\vartheta_m$ restrict to the basis of classical theta functions.

			We now check that for each $l\geq 1$, the valuative independence condition holds for  the theta basis $\{ \vartheta_m \}$. For any pair $m\neq m'\in B(l^{-1}\Z )$, the monomial exponents $( l(m+\gamma), l \Phi(m+\gamma) ) $ and $( l(m'+\gamma'), l \Phi(m'+\gamma') ) $ lie in distinct equivalence classes modulo $\Z(0,1)$ (namely the monomial corresponding to $t$), so Lemma \ref{lem:valuativeindependence2} implies valuative independence.

		\end{eg}

		\subsection{Theta functions in mirror symmetry*}\label{sect:theta}

		This section will be of purely motivational character.

		\begin{Question}
			Given a polarised degeneration family, what is the source of the distinguished basis of sections for $L^{\otimes l}\to X$?
		\end{Question}

		In the Abelian variety example \ref{eg:abelian}, we saw that the classical theta functions provide a canonical basis satisfying the valuative independence condition. 
		A vast generalisation of the theta function construction has been found in the context of the Gross-Siebert programme of mirror symmetry, in various degrees of generality \cite{Grosstheta1}\cite{Grosstheta2}\cite{Grosstheta3}. The broad picture is the following \footnote{As a caveat, our notations for $X$ and $X^\vee$ will be the opposite to the conventions of the Gross-Siebert programme, since for us the theta functions will live on $X$ rather than $X^\vee$.}:
		
		\begin{itemize}
			\item One starts with a polarised family of Calabi-Yau manifolds $(X^\vee, L^\vee)\to S$ near the large complex structure limit,  and constructs the essential skeleton $B$, which is an $n$-dimensional polyhedral complex with a natural integral structure.     (In a variant setting, one can start with a log Calabi-Yau pair $(X^\vee,D)$).

			\item  The mirror to $X^\vee\to S$ will be a polarised family of projective varieties $(X,L)$ over a formal neighbourhood in a multi-parameter base $H^2(X^\vee)$. When $(X^\vee, L^\vee)\to S$ 
			is a toric degeneration of Calabi-Yau varieties \cite[Def. 7.1]{Gross}  with polarisation data, then $(X,L)$ is again a toric degeneration of Calabi-Yau varieties with polarisation data, whose central fibre is obtained by gluing toric varieties along toric strata, and the family $X$ is obained by successively modifying the gluing map between toric charts order by order, subject to the Konsevich-Soibelman wall crossing formula. In many cases, the polarised family $(X,L)$ over the formal neighbourhood extends to a meromorphic family with finite radius of convergence.

			\item The theta functions $\{  \vartheta_p^l \}$ are a \emph{canonical basis} of sections for $L^{\otimes l}\to X$, for $l\geq 1$. For each $l$, the theta functions are indexed by the rational points  $p\in B(l^{-1}\Z)$. In the toric degeneration setting, the theta functions can be combinatorially constructed using counts of `broken lines' in $B$.

			\item The multiplication rules for theta functions can be defined either via punctured Gromov-Witten invariants, or equivalently using combinatorial counts of tropical trees with two inputs and one output. For $p\in B(l^{-1}\Z), p'\in B(l'^{-1}\Z)$, the product is
			\[
			\vartheta_p^l \vartheta_{p'}^{l'}=  \sum_{r\in B(  (l+l')^{-1}\Z  )} \alpha_{p,p',r} \vartheta_r^{l+l'},
			\]
			where the coefficient is a formal sum
			\[
			\alpha_{p,p',r}= \sum_{A\in H_2(X^\vee)} N_{p,p',r}^A t^A.
			\]
			Here $N_{p,p',r}^A$ are enumerative invariants, which are only nonzero when $A$ lives inside the monoid of effective curve classes, and in particular the energy of the curve is non-negative. The Gross-Siebert mirror family $(X,L)$ agrees with the relative Proj construction of the ring of theta functions. (In the variant setting of log Calabi-Yau pairs $(X^\vee, D)$, one instead takes the Spec of the ring of theta functions over the formal multi-parameter base.)

			\item The central fibre of Gross-Siebert mirror family is obtained by keeping only the zero energy $A=0$ contribution. By \cite[Lemma 1.15, Prop. 1.16]{Grosstheta3}, the coefficient $N_{p,p',r}^{A=0}$ is only non-zero if $p,p'$ live on the same face of $B$, and $r= \frac{l}{l+l'} p+ \frac{l'}{l+l'} p'$, in which case $N_{p,p',r}^{A=0}=1$.

		\end{itemize}

		A large source of examples of large complex structure limits of Calabi-Yau manifolds comes from the base change of the multi-parameter family in the Gross-Siebert programme. 
		In the light of the above, it would be interesting to ask:
		
		\begin{Question}
			When do the theta functions in the Gross-Siebert programme satisfy valuative independence condition? When this condition holds, can one explicitly compute the valuation of the theta functions on the essential skeleton $Sk(X)$?
		\end{Question}

		\begin{rmk}
			The theta functions are defined over the formal multi-parameter base, due to the usual convergence issue of curve counting invariants when one applies Gromov compactness. On the other hand, valuative independence is only sensitive to the leading order terms in the Taylor expansion of the theta functions, so for our purpose one can take truncations of the theta functions instead. 
		\end{rmk}

		\begin{rmk}
			This question is mostly unknown for polarised degeneration of compact Calabi-Yau manifolds, but in the related context of cluster varieties, which are special cases of the log Calabi-Yau setting above, a version of the valuative independence for theta functions has been observed in the exciting recent work of Cheung-Magee-Mandel-Muller, announced in \cite{Mandel}. 
		\end{rmk}

		Gross-Siebert programme so far has been mainly focused on the large complex structure limit. Example \ref{eg:intermediate} suggests there is more to the story.

		\begin{Question}
			For polarised degenerations of Calabi-Yau manifolds which are \emph{not} large complex structure limits, when is there a canonical basis of sections for $L^{\otimes l}\to X$, and when does it satisfy valuative independence?
		\end{Question}

		\section{Potential convergence in the hybrid topology}\label{sect:potentialconvergence}

		The goal of this section is to prove Theorem \ref{thm:hybridconvergence}. The key tool is to compare the Fubini-Study approximation with the Calabi-Yau metric on $(X_t,L)$, and the proof will share a number of key ingredients with the intermediate complex structure limit example \cite{Liintermediate}.

		Let $(\mathcal{X},\mathcal{L})\to S$ be a semistable SNC model of $(X,L)$. Let $h$ be a fixed positively curved Hermitian metric on $(\mathcal{X},\mathcal{L})$ over some disc around $0\in \mathbb{D}_t$. The curvature current of the Hermitian metric $h_{CY,t}=h e^{-2\phi_{CY,t}}$ defines the Calabi-Yau metric on $(X_t,L)$.

		\begin{lem}\label{lem:Linfty}
			\cite{LiuniformSkoda} Under the normalisation $\sup_{X_t} \phi_{CY,t}=0$, we have the $L^\infty$ bound $\norm{\phi_{CY,t}}_{C^0(X_t)}\leq C|\log |t||$, where the constant is independent of $t$. 
		\end{lem}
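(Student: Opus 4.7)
The plan is to deduce this $L^\infty$ bound from the complex Monge--Amp\`ere equation satisfied by $\phi_{CY,t}$, using Kolodziej-type $L^\infty$ estimates made uniform across the degenerating family. Write the Calabi--Yau equation on $X_t$ in the form
\[
(\omega_h + dd^c \phi_{CY,t})^n = f_t\, \omega_h^n,\qquad f_t = \frac{(L^n)}{\int_{X_t} \Omega_t\wedge \overline{\Omega}_t}\cdot \frac{\Omega_t\wedge \overline{\Omega}_t}{\omega_h^n},
\]
where $\omega_h$ is the (uniformly positive) Chern curvature of $h$ restricted to $X_t$. By Theorem \ref{Volumeconvergence} the normalising integral is of order $|\log|t||^m$, so $f_t$ is a probability density times $|\log|t||^{-m}(L^n)$ against $\omega_h^n$, which concentrates on the preimage of $Sk(X)$ under the log-norm map.

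First I would control the density $f_t$ in $L^p$ for some $p>1$ independent of $t$. Using the explicit local expression (\ref{holovollocal}) for $\Omega_t$ near each SNC stratum, changing variables to the $|t|^{x_i}e^{i\theta_i}$ polar-type coordinates on the tubular neighbourhood of the central fibre strata, and using $a_i\geq 0$ (with $a_i=0$ exactly on the essential skeleton faces), one obtains an estimate of the form
\[
\|f_t\|_{L^p(X_t, \omega_h^n)} \leq C_p |\log|t||^{A}
\]
for a uniform constant $C_p$ and an integer exponent $A$ depending only on $m,n,p$. The computation is local on each stratum and uses that the singular factors $\prod |z_i|^{-2}$ of $|\Omega_t|^2/\omega_h^n$ are controlled once one integrates in the angular variables $\theta_i$, with the collar region $|t|^{x_i}$ giving polynomial powers of $|\log|t||$.

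Second, I would apply an $L^\infty$ estimate for the complex Monge--Amp\`ere equation, of the Kolodziej/Eyssidieux--Guedj--Zeriahi type (or the newer Guo--Phong--Tong approach), to conclude
\[
\sup_{X_t}\phi_{CY,t}-\inf_{X_t}\phi_{CY,t}\leq C\bigl(1+\|f_t\|_{L^p}\bigr)^{1/n}\leq C|\log|t||^{A/n}.
\]
The scale-invariant nature of the estimate, combined with a direct check that one can arrange $A\leq n$ (which uses $m\leq n$ and the partition of unity adapted to the SNC strata), yields the stated bound $\|\phi_{CY,t}\|_{C^0(X_t)}\leq C|\log|t||$ under the normalisation $\sup_{X_t}\phi_{CY,t}=0$.

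The crux — and the real content of the reference \cite{LiuniformSkoda} — is the \emph{uniformity} of the $L^\infty$ estimate as $t\to 0$. A naive application of Kolodziej's theorem gives a constant depending on the $\alpha$-invariant/Skoda integrability constant of $(X_t,\omega_h)$, and one must rule out blow-up of this constant along the degeneration. The uniform Skoda inequality provides exactly a family version of the integrability bound $\int e^{-\alpha\phi}\omega_h^n\leq C$ for $\omega_h$-psh $\phi$ with $\sup\phi=0$, with $\alpha,C$ independent of $t$. Once this is granted, the Kolodziej iteration (or the Guo--Phong--Tong auxiliary equation argument) runs uniformly in $t$ and the computation of $\|f_t\|_{L^p}$ above closes the estimate.
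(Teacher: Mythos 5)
The key step in your proposal — the bound $\|f_t\|_{L^p(X_t,\omega_h^n)}\leq C_p|\log|t||^A$ for some fixed $p>1$ — is false, and this is precisely the obstacle that makes the uniform $L^\infty$ estimate non-trivial. Near an $m$-dimensional skeleton stratum, the density is $\Omega_t\wedge\overline{\Omega}_t/\omega_h^n\sim\prod_{i=1}^m|z_i|^{-2}$. Passing to the variables $x_i=-\log|z_i|/|\log|t||$, one finds
\[
\int\Bigl(\prod_{i=1}^m|z_i|^{-2}\Bigr)^{p}\,\omega_h^n\sim|\log|t||^{m}\int_{\Delta}|t|^{-2(p-1)\sum x_i}\,dx\sim|\log|t||^{m-1}\,|t|^{-2(p-1)},
\]
since the integrand peaks at $\sum x_i=1$ with value $|t|^{-2(p-1)}$. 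After dividing by the normalisation $\bigl(\int\Omega_t\wedge\overline\Omega_t\bigr)^p\sim|\log|t||^{mp}$, the $L^p$ norm $\|f_t\|_{L^p}$ still blows up like $|t|^{-2(p-1)/p}$, i.e.\ polynomially in $|t|^{-1}$ rather than in $|\log|t||$. Integrating out the angular variables does not soften this: the singularity is radial. So the first step of your proposal (and hence the deduction $\sup-\inf\leq C|\log|t||$ via an $L^p$-based Kolodziej constant) collapses.

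A second, related issue: the form of the uniform Skoda inequality you invoke, $\int e^{-\alpha\phi}\omega_h^n\leq C$ for $\omega_h$-psh $\phi$ with $\sup\phi=0$, is the easy one (it is essentially the lower bound on the $\alpha$-invariant of the fixed ambient metric $\omega_h$, available by standard arguments and certainly uniform). What actually fails to be uniform in a naive treatment, and what \cite{LiuniformSkoda} actually establishes, is the exponential integrability against the degenerate Calabi--Yau measure $\mu_t$ in the rescaled class: there exist uniform $\alpha,A>0$ such that $\int_{X_t}e^{-\alpha\phi}\,d\mu_t\leq A$ for $\tfrac{\omega_h}{|\log|t||}$-psh $\phi$ with $\sup\phi=0$. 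Because $f_t$ is not uniformly $L^p$, this Skoda bound for $\mu_t$ cannot be derived from the one for $\omega_h^n$ by H\"older; it is the genuine content of the reference. Once one has it, the $L^\infty$ bound follows by running the Kolodziej comparison/capacity argument directly with the measure $\mu_t$ (this is the mechanism in \cite[Thm.~2.7]{LiFermat} as invoked in Prop.~\ref{prop:potentiallowerbound} of this paper), without any appeal to an $L^p$ density bound. The linear factor $|\log|t||$ in the statement comes purely from the rescaling $\phi\mapsto\phi/|\log|t||$, $\omega_h\mapsto\omega_h/|\log|t||$, under which the Kolodziej-type estimate gives an $O(1)$ bound — it is not obtained by taking a $1/n$-th power of a polylogarithmic $L^p$ quantity.
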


		\subsection{Bergman kernel approximation}

		We start with an application of Ohsawa-Takegoshi extension, following the key ingredient in \cite[Section 6.3]{Liintermediate}.

		\begin{prop}\label{OhsawaTakegoshi}
			For any sufficiently large $l$, and
			for any $z\in X_t$, there is a section $s\in H^0(X_t, lL)$ with 
			\[
			\frac{1}{2l}\log \int_{X_t} |s|^2_{h^{\otimes l }} e^{ -2l \phi_{CY,t} } \sqrt{-1}^{n^2}\Omega_t \wedge \overline{\Omega}_t \leq    ( \frac{1}{l} \log |s|_{h^{\otimes l}} -\phi_{CY,t}  )(z) + \frac{C}{l} |\log |t||.
			\]	
			The constant $C$ is uniform for all small $t$ and large $l$.
		\end{prop}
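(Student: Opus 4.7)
The plan is to apply the Ohsawa--Takegoshi $L^2$ extension theorem, in its sharp-constant form (Berndtsson, B{\l}ocki, Guan--Zhou), to the positively curved Hermitian line bundle $(lL, h_{CY,t}^{\otimes l})$ on the compact Calabi--Yau manifold $(X_t, \omega_{CY,t})$. Since $K_{X_t}$ is trivial with trivialisation $\Omega_t$, the natural $L^2$ inner product on sections of $K_{X_t}+lL = lL$ pairs via $\sqrt{-1}^{n^2}\Omega_t \wedge \overline{\Omega}_t$, which is precisely the measure appearing in the statement; one therefore expects OT to produce exactly the desired $L^2$ bound.

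First, I would apply the point-extension version of OT at $z \in X_t$ with any prescribed nonzero value. This produces a section $s \in H^0(X_t, lL)$ satisfying
\[
\int_{X_t} |s|^2_{h^{\otimes l}}\, e^{-2l\phi_{CY,t}}\,\sqrt{-1}^{n^2}\Omega_t \wedge \overline{\Omega}_t \;\leq\; \frac{\pi^n}{(2l)^n\,\det\omega_{CY,t}(z)}\, |s(z)|^2_{h^{\otimes l}}\, e^{-2l\phi_{CY,t}(z)},
\]
where $\det\omega_{CY,t}(z)$ is measured against $\sqrt{-1}^{n^2}\Omega_t \wedge \overline{\Omega}_t$. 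The $(2l)^{-n}$ factor reflects the positive curvature $l\omega_{CY,t}$ of the metric. By the Calabi--Yau equation $\omega_{CY,t}^n/n! = a_t \cdot \sqrt{-1}^{n^2}\Omega_t \wedge \overline{\Omega}_t$ with $a_t$ proportional to $(L^n)/\int_{X_t}\Omega_t\wedge\overline{\Omega}_t$, this determinant is the constant $a_t$, and the volume asymptote of Section \ref{section:essentialskeleton} gives $a_t^{-1} = O(|\log|t||^m)$.

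Taking $\tfrac{1}{2l}\log$ of both sides and rearranging yields the claimed inequality, with error $\tfrac{1}{2l}\log\bigl(\pi^n a_t^{-1}(2l)^{-n}\bigr) = O\bigl((\log|\log|t||)/l\bigr)$, which is dominated by $C|\log|t||/l$ for small $t$ and large $l$. The main obstacle to watch for is the uniformity in $t$ of the OT constant, given that $(X_t, \omega_{CY,t})$ collapses in the hybrid limit; what saves the argument is that the sharp form of OT requires only positivity of the curvature current and produces a dimensional constant (essentially the Euclidean unit-ball volume in the complex Hessian metric), with no dependence on injectivity radius, diameter, or Sobolev constants of $\omega_{CY,t}$, all of which degenerate under collapse. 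The crucial $1/l$ decay of the error cannot be recovered by the cruder strategy of running OT with the fixed background metric $h$ and then absorbing $e^{-2l\phi_{CY,t}}$ via Lemma \ref{lem:Linfty}, since the resulting oscillation error would only be $O(|\log|t||)$ rather than $O(|\log|t||/l)$.
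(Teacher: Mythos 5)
The approach you take is genuinely different from the paper's, and the key claim on which it rests does not hold as stated. The paper applies the \emph{non-sharp} OT extension \cite[Thm.~12.6]{Demailly} to the \emph{weighted} metric $h^{\otimes N_0}\bigl(he^{-2\phi_{CY,t}}\bigr)^{\otimes(l-N_0)}$ on $lL$, extending from the zero locus of an auxiliary $n$-tuple of sections $\tilde{\tau}\in H^0(X_t,\oplus_1^n L^{\otimes N_0})$ cutting out the point $z$. The fixed $h^{\otimes N_0}$ factor supplies a definite positive curvature lower bound independent of $l,t$, so the OT constant is controlled; the $t$-dependence in the final estimate comes entirely from the factor $|d\tau_1\wedge\cdots\wedge d\tau_n/\Omega_t|^{-2}_{h^{\otimes nN_0}}(z)$, which is a meromorphic function on $\mathcal{X}$, holomorphic for small $t\neq 0$, and hence bounded below by $|t|^\kappa$; combined with the $L^\infty$ bound on $\phi_{CY,t}$ (used only for the $N_0$ residual factors) this gives $e^{C|\log|t||}$, i.e.\ the claimed $\tfrac{C}{l}|\log|t||$ after taking $\tfrac{1}{2l}\log$. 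Your closing assertion that one cannot recover the $1/l$ decay by ``running OT with the fixed background metric $h$'' therefore misses this middle option: the weight $h^{\otimes N_0}(he^{-2\phi_{CY,t}})^{\otimes(l-N_0)}$ carries the CY weight for $l-N_0$ factors and the fixed $h$ for only $N_0$ factors, so the Lemma~\ref{lem:Linfty} oscillation error is $O(|\log|t||)$, not $O(l|\log|t||)$.

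The gap in your version is the claim that the sharp Ohsawa--Takegoshi theorem on a compact K\"ahler manifold produces a purely dimensional constant, ``with no dependence on injectivity radius, diameter, or Sobolev constants.'' That statement is true for the Guan--Zhou/B{\l}ocki/Berndtsson theorems as formulated for pseudoconvex domains in $\C^n$, but point extension on a compact $X_t$ is not of that form. To carry it out one must still introduce a quasi-psh function $\psi$ with a log-canonical singularity at $z$ (e.g.\ $\psi = 2n\log|\tilde{\tau}|$ for a global section $\tilde{\tau}$ vanishing at $z$), and the curvature inequality one needs, $i\Theta_\phi + i\ddbar\psi\geq 0$, forces a trade-off: if $\psi$ is built from sections of $L^{\otimes N_0}$ (as in the paper), its negative curvature part is $\sim N_0\omega_h$, which is not dominated by $l\omega_{CY,t}$ in the collapsing direction, so it cannot be absorbed into the CY weight alone; if instead $\psi$ is built in $\omega_{CY,t}$-geodesic coordinates around $z$, the resulting quasi-psh bound requires local $C^2$ control of $\omega_{CY,t}$, precisely what degenerates under collapse. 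Your identification of $\det\omega_{CY,t}(z)$ with the global constant $a_t$ via the CY equation is correct and would indeed give a smaller error term $O(\log\log|t|/l)$ if the sharp constant held uniformly, but the uniformity is exactly the unaddressed point, and the paper's construction is how one avoids needing it.
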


		\begin{proof}
			The idea is to apply the Ohsawa-Takegoshi extension theorem \cite[Section 12.C]{Demailly} to produce some section of $lL+ K_{X_t}$ with norm control, by extending from the given point $z\in X_t$ to $X_t$. We select the positive metric on the line bundle $lL$ as a weighted combination of the CY metric $h e^{ -2\phi_{CY,t} } $ and the background metric $h$,
			\[
			h^{\otimes N_0} (h e^{ -2\phi_{CY.t} } ) ^{\otimes (l-N_0) }= h^{\otimes l}e^{ -2(l-N_0) \phi_{CY,t} },
			\]
			where $N_0$ is a large fixed integer independent of $l$, and $\tau_0,\ldots \tau_{N_1}\in H^0(M, N_0 L)$ induces a projective embedding of $\mathcal{X}$, such that $h$ is comparable to the Fubini-Study metric induced by the projective embedding. Since both the CY potential and the background metric $h$ are K\"ahler, the $h^{\otimes N_0}$ term gives a \emph{fixed positive lower bound} on the curvature independent of $l$.

			Any $z\in X_t$ can be viewed as a point in the ambient space $\mathbb{P}^{N_1}$, and without loss it lies in the affine coordinate chart with $\tau_0=1, |\tau_i |\leq 1$, for all $i=1,\ldots N_1$. Up to linear change of coordinates by a matrix bounded in $GL(N_1+1,\C)$, we can assume $\tau_1(z)=\ldots= \tau_{N_1}(z)=0$. Moreover, we can select $\tau_1,\ldots \tau_n$ so that in the local affine coordinates, the function
			\[
			\frac{ d\tau_1\wedge \ldots \wedge d\tau_n}{ \Omega_t }= \frac{ d\tau_1\wedge \ldots \wedge d\tau_n\wedge dt}{ \Omega }
			\]
			is meromorphic on $\mathcal{X}$ and holomorphic for small nonzero $t$, so its modulus is bounded below by $|t|^{\kappa}$ for  some fixed power $\kappa\in \R$. Since the metric $h$ is uniformly equivalent to one in the local affine coordinates, we get
			\[
			|\frac{ d\tau_1\wedge \ldots \wedge d\tau_n }{  \Omega_t} |_{h^{\otimes nN_0}}(z) \geq  C^{-1} e^{-\kappa |\log |t||},
			\]
			and for $\tilde{\tau}=C'^{-1}(\tau_1,\ldots \tau_n)\in H^0(X_t, \oplus_1^n L^{\otimes N_0})$, we can arrange
			$
			|\tilde{\tau} |_{h^{\otimes N_0  }} \leq \frac{1}{2}.
			$
			The point is that the constants are independent of $z$.

			The Ohsawa-Takegoshi extension theorem \cite[Thm. 12.6]{Demailly} applied to $\{  \tau_1=\ldots =\tau_n=0 \}\subset X_t$ then produces a section $s\in H^0(X_t, lL)$ (or equivalently a section $s\otimes \Omega_t\in H^0(X_t, lL+K_{X_t})$), which is nonvanishing at $z$ and vanishes at all other points of $\{  \tau_1=\ldots =\tau_n=0 \}\subset X_t$, satisfying the quantitative estimate
			\[
			\begin{split}
				& \int_{X_t} |s|^2_{h^{\otimes l}} e^{ -2(l-N_0) \phi_{CY,t} } \sqrt{-1}^{n^2}\Omega_t \wedge \overline{\Omega}_t \\
				& \leq 
				C\int_{X_t}  \frac{ |s|^2_{h^{\otimes l}} }{   |\tilde{\tau}|_{h^{\otimes N_0  }}^{2n} (-\log |\tilde{\tau}|_{h^{\otimes N_0  }})^2    }e^{ -2(l-N_0) \phi_{CY,t}  } \sqrt{-1}^{n^2}\Omega_t \wedge \overline{\Omega}_t
				\\
				& \leq C  |s|^2_{h^{\otimes l}}(z) e^{- 2(l-N_0) \phi_{CY,t}(z)  } | \frac{    d\tau_1\wedge \ldots \wedge d\tau_n   }{  \Omega_t} |^{-2}_{h^{ \otimes nN_0 }}(z)
				\\
				& \leq  |s|^2_{h^{\otimes l}}(z) e^{- 2(l-N_0) \phi_{CY,t}(z)  } e^{C|\log |t||}.
			\end{split}
			\]
			Now $|\phi_{CY,t}|\leq C|\log |t||$ by Lemma \ref{lem:Linfty}, 
			hence 
			\[
			\int_{X_t} |s|^2_{h^{\otimes l}} e^{ -2l \phi_{CY,t} } \sqrt{-1}^{n^2}\Omega_t \wedge \overline{\Omega}_t \leq  (|s|^2_{h^{\otimes l}} e^{- 2l \phi_{CY,t} }) (z) e^{C|\log |t||}
			\]
			and taking logarithm proves the claim.
		\end{proof}

		\subsection{Average potentials}\label{sect:averagepotential}

		We will now make some auxiliary constructions to analyse the K\"ahler potential in the generic region. The intuition is that modulo a small measure subset, the local oscillation of $\phi_{CY,t}$ is small. Let $\delta>0$ be a small number to be prescribed later.

		By the CY volume computation of section \ref{section:essentialskeleton}, after deleting a closed subset with normalised CY measure $
		\ll 
		\delta$, the rest of $X_t$ is covered by the local charts around the depth $m+1$ intersections strata $
		E_J=\cap_0^m E_i$ for $E_i$ achieving the minimum value of log discrepancy, but staying $O(\delta)$-bounded away from deeper strata, as in section \ref{section:essentialskeleton}. We take $z_0,\ldots z_m$ as the local defining equations for $E_i$, where $z_0\ldots z_m=t$, and let $x_i= -\frac{\log 
			|z_i|}{|\log |t||}$, so $x\in \Delta_J$. Let $\tilde{U}_{\delta}$ be the union over all choices of $E_J$ for the preimage of the subset $\{   x_i\gtrsim \delta, 1-\sum_0^m x_i\gtrsim \delta  \}$, then the normalised CY measure of its complement is bounded by $C \delta$.

		We let $z_{m+1},
		\ldots z_n$ denote the local coordinates on a polydisc $D(\delta)^{n-m}$ in $E_J$ which stays $O(\delta)$ bounded away from the deeper strata, so $z_1,
		\ldots z_n$ gives a local coordinate system on $X_t$. The number of such polydisc charts needed to cover the complement of deeper strata in $E_J$ is bounded by some constant depending on $\delta$, but not on $t$. 
		The CY measure $\sqrt{-1}^{n^2}\Omega\wedge \overline{\Omega}$ is $C(\delta)$-uniformly equivalent to 
		\[
		\bigwedge_1^m \sqrt{-1} d\log z_i\wedge d\overline{\log z_i}\wedge \bigwedge_{m+1}^n \sqrt{-1} dz_j\wedge d\bar{z}_j.
		\]

		Let $\phi$ be any potential such that $\omega_h + dd^c \phi$ is K\"ahler, and $\norm{\phi}_{L^\infty}\leq C|\log |t||$. 
		In such a coordinate system, we can take the \emph{average potential}
		\begin{equation}\label{eqn:Lipphibar}
			\begin{split}
				&	\bar{\phi}(x_1,\ldots x_m):=
				\\
				& \dashint_{D(\delta)^{n-m} \times  T^n}  \phi(  e^{-|\log |t||x_1+ i\theta_1} , \ldots   e^{-|\log |t||x_m+i\theta_m} , z_{m+1},\ldots   )\bigwedge _1^m d\theta_j \wedge  \bigwedge_{m+1}^n \sqrt{-1} dz_j\wedge d\bar{z}_j.
			\end{split}
		\end{equation}
		This defines a function for $(x_1,\ldots x_m)$ inside $\Delta_J$. We notice that the local potential $\phi_h$ of the Hermitian metric $h$ has $C^\infty$ bounds in the $z_1,\ldots z_n$ coordinates. If we replace $\phi$ by the local psh potential $\phi_h+\phi$, then the average function would be a \emph{convex} function as in \cite[Lemma 2.2]{LiuniformSkoda}, so the $L^\infty$ bound implies a Lipschitz bound on the shrunken simplex $\{   x_i\gtrsim \delta, 1-\sum_0^m x_i\gtrsim \delta  \}\subset \Delta_J$. The Lipschitz bound then transfers to $\phi$ as well,
		\begin{equation}
			|\bar{\phi}(x)- \bar{\phi}(x')|\leq C(\delta) |\log |t|| |x-x'|.
		\end{equation}

		We wish to compare $\phi$ and $\bar{\phi}$ in some integral sense.

		\begin{lem}
			($L^2$-gradient estimate)  Let 
			\[
			\omega_{Eucl}=  \frac{1}{|\log |t||}  \sqrt{-1}  \sum_1^m  d\log z_j\wedge d\overline{\log z_j} + \sum_{m+1}^n  \sqrt{-1} dz_j\wedge d\bar{z}_j ,
			\]
			then
			\[
			\int     |\nabla \phi|_{\omega_{Eucl}}^2 \omega_{Eucl}^n 		    \leq C_1(\delta) |\log |t|| .
			\]
			The integral here is over the subset $ D(\delta)^{n-m}\times \{  x_i\gtrsim \delta, 1-\sum_0^m x_i\gtrsim \delta     \}$ inside $\tilde{U}_\delta$, and $C_1(\delta)$ is some constant depending on $\delta$. 
		\end{lem}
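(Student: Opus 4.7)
The plan is to adapt the classical psh integration-by-parts gradient estimate to the rescaled metric $\omega_{Eucl}$. Replacing $\phi$ by $u := \phi - \inf \phi$, we may assume $0 \leq u \leq A$ with $A = C|\log|t||$, and $dd^c u = dd^c \phi \geq -\omega_h$. Since the region $\tilde U_\delta$ is covered by a number of local coordinate charts of the type described above, depending only on $\delta$ and not on $t$, it suffices to prove the bound inside one such chart.

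In the chart, take a smooth cutoff $\chi = \chi(x_1,\ldots,x_m)$ depending only on the $x_j = -\log|z_j|/|\log|t||$, with $\chi \equiv 1$ on the shrunken simplex $\{x_j \geq \delta,\ 1-\sum x_j \geq \delta\}$, compactly supported in $\{x_j \geq \delta/2,\ 1-\sum x_j \geq \delta/2\}$, and with bounded derivatives in $x$. From $dd^c(u^2) = 2\,du\wedge d^c u + 2u\, dd^c u$ together with $dd^c u \geq -\omega_h$, we get the pointwise inequality
$$du\wedge d^c u \;\leq\; \tfrac{1}{2}dd^c(u^2) + u\,\omega_h,$$
which we integrate against $\chi^2 \omega_{Eucl}^{n-1}$ (legitimate since $\omega_{Eucl}$ is closed in the chart). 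Integration by parts on the first term yields
$$\int \chi^2 |\nabla u|^2_{\omega_{Eucl}}\, \omega_{Eucl}^n \;\leq\; C\int u^2 |\Delta_{\omega_{Eucl}}(\chi^2)|\, \omega_{Eucl}^n + C\int \chi^2 u\, \Tr_{\omega_{Eucl}}(\omega_h)\, \omega_{Eucl}^n.$$

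What remains is a direct scaling calculation. First, the total $\omega_{Eucl}$-volume of the chart is $O(1)$: passing to coordinates $w_j = \log z_j$, the volume form has density $\sim |\log|t||^{-m}$ while the domain in $(w,z)$ has Euclidean volume $\sim |\log|t||^m$. Second, on the support of $\chi$ one has $|z_j| \leq |t|^\delta$ for $j \leq m$, so the log-direction contribution $|\log|t||\cdot|z_j|^2 \cdot g_{h,j\bar j}$ to $\Tr_{\omega_{Eucl}}\omega_h$ tends to zero as $t \to 0$ and the transverse contribution is bounded; hence $\Tr_{\omega_{Eucl}}\omega_h = O(1)$ uniformly on the support of $\chi$. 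Third, by the chain rule, derivatives of $\chi$ in $z_j$ carry a factor $1/|\log|t||$ compared to derivatives in $x_j$, which combines with the $|\log|t||\cdot|z_j|^2$ factor in $g_{E}^{j\bar j}$ to produce $|\nabla \chi|^2_{\omega_{Eucl}} + |\Delta_{\omega_{Eucl}}\chi| \leq C/|\log|t||$ pointwise. Inserting these three facts together with $u \leq A = C|\log|t||$, both terms on the right-hand side are bounded by $CA^2/|\log|t|| + CA = C|\log|t||$, which is the desired estimate.

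The delicate point is the logarithmic bookkeeping: a naive application of the psh gradient estimate would produce a bound quadratic in $\|u\|_{L^\infty}$, i.e.\ $C|\log|t||^2$, which is one logarithmic factor too large. The saving comes from the compounding of two $1/|\log|t||$ effects built into $\omega_{Eucl}$, namely the rescaling of the cutoff's derivatives and the suppression of $\Tr_{\omega_{Eucl}}\omega_h$ on the shrunken simplex; these are precisely the mechanisms encoding the collapsing geometry of the family.
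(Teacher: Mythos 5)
Your local-cutoff approach has a genuine gap: the cutoff $\chi$ depends only on $x_1,\ldots,x_m$, so it is not compactly supported inside the chart $D(\delta)^{n-m}\times\{x_j\gtrsim\delta\}$. When you integrate $\tfrac12 dd^c(u^2)\wedge\chi^2\omega_{Eucl}^{n-1}$ by parts, you must throw the derivatives onto $\chi^2$, but $\chi^2$ does not vanish on the transverse boundary $\partial D(\delta)^{n-m}\times\{x_j\gtrsim\delta\}$; the boundary terms living there are uncontrolled (they involve $d^c(u^2)$, i.e.\ $u\nabla u$, on the sphere, which is not bounded a priori). If you instead force $\chi$ to also cut off in the transverse variables $z_{m+1},\ldots,z_n$ (e.g.\ supported in $D(\delta)^{n-m}$, identically $1$ on $D(\delta/2)^{n-m}$), those derivatives contribute $|\nabla\chi|^2_{\omega_{Eucl}}\sim\delta^{-2}$ with no $|\log|t||^{-1}$ gain, because $\omega_{Eucl}$ is just Euclidean in the transverse directions. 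The resulting term $\int u^2\,d\chi\wedge d^c\chi\wedge\omega_{Eucl}^{n-1}$ is then of order $\|u\|_{L^\infty}^2\sim|\log|t||^2$, exactly the extra logarithmic factor you were trying to avoid. The scaling bookkeeping in your proof is correct in the $m$ log-directions, but it does not help in the $n-m$ transverse directions, and those are the ones that force a cutoff.

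The paper sidesteps this entirely by going global. It invokes \cite[Prop. 3.1]{Lidiameter} (together with a Fubini--Study lower bound and interpolation) to produce a global K\"ahler form $\omega_t'\in c_1(L)$ satisfying $\omega_t'\geq C(\delta)^{-1}\omega_{Eucl}$ on the chart, then integrates by parts over the closed manifold $X_t$:
\[
\int_{X_t} d\phi\wedge d^c\phi\wedge\omega_t'^{\,n-1}\le C(\delta)\,\|\phi\|_{L^\infty}\int_{X_t}(\omega_h+dd^c\phi)\wedge\omega_t'^{\,n-1}\le C(\delta)\,|\log|t||,
\]
since the last integral is a cohomological intersection number $(L^n)$ independent of $t$; restricting to the chart and using $\omega_t'\ge C(\delta)^{-1}\omega_{Eucl}$ gives the claim. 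No cutoff is needed, the $\|\phi\|_{L^\infty}$ factor appears only once rather than squared, and the transverse boundary is simply absent. To rescue your argument you would need some mechanism to control the transverse boundary contribution at the cost of at most one power of $|\log|t||$; the clean way to do that is precisely the global comparison metric the paper uses.
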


		\begin{proof}
			By \cite[Prop. 3.1]{Lidiameter} we can find some global K\"ahler metric $\omega_t'$ on $X_t$ in the class $ c_1(L)$, such that
			\[
			\omega_t'\geq C(\delta)^{-1}   \frac{1}{|\log |t||} \sum_1^m  d\log z_j\wedge d\overline{\log z_j} 
			\]
			on the region of interest. On the other hand, a suitable Fubini-Study metric in the class $c_1(L)$ is bounded below by $C^{-1}\sum dz_i\wedge \overline{dz_i}$, so by linear interpolation, we can require 
			\[
			\omega_t'\geq C(\delta)^{-1}   \frac{1}{|\log |t||} \sum_1^m  d\log z_j\wedge d\overline{\log z_j}  + C^{-1}  \sum dz_i\wedge \overline{dz_i} \geq C(\delta)^{-1}\omega_{Eucl}.
			\]
			Using integration by parts,
			\[
			\int_{X_t} d\phi\wedge d^c\phi \wedge \omega_t'^{n-1}  \leq C(\delta)\norm{\phi}_{L^\infty} \int_{X_t} (\omega_h+ dd^c \phi) \wedge  \omega_t'^{n-1} \leq C(\delta)\log |t||.
			\]
			Thus
			\[
			\int d\phi\wedge d^c \phi  \wedge \omega_{Eucl}^{n-1} \leq C(\delta) |\log |t|| ,
			\]
			which is equivalent to the claim. We comment that $\omega_{Eucl}^n$ is uniformly equivalent to the normalised CY measure on $\tilde{U}_\delta$. 
		\end{proof}

		We can cover the region $D(\delta)^{n-m}\times \{  x_i\gtrsim \delta, 1-\sum_0^m x_i\gtrsim \delta     \}$ by \emph{dyadic scales} of the form
		\[
		D(\delta)^{n-m}\times \{    R_i\leq |z_i|\leq 2R_i , i=1,\ldots m     \},
		\]
		for varying choices of dyadic parameters $(R_1,\ldots R_m)$. The total number of dyadic scales is bounded by $C |\log |t||^m$.    We say a dyadic scale is \emph{bad} if 
		\[
		\int_{\text{dyadic}} d\phi\wedge d^c \phi  \wedge \omega_{Eucl}^{n-1} \geq C_1(\delta)\delta^{-1} |\log |t||^{1-m}.
		\]
		By a counting argument, the number of bad dyadic scales is at most $\delta |\log |t||^m$, so the total contribution to the normalised CY measure is bounded by $C\delta$ when $t$ is small enough depending on $\delta$.

		On the good scales, we thus gained an $L^2$-gradient estimate of the form
		\[
		\int_{\text{dyadic}} |\nabla \phi|_{\omega_{Eucl}}^2 \omega_{Eucl}^n 		
		\leq C(\delta) |\log |t||^{1-m}.
		\]
		By the Poincar\'e inequality,
		\begin{equation}\label{eqn:phiphibarL2}
			\int_{dyadic}  |\phi- \bar{\phi}|^2 \omega_{Eucl}^n \leq  C(\delta)  |\log |t||^{1-m}.
		\end{equation}
		Equivalently, the average integral
		\[
		\dashint_{dyadic}  |\phi- \bar{\phi}|^2 \omega_{Eucl}^n \leq  C(\delta)  |\log |t||.
		\]
		
		\begin{cor}\label{cor:largedeviation}
			There is some constant $C_2(\delta)$ large enough depending on $\delta$, such that
			the normalised CY measure satisfies 
			\[
			\mu_t(	\{  \zeta\in \tilde{U}_\delta: | \phi- \bar{\phi} |\geq C_2(\delta) |\log |t||^{1/2}  \}  )\leq C\delta.
			\]

		\end{cor}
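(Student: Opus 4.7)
The plan is to combine the $L^2$-estimate (\ref{eqn:phiphibarL2}) on good dyadic scales with a Chebyshev-type inequality, and to absorb the bad dyadic scales using the counting bound already established in the preceding paragraph. The comment following (\ref{eqn:phiphibarL2}) tells us that $\omega_{Eucl}^n$ and the normalised CY measure $\mu_t$ have uniformly equivalent densities on $\tilde U_\delta$, so the $L^2$-estimate for $\omega_{Eucl}^n$ can be rewritten, up to a multiplicative constant, as an $L^2$-estimate for $\mu_t$.

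First I would sum the per-scale estimate (\ref{eqn:phiphibarL2}) over all good dyadic scales. Since the number of dyadic scales is bounded by $C|\log|t||^m$, the cumulative bound on the good region is
\[
\int_{\text{good}} |\phi-\bar\phi|^2\,\omega_{Eucl}^n \;\leq\; C(\delta)\,|\log|t||.
\]
By the uniform equivalence of $\omega_{Eucl}^n$ and $\mu_t$ on $\tilde U_\delta$, the same bound (with a possibly adjusted $C(\delta)$) holds with $\omega_{Eucl}^n$ replaced by $\mu_t$. Applying Chebyshev's inequality with threshold $\lambda = C_2(\delta)|\log|t||^{1/2}$ yields
\[
\mu_t\!\left(\{|\phi-\bar\phi|\geq C_2(\delta)|\log|t||^{1/2}\}\cap\text{good}\right) \;\leq\; \frac{C(\delta)\,|\log|t||}{C_2(\delta)^2\,|\log|t||} \;=\; \frac{C(\delta)}{C_2(\delta)^2}.
\]
Choosing $C_2(\delta)$ large enough (concretely, $C_2(\delta)^2 \geq C(\delta)/\delta$) makes this contribution $\leq \delta$.

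Next I would handle the bad scales. The counting argument in the preceding paragraph already shows that the union of bad dyadic scales has normalised CY measure $\leq C\delta$, once $t$ is small enough in terms of $\delta$. Adding the two contributions gives the desired bound $\mu_t(\{\zeta\in\tilde U_\delta : |\phi-\bar\phi|\geq C_2(\delta)|\log|t||^{1/2}\})\leq C\delta$. There is no real obstacle here: the only delicate point is keeping track of the powers of $|\log|t||$, since the Poincar\'e-type estimate has the factor $|\log|t||^{1-m}$ per scale while the number of scales carries the compensating factor $|\log|t||^m$, making the threshold $|\log|t||^{1/2}$ precisely the right one to fit within a $\mu_t$-measure of order $\delta$.
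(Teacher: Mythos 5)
Your proof is correct and takes essentially the same route as the paper: you sum the per-scale $L^2$ bound over good dyadic scales before applying Chebyshev, whereas the paper applies Chebyshev scale by scale and then sums, but these orderings are interchangeable and all the constants work out identically. You also make explicit the conversion from $\omega_{Eucl}^n$ to $\mu_t$ via their uniform equivalence on $\tilde U_\delta$, which the paper leaves implicit.
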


		\begin{proof}
			On each good dyadic scale, by the Chebyshev estimate and (\ref{eqn:phiphibarL2}),
			\[
			\mu_t(     	\{  \zeta:  |\phi- \bar{\phi} |\geq C_2(\delta) |\log |t||^{1/2}  \}           ) \leq  \frac{C(\delta)}{C_2(\delta)^2}  |\log |t||^{-m}.
			\]
			The total number of dyadic scales is bounded by $C(\delta)|\log |t||^m$, while the total normalised CY measure in the bad dyadic scales is bounded by $C\delta$. Hence by choosing $C_2(\delta)$ large enough, we can ensure the claimed measure estimate. 
		\end{proof}

		We note that within any dyadic scale the oscillation of $\bar{\phi}$ is bounded by $C(\delta)$ by (\ref{eqn:Lipphibar}); the intuition is that $\bar{\phi}$ behaves like constants in each dyadic scale. By the plurisubharmonicity $\omega_h+ dd^c\phi\geq 0$,  we deduce from (\ref{eqn:phiphibarL2}) that within any good scale, 
		\begin{equation}\label{eqn:subharmonic}
			\phi \leq  \bar{\phi} + C_3(\delta) |\log |t||^{1/2}
		\end{equation}
		holds on the \emph{shrunken dyadic scale} 
		\begin{equation}\label{eqn:shrunkendyadic}
			D(\delta/2)^{n-m}\times \{   2R_i/3\leq |z_i|\leq 3R_i/2   ,i=1,\ldots m    \}.
		\end{equation}

		\subsection{$L^2$-almost orthogonality}\label{sect:L2almostorthogonality}

		We continue with the notations of Section \ref{sect:averagepotential}. In the $x_i$-coordinates, we let $\mathcal{S}_
		\delta$ be the subset of $\tilde{U}_\delta$ whose $(x_i)$ coordinates lie in the 
		$\delta$-neighbourhood of the corner locus $\mathcal{S}$ in section \ref{sect:equivalentcharacterisation}. The total normalised CY measure in this region is bounded by $C\delta$.

		From henceforth, let $l\geq l_0$ be large enough, so that the space of sections generate $L^{\otimes l}\to X_t$ for all $t\in \mathbb{D}^*$. Let $O_k$ be a connected component of $Sk(X)\setminus \mathcal{S}$, contained in some $m$-dimensional face $\Delta_J^0$. By Prop. \ref{prop:localvaluativeindependence}, we can find a $K$-basis of sections 
		$s_i^{(k)}$ with $i=1,2,\ldots N(l)$, satisfying valuative independence for all $x\in O_k$. The following is a complex geometric manifestation of the valuative independence condition, which is based on the orthogonality of Fourier series. 
		
		\begin{lem}\label{lem:asymptoticorthogonality1}
			Given $l\geq l_0$, and small $\delta>0$, the following holds when $t$ is small enough depending on $l,\delta$. For any $a_1,\ldots a_{N(l)}\in \C$, and for any $\zeta$ in any given dyadic scale in $\tilde{U}_\delta$ disjoint from $\mathcal{S}_\delta$, whose $(x_i)$-coordinates lies in $O_k$, we have
			\[
			\int  |\sum_i a_i s_i^{(k)}|_{h^{\otimes l}}^2    
			\sqrt{-1}^{n^2}\Omega_t\wedge \overline{\Omega}_t \geq  C(\delta,l)^{-1}\sum_i  |a_i|^2 |s_i^{(k)}|_{h^{\otimes l}}^2  (\zeta)
			\]
			where the LHS integral is over the shrunken dyadic scale (\ref{eqn:shrunkendyadic}). 
		\end{lem}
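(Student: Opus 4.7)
The plan is to reduce the $L^2$ integral to a sum of angular Fourier integrals and then invoke the complex-geometric incarnation of valuative independence provided by Lemma \ref{lem:valuativeindependence2}. Throughout I will use that $\sqrt{-1}^{n^2}\Omega_t\wedge\overline\Omega_t$ is $C(\delta)$-uniformly equivalent to $\omega_{Eucl}^n$ on $\tilde U_\delta$, and that the smooth metric $h$ makes $|\cdot|_{h^{\otimes l}}$ comparable to the coordinate magnitude of local monomial sections of $L^{\otimes l}$ up to bounded factors. I would begin by Taylor expanding each $s_i^{(k)} = \sum_\beta f_{i,\beta}(z_{m+1},\ldots,z_n)\,z_0^{\beta_0}\cdots z_m^{\beta_m}$ on the local chart around $E_J$. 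Since the $x$-coordinates of the dyadic scale lie in $O_k$ at distance $\gtrsim\delta$ from the corner locus $\mathcal{S}$, every $\beta\neq\alpha_i$ with $f_{i,\beta}\not\equiv 0$ satisfies $\langle x,\beta\rangle - \langle x,\alpha_i\rangle \geq c\delta$, which gives a uniform suppression $|f_{i,\beta}z^\beta|/|f_{i,\alpha_i}z^{\alpha_i}|\lesssim |t|^{c\delta}$ on the shrunken dyadic scale. So for $t$ small depending on $\delta$ and $l$, one may replace each $s_i^{(k)}$ by its leading monomial $f_{i,\alpha_i}(z_{m+1},\ldots,z_n)\,z^{\alpha_i}$ with negligible relative error.

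Next I would group the sections by equivalence class of $\alpha_i$ modulo $\Z(1,\ldots,1)$, normalising each representative $\alpha_{(\Sigma)}$ to have $\alpha_{0,(\Sigma)}=0$; then $\alpha_i = \alpha_{(\Sigma)}+n_i(1,\ldots,1)$ for $i\in\Sigma$. Using $t = z_0\cdots z_m$ on the semistable SNC model, on the fibre $X_t$ one has $z^{\alpha_i} = t^{n_i} z^{\alpha_{(\Sigma)}}$, so the leading approximant becomes
\[
\sum_\Sigma z^{\alpha_{(\Sigma)}}\,g_\Sigma(z_{m+1},\ldots,z_n), \qquad g_\Sigma = \sum_{i\in\Sigma} a_i t^{n_i} f_{i,\alpha_i}.
\]
In angular coordinates $\theta_j = \arg z_j$ ($j=1,\ldots,m$), distinct classes produce distinct characters of $T^m$, so Fourier integration in $\theta$ annihilates every cross-term and the squared $L^2$-norm splits as
\[
\sum_\Sigma \Big(\int_{\text{shrunken dyadic}} |z^{\alpha_{(\Sigma)}}|_{h^{\otimes l}}^2 \, d\theta\, dR\Big)\cdot \dashint_{D(\delta/2)^{n-m}} |g_\Sigma|^2 \bigwedge_{m+1}^n i\,dz_j\wedge d\bar z_j,
\]
up to bounded constants coming from $h$ and the equivalence of volume forms on $\tilde U_\delta$.

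Finally I would invoke Lemma \ref{lem:valuativeindependence2}: within each class $\Sigma$, the functions $\{f_{i,\alpha_i}\}_{i\in\Sigma}$ are $\C$-linearly independent holomorphic functions on $E_J$, hence on $D(\delta)^{n-m}$. Since $l$ is fixed and $|\Sigma|\leq N(l)$, a compactness argument in the finite-dimensional $\C$-span of these functions produces a constant $C(\delta,l)$ with
\[
\dashint_{D(\delta/2)^{n-m}} |g_\Sigma|^2 \geq C(\delta,l)^{-1} \sum_{i\in\Sigma} |a_i t^{n_i}|^2 |f_{i,\alpha_i}(\zeta)|^2
\]
for any $\zeta$ in the polydisc. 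Summing over $\Sigma$, combining with the angular factors, and absorbing the Step 1 perturbation yields the desired lower bound $C(\delta,l)^{-1}\sum_i |a_i|^2 |s_i^{(k)}|_{h^{\otimes l}}^2(\zeta)$. The main obstacle is Step 1: making the $|t|^{c\delta}$ suppression uniform while ensuring only finitely many Taylor terms need controlling, which relies on the $\delta$-distance from $\mathcal{S}$ and the Lipschitz estimate of Lemma \ref{lem:Lip}; allowing $t$ to be small depending on $l$ is unavoidable.
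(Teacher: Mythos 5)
Your argument follows the paper's proof essentially step for step: replace each $s_i^{(k)}$ by its leading monomial using the $\delta$-distance from the corner locus $\mathcal{S}$, group the leading exponents by equivalence class modulo $\Z(1,\ldots,1)$, decouple the integral across classes by $T^m$-Fourier orthogonality, and then apply the $\C$-linear independence of the $f_{i,\alpha_i}$ from Lemma \ref{lem:valuativeindependence2} to get the $L^2$ lower bound on each class before evaluating at $\zeta$. The bookkeeping differs only cosmetically (you factor out the radial/angular integral explicitly, the paper absorbs it into $C(\delta,l)$), so this is the same proof.
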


		\begin{proof}
			The constants in this proof will depend on $\delta, l$. We omit the superscript $k$ for convenience.
			In the local trivialisation of $
			\mathcal{L}\to \mathcal{X}$, the potential of the Hermitian metric $h$ is bounded by some constant independent of $t$. Thus we shall regard $s_i$ as local holomorphic functions instead of sections, and we can replace the $h$-norm by the modulus. It suffices to show
			\begin{equation}
				\int |\sum_{i=1}^{N(l)} a_i s_i|^2 \bigwedge_1^m \sqrt{-1} d\log z_i\wedge d\overline{\log z_i}\wedge \bigwedge_{m+1}^n \sqrt{-1} dz_j\wedge d\bar{z}_j  \geq  C^{-1}\sum_i  |a_i|^2 |s_i|^2(\zeta). 
			\end{equation}
			The LHS  integration is over the shrunken dyadic scale, which is $T^m$-invariant.

			Since the dyadic scale is disjoint from $\mathcal{S}_\delta$, we have ensured that each $s_i$ has a unique dominant monomial $\tilde{
				s_i}=f_{i,\alpha}(z_{m+1},\ldots z_n) z_0^{
				\alpha_{0,i}}
			\ldots z_m^{\alpha_{m,i}}$ in its Taylor expansion, and any other term has modulus bounded by $Ce^{-\delta |\log |t||}|z_0|^{
				\alpha_{0,i}}
			\ldots |z_m|^{\alpha_{m,i}}$. Thus it suffices to prove the above inequality, but with each $
			s_i$ replaced by the leading monomial term $\tilde{s}_i$. We classify these leading monomials according to the exponents $(\alpha_{0,i},\ldots \alpha_{m,i})$ modulo $\Z(1,\ldots 1)$ (Notice here the multiplicity of $E_i$ is $b_i=1$, by the semistable SNC model assumption). If the exponents of two such monomials lie in different equivalence classes, then after eliminating $z_0$ by $z_0= t z_1^{-1}\ldots z_m^{-1}$, the monomials $z_1^{\alpha_{1,i}-\alpha_{0,i}}\ldots z_m^{\alpha_{m,i}-\alpha_{0,i}}$ are distinct. The crucial observation is that by the $L^2$-\emph{orthogonality of Fourier modes} with respect to the $T^m$-action, the integral 
			\[
			\int |\sum_1^{N(l)} a_i \tilde{s}_i|^2 \bigwedge_1^m \sqrt{-1} d\log z_i\wedge d\overline{\log z_i}\wedge \bigwedge_{m+1}^n \sqrt{-1} dz_j\wedge d\bar{z}_j  
			\]
			decouples into a sum of contributions from each equivalence class of exponents. On the other hand, by the equivalent characterisation of valuative independence in section \ref{sect:equivalentcharacterisation}, within each equivalence class of exponents, the functions $f_{i,\alpha}$ are linearly independent over $\C$, so for any $
			\tilde{a}_i\in \C$, 
			\[
			\int_{D(\delta/2)^{n-m}} |\sum \tilde{a}_i f_{i,\alpha}|^2 \bigwedge_{m+1}^n \sqrt{-1} dz_j\wedge d\bar{z}_j  \geq C(\delta)^{-1} \sum |\tilde{a}_i|^2.
			\]
			We emphasize that the constant is independent of $\tilde{a}_i$ and $t$.
			We apply this to $\tilde{a}_i=a_i t^{n_i}$ for appropriate $n_i$, chosen so that the exponents
			$(a_{0,i}-n_i,\ldots ,a_{m,i}-n_i)$ are equal for all $i$ in the given equivalence class.

			Combining the above, on the open subset $U_\delta$, we have
			\[
			\begin{split}
				& \int |\sum_1^{N(l)} a_i \tilde{s}_i|^2 \bigwedge_1^m \sqrt{-1} d\log z_i\wedge d\overline{\log z_i}\wedge \bigwedge_{m+1}^n \sqrt{-1} dz_j\wedge d\bar{z}_j  \\
				= &\sum_{equiv. class}  \int |\sum_i a_i \tilde{s}_i|^2 \bigwedge_1^m \sqrt{-1} d\log z_i\wedge d\overline{\log z_i}\wedge \bigwedge_{m+1}^n \sqrt{-1} dz_j\wedge d\bar{z}_j 
				\\
				\geq & C^{-1} \sum_{equiv. class} |z_0|^{
					2(\alpha_{0,i}-n_i)}
				\ldots |z_m|^{2(\alpha_{m,i}-n_i)} \int_{D(\delta/2)^{n-m}} |\sum_i \tilde{a}_i f_{i,\alpha}|^2  
				\\
				\geq & C^{-1} \sum_{equiv. class} \sum_i |\tilde{a}_i|^2 |z_0|^{
					2(\alpha_{0,i}-n_i)}
				\ldots |z_m|^{2(\alpha_{m,i}-n_i)}
				\\
				= &  C^{-1} \sum_1^{N(l)}  |a_i|^2 |z_0|^{
					2\alpha_{0,i}}
				\ldots |z_m|^{2\alpha_{m,i}} 
				\\
				\geq & C^{-1} \sum_1^{N(l)} |a_i|^2 |\tilde{s}_i|^2.
			\end{split}
			\]
			Here the constants change from line to line, and depend on $\delta$ but not on $a_i$ or $t$. The conclusion follows.
		\end{proof}

		We now define $\tilde{U}'_\delta\subset X_t$ to be the union of all the good dyadic scales in $\tilde{U}_\delta$ which are disjoint from $\mathcal{S}_\delta$. We define 
		\[
		U_\delta=  \{     \zeta\in \tilde{U}_\delta': | \phi- \bar{\phi} |\leq C_2(\delta) |\log |t||^{1/2 }                \} \subset \tilde{U}'_\delta\subset X_t.
		\]

		\begin{lem}\label{lem:Udeltameasure}
			Given small $\delta>0$, then for $t$ small enough depending on $\delta$, 
			the normalised CY measure satisfies the uniform estimate $\mu_t(X_t\setminus U_\delta)\leq C\delta$, where the constant $C$ is independent of $\delta$. 
		\end{lem}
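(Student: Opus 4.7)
The plan is to estimate $\mu_t(X_t \setminus U_\delta)$ by a union bound, decomposing the complement into the four sources of ``missing mass'' that have already been individually controlled earlier in the section.

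More precisely, I write
\[
X_t \setminus U_\delta \;\subset\; (X_t \setminus \tilde{U}_\delta) \;\cup\; (\tilde{U}_\delta \cap \mathcal{S}_\delta) \;\cup\; (\text{bad dyadic scales in } \tilde{U}_\delta) \;\cup\; \{\zeta \in \tilde{U}'_\delta : |\phi - \bar{\phi}| > C_2(\delta)|\log|t||^{1/2}\},
\]
so it suffices to bound each of the four pieces by $C\delta$ with $C$ independent of $\delta$. The first term has measure at most $C\delta$ by the volume form asymptote in Section \ref{section:essentialskeleton}: the CY measure is concentrated on a neighbourhood of the top-dimensional faces of $Sk(X)$, and staying $O(\delta)$ away from the boundary only costs $O(\delta)$. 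The second term has measure at most $C\delta$ by definition of $\mathcal{S}_\delta$ as the $\delta$-neighbourhood of the codimension-one corner locus $\mathcal{S}$. The third term was already bounded by $C\delta$ in Section \ref{sect:averagepotential}: there are at most $C(\delta)|\log|t||^m$ dyadic scales, a counting argument (using the global $L^2$-gradient bound $\int |\nabla \phi|^2 \omega_{Eucl}^n \leq C_1(\delta)|\log|t||$) shows at most $\delta|\log|t||^m$ of them are bad, and each dyadic scale carries comparable CY mass, so the total bad contribution is $\leq C\delta$ once $t$ is small enough depending on $\delta$. Finally, the fourth term is exactly the content of Corollary \ref{cor:largedeviation}, giving the bound $C\delta$.

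Summing the four contributions yields $\mu_t(X_t \setminus U_\delta) \leq 4C\delta$, and renaming the constant gives the claim. I do not anticipate any genuine obstacle in this argument: all the delicate work (the Lipschitz estimate on $\bar{\phi}$, the $L^2$-gradient estimate, the dyadic counting, and the Chebyshev/Poincar\'e combination) has already been done. The only subtlety is the order of quantifiers: the smallness threshold for $|t|$ may depend on $\delta$ (through both the bad-dyadic count and the large-deviation estimate), but crucially the final constant $C$ does not, which is what the statement requires.
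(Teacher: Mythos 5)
Your decomposition of $X_t \setminus U_\delta$ into the four pieces (complement of $\tilde{U}_\delta$, the $\delta$-neighbourhood $\mathcal{S}_\delta$ of the corner locus, the bad dyadic scales, and the large-deviation set) is exactly the paper's argument, and your sourcing of the individual bounds matches. Correct, same approach.
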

		
		\begin{proof}
			The estimate $\mu_t( X_t\setminus \tilde{U}'_\delta ) \leq C\delta$ follows from three facts: $\mu_t(X\setminus \tilde{U}_\delta)\leq C\delta$, and $\mu_t(\mathcal{S}_\delta)\leq C\delta$, and the union of all bad dyadic scales have normalised CY measure bounded by $C\delta$.  On the other hand, the large deviation estimate of Cor. \ref{cor:largedeviation}
			implies $\mu_t(  \tilde{U}_\delta'\setminus U_\delta   )\leq C\delta$.
		\end{proof}

		\begin{cor}\label{cor:asymptoticorthogonality2}
			Given $l$ large enough, and small $\delta>0$, the following holds when $t$ is small enough depending on $l,\delta$. For any $a_1,\ldots a_{N(l)}\in \C$, and for any $\zeta$ in  $U_\delta$ whose $(x_i)$-coordinates lies in $O_k$, we have
			\[
			\begin{split}
				& \frac{1}{2l}\log 	\int_{X_t}  |\sum_i a_i s_i^{(k)}|_{h^{\otimes l}}^2  e^{-2l\phi}  \sqrt{-1}^{n^2}\Omega_t\wedge \overline{\Omega}_t 
				\\
				\geq  &  \max_{1\leq i\leq N(l)} (l^{-1}\log  |a_i| |s_i^{(k)}|_{h^{\otimes l}} - \phi ) (\zeta)- C(\delta,l)  |\log |t||^{1/2} .
			\end{split}
			\]
		\end{cor}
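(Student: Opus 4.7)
The plan is to reduce to the unweighted $L^2$ lower bound of Lemma \ref{lem:asymptoticorthogonality1} by extracting the weight $e^{-2l\phi}$ as an essentially constant factor on the dyadic scale containing $\zeta$, using the average potential $\bar\phi$ as an intermediary.

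First I would locate $\zeta \in U_\delta$ inside some good dyadic scale that is also disjoint from $\mathcal{S}_\delta$ (such a scale exists since $U_\delta \subset \tilde U'_\delta$). On the corresponding shrunken dyadic scale (\ref{eqn:shrunkendyadic}), the subharmonic upper bound (\ref{eqn:subharmonic}) gives
\[
\phi \leq \bar\phi + C_3(\delta)|\log|t||^{1/2}.
\]
Next I would exploit that $\bar\phi$ is Lipschitz in $x$ with constant $C(\delta)|\log|t||$ by (\ref{eqn:Lipphibar}), while the $x$-coordinates range over an interval of length $O(|\log|t||^{-1})$ inside the dyadic scale. Hence $\bar\phi$ varies by at most $C(\delta)$ on the scale, so
\[
\bar\phi \leq \bar\phi(\zeta) + C(\delta) \quad \text{throughout the shrunken dyadic scale.}
\]
Combining these two bounds yields a pointwise estimate $\phi \leq \bar\phi(\zeta) + C_3(\delta)|\log|t||^{1/2} + C(\delta)$ on the shrunken dyadic scale, which converts to a pointwise lower bound on $e^{-2l\phi}$ involving only the constant $\bar\phi(\zeta)$.

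I would then restrict the integral on the left hand side to the shrunken dyadic scale (throwing away the rest of $X_t$ only weakens the bound), pull out the exponential factor, and apply Lemma \ref{lem:asymptoticorthogonality1} to the remaining unweighted integral. This produces
\[
\int_{X_t} \Bigl|\sum_i a_i s_i^{(k)}\Bigr|_{h^{\otimes l}}^2 e^{-2l\phi}\sqrt{-1}^{n^2}\Omega_t\wedge\overline{\Omega}_t \geq C(\delta,l)^{-1} e^{-2l\bar\phi(\zeta)} e^{-2lC(\delta)|\log|t||^{1/2}} \sum_i |a_i|^2 |s_i^{(k)}|_{h^{\otimes l}}^2(\zeta).
\]
Taking $\frac{1}{2l}\log$ of both sides, bounding $\sum_i |a_i|^2 |s_i^{(k)}|^2 \geq \max_i |a_i|^2 |s_i^{(k)}|^2$, and finally invoking $|\phi - \bar\phi|(\zeta) \leq C_2(\delta)|\log|t||^{1/2}$ from the defining property of $U_\delta$ to replace $\bar\phi(\zeta)$ by $\phi(\zeta)$, yields the claimed inequality, after absorbing all $O(1)$ and $O(|\log|t||^{1/2})$ errors into a single constant $C(\delta,l)|\log|t||^{1/2}$.

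I do not expect a serious obstacle here: the substantive work was done in Lemma \ref{lem:asymptoticorthogonality1} (the Fourier orthogonality exploiting valuative independence) and in Section \ref{sect:averagepotential} (the comparison between $\phi$, $\bar\phi$, and constants on dyadic scales). The only point that requires mild care is to check that the dyadic-scale oscillation of $\bar\phi$ is $O(C(\delta))$ and not larger — this is where the specific scaling $x_i = -\log|z_i|/|\log|t||$ enters, ensuring that a dyadic annulus in $|z_i|$ corresponds to an $x$-interval of length $O(|\log|t||^{-1})$ that exactly cancels the Lipschitz constant $C(\delta)|\log|t||$ of $\bar\phi$. Everything else is bookkeeping of constants.
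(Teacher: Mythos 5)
Your proposal is correct and follows essentially the same route as the paper: restrict to the shrunken dyadic scale containing $\zeta$, freeze $e^{-2l\phi}$ at $e^{-2l\phi(\zeta)}$ up to an error of size $C(\delta,l)|\log|t||^{1/2}$ using (\ref{eqn:subharmonic}), the $O(C(\delta))$ oscillation of $\bar\phi$ across a dyadic scale, and the defining property of $U_\delta$, then apply Lemma \ref{lem:asymptoticorthogonality1} and bound the sum below by the max. The only cosmetic difference is that you explicitly spell out why the dyadic-scale oscillation of $\bar\phi$ is $O(C(\delta))$ (the $|\log|t||^{-1}$ scale of a dyadic annulus in $x$ cancels the $C(\delta)|\log|t||$ Lipschitz constant), which the paper recorded just before the corollary and cites without re-deriving.
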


		\begin{proof}
			The constants $C$ will depend on $\delta,l$, but not on $\zeta\in U_\delta$ and $ t, a_i$. We omit the superscript $k$.
			By construction $\zeta\in U_\delta$ belongs to a good dyadic scale disjoint from $\mathcal{S}_\delta$. 
			By Lemma \ref{lem:asymptoticorthogonality1}, 
			\[
			\log 	\int   |\sum_i a_i s_i|_{h^{\otimes l}}^2  \sqrt{-1}^{n^2}\Omega_t\wedge \overline{\Omega}_t \geq  \log \sum_i  |a_i|^2 |s_i|_{h^{\otimes l}}^2  (\zeta) -C\geq  \log  \max_i  |a_i|^2 |s_i|_{h^{\otimes l}}^2(\zeta)- C,
			\]
			where the LHS integral is over the shrunken dyadic scale.

			By the Lipschitz estimate (\ref{lem:Lip}), within the dyadic scale $|\bar{\phi}- \bar{\phi}(\zeta)|\leq C$. By (\ref{eqn:subharmonic}), within the shrunken dyadic scale
			$\phi\leq \bar{\phi}(\zeta)+ C |\log |t||^{1/2}$. Since $\zeta\in U_\delta$, we have $|\phi(\zeta)-\bar{\phi}(\zeta)|\leq C  |\log |t||^{1/2}$. Combining the above,
			\[
			\begin{split}
				&	\frac{1}{2l}\log 	\int   |\sum_i a_i s_i|_{h^{\otimes l}}^2  e^{-2l\phi}    \sqrt{-1}^{n^2}    \Omega_t\wedge \overline{\Omega}_t 
				\\
				\geq &  \frac{1}{2l}\log 	\int   |\sum_i a_i s_i|_{h^{\otimes l}}^2   e^{-2l\phi(\zeta)}     \sqrt{-1}^{n^2}   
				\Omega_t\wedge \overline{\Omega}_t - C |\log |t||^{1/2}
				\\
				\geq &    \max_i  (l^{-1}\log  |a_i| |s_i|_{h^{\otimes l}} - \phi ) (\zeta) - C |\log |t||^{1/2},
			\end{split}
			\]
			where in the last step we absorbed some constant $C$ into the $|\log |t||^{1/2}$ term, for $t$ small enough depending on $\delta,l$. 
		\end{proof}

		\subsection{Fubini-Study type metric}

		We shall now introduce the Fubini-Study type metric which approximates the Calabi-Yau metric.

		Let $l\geq l_0$, and 
		we apply the constructions of Section \ref{sect:averagepotential}, \ref{sect:L2almostorthogonality} to the CY potential $\phi=\phi_{CY,t}$, and define the open subset $U_\delta$ etc. Let $S_1,\ldots S_N$ be the collection of meromorphic sections produced in Section \ref{sect:filtration}; in general $N$ may be bigger than $N(l)=\dim H^0(X_t, lL)$. 
		For each $\alpha=1,\ldots , N$, we define
		\begin{equation}
			c_\alpha^l(t)= \sup_{U_\delta} \frac{1}{|\log |t||}(-\phi_{CY,t}+l^{-1} \log |S_\alpha|_{h^{\otimes l}}),
		\end{equation}
		and the continuous semi-positive potential on $(X_t,L)$,
		\begin{equation}
			\phi_{l,t}= \max_{1\leq \alpha\leq N} (l^{-1} \log |S_\alpha|_{h^{\otimes l}} -|\log |t| |c_\alpha^l(t)). 
		\end{equation}

		\begin{prop}\label{prop:potentiallowerbound}
			(Potential lower bound) 
			For $t$ small enough depending on $\delta$, we have the uniform bound
			$
			\phi_{CY,t}- \phi_{l,t} \geq  -C |\log |t|| \delta^{1/2n},
			$
			where $C$ is independent of $\delta,l, t$.
		\end{prop}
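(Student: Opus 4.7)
The inequality is immediate on $U_\delta$: the normalisation
\[
c_\alpha^l(t)=\sup_{U_\delta}\frac{1}{|\log|t||}\bigl(l^{-1}\log|S_\alpha|_{h^{\otimes l}}-\phi_{CY,t}\bigr)
\]
forces $l^{-1}\log|S_\alpha|_{h^{\otimes l}}-|\log|t||c_\alpha^l(t)\leq \phi_{CY,t}$ on $U_\delta$ for every $\alpha$, and taking the maximum over $\alpha$ yields $\phi_{l,t}\leq \phi_{CY,t}$ on $U_\delta$. Writing $v:=\phi_{l,t}-\phi_{CY,t}$, we thus have $v\leq 0$ on $U_\delta$, and the remaining task is to upgrade this to the global bound $v\leq C|\log|t||\delta^{1/(2n)}$ on $X_t$.

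Global a priori ingredients are in place: Lemma \ref{lem:Linfty} together with the crude estimate on $c_\alpha^l(t)$ gives $\|v\|_{L^\infty(X_t)}\leq C|\log|t||$; Lemma \ref{lem:Udeltameasure} gives $\mu_t(X_t\setminus U_\delta)\leq C\delta$; and both $\phi_{l,t}$ and $\phi_{CY,t}$ are continuous $\omega_h$-psh potentials in the Kähler class $c_1(L)$. Let $M:=\max(0,\sup_{X_t}v)$ and assume $M>0$. For every $0<s<M$, the super-level set $\{v>s\}$ is contained in $X_t\setminus U_\delta$ since $v\leq 0$ on $U_\delta$. The Bedford--Taylor comparison principle applied to the pair of bounded $\omega_h$-psh potentials $\phi_{l,t}$ and $\phi_{CY,t}+s$ then gives
\[
\int_{\{v>s\}}(\omega_h+dd^c\phi_{l,t})^n \;\leq\; \int_{\{v>s\}}(\omega_h+dd^c\phi_{CY,t})^n \;=\; (L^n)\,\mu_t(\{v>s\}) \;\leq\; C\delta,
\]
using that the Calabi--Yau Monge--Amp\`ere measure is $(L^n)\mu_t$.

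To convert this Monge--Amp\`ere mass estimate into a pointwise bound on $M$, I invoke a Kolodziej-type stability argument. After rescaling both $\omega_h$-psh potentials by $|\log|t||^{-1}$ to normalise their $L^\infty$ norms to $O(1)$, the function $(v-s)/(M-s)$ restricted to $\{v>s\}$ is a legitimate test potential for the Bedford--Taylor Monge--Amp\`ere capacity, producing a capacity bound $\mathrm{cap}_{\omega_h}(\{v>s\})\leq C|\log|t||^n\delta/(M-s)^n$. Combining this with the complementary capacity lower bound of Bedford--Taylor--Kolodziej for a nonempty super-level set of a bounded $\omega_h$-psh potential (which provides $\mathrm{cap}_{\omega_h}(\{v>s\})\geq c(M-s)^n/|\log|t||^n$ after the same rescaling), and setting $s=M/2$, gives $M^{2n}\leq C|\log|t||^{2n}\delta$, i.e.\ $M\leq C|\log|t||\delta^{1/(2n)}$, which is the claimed inequality.

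The main obstacle is making the Kolodziej-type stability argument genuinely uniform in the degeneration parameter $t$: the $L^\infty$ norms of $\phi_{CY,t}$ and $\phi_{l,t}$ both grow linearly in $|\log|t||$, so the standard stability constants must be tracked very carefully through the rescaling. The exponent $1/(2n)$ on $\delta$ (with a linear $|\log|t||$ factor out front) is precisely the output of this bookkeeping: one factor of $n$ comes from the Monge--Amp\`ere operator in the comparison principle, and the second factor of $n$ comes from the Bedford--Taylor capacity lower bound used to extract $M$ from the capacity estimate.
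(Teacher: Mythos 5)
Your first step is exactly the paper's: the definition of $c_\alpha^l(t)$ as a supremum over $U_\delta$ gives $\phi_{l,t}\leq\phi_{CY,t}$ on $U_\delta$, and Lemma \ref{lem:Udeltameasure} bounds $\mu_t(X_t\setminus U_\delta)$ by $C\delta$. The paper then finishes in one stroke by invoking the pluripotential-theoretic stability estimate \cite[Thm. 2.7]{LiFermat}, applied to the rescaled classes $\frac{1}{|\log|t||}c_1(L)$ and made uniform in $t$ by the uniform Skoda inequality of \cite{LiuniformSkoda}; that theorem directly converts ``$\phi_{CY,t}-\phi_{l,t}\geq 0$ off a set of $\mu_t$-measure $\leq C\delta$'' into the global bound with exponent $\delta^{1/2n}$.

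Your attempt to re-derive that stability step is where there is a genuine gap. The comparison-principle inequality $\int_{\{v>s\}}(\omega_h+dd^c\phi_{l,t})^n\leq (L^n)\mu_t(\{v>s\})\leq C\delta$ is fine, and so is the standard capacity upper bound $r^n\,\mathrm{cap}(\{v>s+r\})\leq\int_{\{v>s\}}MA(\phi_{CY,t})$. But the ``complementary capacity lower bound'' $\mathrm{cap}(\{v>s\})\geq c(M-s)^n$ that you use to close the argument is not a theorem of Bedford--Taylor or Ko\l odziej: there is no general lower bound on the capacity of a super-level set in terms of $M-s$ alone, and in any case $v=\phi_{l,t}-\phi_{CY,t}$ is a \emph{difference} of quasi-psh potentials, not itself $\omega_h$-psh, so the hypothesis of any such statement would fail. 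The actual mechanism in Ko\l odziej-type stability is different: one needs a measure--capacity domination for $\mu_t$ (a Skoda-type inequality $\mu_t(E)\leq F(\mathrm{cap}(E))$ with $F$ decaying fast enough), fed into a De Giorgi-style iteration on $s\mapsto\mathrm{cap}(\{v>s\})^{1/n}$, which forces the super-level sets to be empty beyond a threshold $s_0\leq C\delta^{1/2n}$. That domination, \emph{uniformly in the degenerating family} $\mu_t$, is precisely the content of \cite{LiuniformSkoda} and is the hard analytic input; rescaling the potentials by $|\log|t||^{-1}$ normalises their $L^\infty$ norms but does nothing to control the collapsing measures $\mu_t$. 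Without invoking the uniform Skoda inequality (or equivalently the ready-made estimate \cite[Thm. 2.7]{LiFermat}), your argument does not yield a bound uniform in $t$, and the exponent $1/2n$ you extract rests on the unavailable capacity lower bound rather than on the iteration that actually produces it.
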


		\begin{proof}
			By construction $\phi_{CY,t}\geq  \phi_{l,t}$ on $U_\delta$. We apply the pluripotential theoretic estimate \cite[Thm. 2.7]{LiFermat} to the background metric $\omega= \frac{1}{|\log |t||} ( \omega_h+ dd^c \phi_{l,t}  )$, using the uniform Skoda inequality in \cite{LiuniformSkoda}. We
			deduce that the potential of the CY metric $\frac{1}{|\log |t||} ( \omega_h+ dd^c \phi_{CY,t}  )$ in the class $\frac{1}{|\log |t||}c_1(L)$ satisfies the estimate
			\[
			\frac{1}{|\log |t||}  (\phi_{CY,t}- \phi_{l,t})  \geq  -C \mu_t(  \{    \phi_{CY,t}- \phi_{l,t}\leq 0     \}   )^{ 1/2n } \geq -C \mu_t(  X_t\setminus U_\delta  )^{ 1/2n }.
			\]
			But  $\mu_t(   X_t\setminus U_\delta  )\leq C\delta$  by Lemma \ref{lem:Udeltameasure}, hence the result.
		\end{proof}

		\begin{prop}(Potential upper bound)\label{prop:potentialupperbound}
			Let $l\geq l_0$ be large enough, and $\delta>0$ small. For $t$ small enough depending on $\delta,l$, we have the uniform estimate
			$
			\phi_{CY,t}- \phi_{l,t}  \leq 	  \frac{C}{l} |\log |t|| ,
			$
			where $C$ is independent of $l,\delta, t$. 
		\end{prop}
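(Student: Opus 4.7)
The plan is to show that at every $z\in X_t$, some $S_\alpha$ in the finite family collection of Section \ref{sect:filtration} realises the supremum in the definition of $c_\alpha^l(t)$ almost at $z$, up to an error $Cl^{-1}|\log|t||$. First, I apply the Ohsawa--Takegoshi extension (Proposition \ref{OhsawaTakegoshi}) at $z$ to produce $s\in H^0(X_t,lL)$ whose CY-weighted $L^2$-norm is controlled by the normalised pointwise value $(l^{-1}\log|s|_{h^{\otimes l}}-\phi_{CY,t})(z)$ modulo $Cl^{-1}|\log|t||$. Expanding $s=\sum_i a_i^k s_i^{(k)}$ in the local valuative bases and setting $\mu_{i,k}:=\log|a_i^k|/\log|t|$, I invoke the ``from fibre to family'' corollary (Corollary \ref{cor:fromfibretofamily}) to obtain a family section $\tilde{s}=cS_\alpha$, with $|c|\leq C$, satisfying the valuative constraints $|\tilde{a}_i^k(\tau)|\leq C|\tau|^{\mu_{i,k}-\nu}$ uniformly in $\tau$, and the pointwise matching $|t|^\nu\tilde{s}(z)=s(z)$. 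This selects the index $\alpha$ and pins $S_\alpha$ to the Ohsawa--Takegoshi section at $z$.

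The core inequality is obtained by combining two estimates at any $\zeta\in U_\delta$ whose coordinates lie in $O_k$. The triangle inequality together with the valuative constraints on $\tilde{s}$ give the pointwise upper bound $|S_\alpha|_{h^{\otimes l}}(\zeta) \leq C|c|^{-1}|t|^{-\nu}\max_i|a_i^k s_i^{(k)}|_{h^{\otimes l}}(\zeta)$, while the asymptotic orthogonality (Corollary \ref{cor:asymptoticorthogonality2}) combined with the Ohsawa--Takegoshi bound yields, once $t$ is small enough depending on $l,\delta$ so that the error $C(\delta,l)|\log|t||^{1/2}$ is absorbed into $Cl^{-1}|\log|t||$,
\[
\max_i(l^{-1}\log|a_i^k s_i^{(k)}|_{h^{\otimes l}}-\phi_{CY,t})(\zeta) \leq (l^{-1}\log|s|_{h^{\otimes l}}-\phi_{CY,t})(z) + \frac{C}{l}|\log|t||.
\]
Using $|t|^\nu\tilde{s}(z)=s(z)$ to rewrite $l^{-1}\log|s|(z)=l^{-1}\log|c|+l^{-1}\log|S_\alpha|(z)-\frac{\nu}{l}|\log|t||$, the contributions from $\log|c|$ and $\nu|\log|t||$ cancel between the two sides, leaving
\[
(l^{-1}\log|S_\alpha|_{h^{\otimes l}}-\phi_{CY,t})(\zeta) \leq (l^{-1}\log|S_\alpha|_{h^{\otimes l}}-\phi_{CY,t})(z) + \frac{C}{l}|\log|t||.
\]
Taking the supremum over $\zeta\in U_\delta$ identifies the left side with $|\log|t||c_\alpha^l(t)$, and using the trivial lower bound $\phi_{l,t}(z)\geq l^{-1}\log|S_\alpha|_{h^{\otimes l}}(z)-|\log|t||c_\alpha^l(t)$ concludes $\phi_{CY,t}(z)\leq\phi_{l,t}(z)+Cl^{-1}|\log|t||$.

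The main obstacle is absorbing the asymptotic orthogonality error $C(\delta,l)|\log|t||^{1/2}$ from Corollary \ref{cor:asymptoticorthogonality2}: because its constant depends on $\delta$ and $l$, the inequality $C(\delta,l)|\log|t||^{1/2}\leq Cl^{-1}|\log|t||$ forces $t$ to zero at a rate depending on $l$ and $\delta$, exactly matching the quantifier order in the statement. A subsidiary check is that every $\zeta\in U_\delta$ indeed lies over some $O_k$, which is automatic since $U_\delta\subset\tilde{U}'_\delta$ is disjoint from $\mathcal{S}_\delta$; and that the cancellation of $\log|c|$ and $\nu|\log|t||$ goes through without needing any two-sided control of $|c|$, only the upper bound supplied by Corollary \ref{cor:fromfibretofamily}.
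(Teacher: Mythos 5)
Your proposal is correct and follows essentially the same approach as the paper's proof: Ohsawa--Takegoshi, expansion in the local valuative bases $s_i^{(k)}$, the from-fibre-to-family corollary, asymptotic orthogonality, and the triangle inequality, combined in the same way to cancel the $\log|c|$ and $\nu|\log|t||$ terms and conclude via the definition of $c_\alpha^l(t)$. The only cosmetic difference is in the order of operations and in the choice of $\mu_{i,k}$: you set $\mu_{i,k}=\log|a_i^k|/\log|t|$ directly from the expansion of the Ohsawa--Takegoshi section, whereas the paper extracts $\mu_{i,k}$ from the combined Ohsawa--Takegoshi/orthogonality estimate; both yield valid valuative constraints feeding into Corollary \ref{cor:fromfibretofamily}. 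One small point to flag: your choice $\mu_{i,k}=\log|a_i^k|/\log|t|$ is not a real number when $a_i^k=0$, so to invoke Corollary \ref{cor:fromfibretofamily} (which assumes $\mu_{i,k}\in\R$) you should cap those entries at some large finite value $M$ — this is harmless since the constraint $|a_i^k|\leq C|t|^M$ with $a_i^k=0$ is trivially satisfied, and the paper's choice sidesteps the issue automatically.
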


		\begin{proof}
			For any $z\in X_t$, we apply Prop. \ref{OhsawaTakegoshi} to find a section $s$,  such that 
			\[
			\frac{1}{2l}\log \int_{X_t} |s|^2_{h^{\otimes l }} e^{ -2l \phi_{CY,t} } \sqrt{-1}^{n^2}\Omega_t \wedge \overline{\Omega}_t \leq    ( \frac{1}{l} \log |s|_{h^{\otimes l}} -\phi_{CY,t}  )(z) + \frac{C}{l} |\log |t||.
			\]	
			We consider $\zeta\in U_\delta$ whose $(x_i)$-coordinates lies in a given connected component $O_k\subset  Sk(X)\setminus \mathcal{S}$. We write $s=\sum_1^{N(l)} a_i^k s_i^{(k)}$ for $a_i^k\in \C$. By Cor. \ref{cor:asymptoticorthogonality2}, 
			\[
			\begin{split}
				&	\frac{1}{2l}\log 	\int_{X_t}  |\sum_i a_i^k s_i^{(k)}|_{h^{\otimes l}}^2  e^{-2l\phi_{CY,t}}  \sqrt{-1}^{n^2}\Omega_t\wedge \overline{\Omega}_t 
				\\
				\geq &  \max_{1\leq i\leq N(l)}   \sup_{\zeta} \left(l^{-1}\log ( |a_i^k| |s_i^{(k)}|_{h^{\otimes l}}) - \phi_{CY,t}\right) (\zeta)- C(\delta,l)  |\log |t||^{1/2} 
				\\
				= &    \max_{1\leq i\leq N(l)} l^{-1} \log |a_i^k| +  \sup_{\zeta}  ( l^{-1} \log  |s_i^{(k)}|_{h^{\otimes l}} - \phi_{CY,t}) (\zeta)- C(\delta,l)  |\log |t||^{1/2} 
			\end{split}
			\]
			For small $t$ depending on $\delta, l$ we can absort the $ |\log |t||^{1/2}$ term into the $|\log |t||$ term, to obtain
			\begin{equation}\label{eqn:valuativeconstraint}
				\begin{split}
					&	( \frac{1}{l} \log |s|_{h^{\otimes l}} -\phi_{CY,t}  )(z) + \frac{C}{l} |\log |t|| 
					\\
					\geq  & \max_{1\leq i\leq N(l)}    (l^{-1}\log  |a_i^k| +   \sup_{\zeta}  ( l^{-1} \log  |s_i^{(k)}|_{h^{\otimes l}} - \phi_{CY,t}) (\zeta)  .
				\end{split}
			\end{equation}

			We view this key estimate as giving a collection of upper bounds on $|a_i^k|$, namely a prescription of \emph{valuative constraints} (\cf Section \ref{sect:valuativeconstraints})
			\[
			|a_i^k| \leq |t|^{\mu_{i,k} },
			\]
			for suitable real parameters $\mu_{i,k}\in \R$. By Cor. \ref{cor:fromfibretofamily}, we can replace the section $s$ by $c|t|^\nu S_\alpha$ for some $1\leq \alpha\leq N$, such that both the evaluation at $z$ and the valuative constraints are preserved:
			\begin{equation}
				\begin{cases}
					s(z)= c|t|^\nu S_\alpha(z), 
					\\
					c|t|^\nu S_\alpha= \sum_i (a_i^k)_{new} s_i^{(k)} ,\quad  |(a_i^k)_{new}| \leq C(l)|t|^{\mu_{i,k}}.
				\end{cases}
			\end{equation}
			Thus after absorbing $\log C(l)$ into the $\frac{C}{l} |\log |t||$ term, 
			\begin{equation*}
				\begin{split}
					&	( \frac{1}{l} \log |  c |t|^\nu S_\alpha   |_{h^{\otimes l}} -\phi_{CY,t}  )(z) + \frac{C}{l} |\log |t|| 
					\\
					\geq  & \max_{1\leq i\leq N(l)}    (l^{-1}\log  |(a_i^k)_{new}| +   \sup_{\zeta}  ( l^{-1} \log  |s_i^{(k)}|_{h^{\otimes l}} - \phi_{CY,t}) (\zeta)  
					\\
					= &      \sup_{\zeta}  ( l^{-1} \log \max_{1\leq i\leq N(l)}  | (a_i^k)_{new}s_i^{(k)}|_{h^{\otimes l}} - \phi_{CY,t}) (\zeta)  .
				\end{split}
			\end{equation*}

			But by the triangle inequality
			\[
			|c|t|^\nu S_\alpha|_{h^{\otimes l}}  \leq \sum_i  |(a_i^k)_{new} s_i^{(k) }| _{h^{\otimes l}} \leq N(l) \max_{1\leq i\leq N(l)}  | (a_i^k)_{new}s_i^{(k)}|_{h^{\otimes l}} ,
			\]
			hence after absorbing $\log N(l)$ into the $\frac{C}{l} |\log |t||$ term, we obtain
			\[
			( \frac{1}{l} \log | c |t|^\nu S_\alpha   |_{h^{\otimes l}} -\phi_{CY,t}  )(z) + \frac{C}{l} |\log |t||  \geq \sup_{\zeta}  ( l^{-1} \log    	|c |t|^\nu S_\alpha|_{h^{\otimes l}}         - \phi_{CY,t}) (\zeta) .
			\]
			Cancelling out the $l^{-1}\log |c|t|^\nu|$ term from both sides, 
			\[
			( \frac{1}{l} \log | S_\alpha   |_{h^{\otimes l}} -\phi_{CY,t}  )(z) + \frac{C}{l} |\log |t||  \geq \sup_{\zeta}  ( l^{-1} \log    	|S_\alpha|_{h^{\otimes l}}         - \phi_{CY,t}) (\zeta) .
			\]
			This estimate holds for all the connected components $O_k\subset Sk(X)\setminus \mathcal{S}$, whence by the definition of $c_\alpha^l$, we have
			\[
			( \frac{1}{l} \log |  S_\alpha   |_{h^{\otimes l}} -\phi_{CY,t}  )(z) + \frac{C}{l} |\log |t||  \geq \sup_{\zeta\in U_\delta}  ( l^{-1} \log    	| S_\alpha|_{h^{\otimes l}}         - \phi_{CY,t}) (\zeta) = |\log |t|| c_\alpha^l .
			\]
			Thus
			\[
			\phi_{CY,t}(z) \leq l^{-1} \log |S_\alpha|_{h^{\otimes l}}(z) - |\log |t|| c_\alpha^l(t)+   \frac{C}{l} |\log |t|| \leq \phi_{l,t}+   \frac{C}{l} |\log |t|| ,
			\]
			as required. 
		\end{proof}

		\begin{cor}\label{cor:uniformFS}
			(Uniform Fubini-Study approximation)
			Let $l$ be large enough, then for $t$ small enough depending on $l$, we have
			\begin{equation}
				|\phi_{CY,t}- \phi_{l,t} |\leq \frac{C}{l} |\log |t||.
			\end{equation}
		\end{cor}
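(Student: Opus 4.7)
The corollary is a direct consequence of combining the lower bound of Proposition \ref{prop:potentiallowerbound} with the upper bound of Proposition \ref{prop:potentialupperbound}, with a suitable choice of the auxiliary parameter $\delta$.

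The plan is as follows. The upper bound $\phi_{CY,t} - \phi_{l,t} \leq \frac{C}{l}|\log|t||$ from Proposition \ref{prop:potentialupperbound} already has the desired form, with a constant independent of $\delta$, and is valid for $t$ small enough depending on $\delta$ and $l$. So the only issue is to arrange for the lower bound to match this rate. The lower bound of Proposition \ref{prop:potentiallowerbound} reads $\phi_{CY,t} - \phi_{l,t} \geq -C|\log|t||\delta^{1/2n}$ with a constant $C$ independent of $\delta, l, t$. We want the prefactor $C\delta^{1/2n}$ to be dominated by $C/l$; this forces a choice $\delta = \delta(l)$ with $\delta^{1/2n} \lesssim 1/l$, for example $\delta = c\, l^{-2n}$ for a sufficiently small absolute constant $c$.

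With this choice of $\delta$ as a function of $l$, both propositions apply simultaneously once $t$ is small enough depending on $l$ (since the smallness condition on $t$ in Proposition \ref{prop:potentialupperbound} depends on $(\delta, l)$, which is now just a condition depending on $l$). Putting the two bounds together gives
\[
-\frac{C}{l}|\log|t|| \leq \phi_{CY,t} - \phi_{l,t} \leq \frac{C}{l}|\log|t||,
\]
as required.

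There is no real obstacle here beyond bookkeeping: the substantive estimates have already been established in the previous two propositions. The only point worth checking carefully is that the constant $C$ in Proposition \ref{prop:potentiallowerbound} is genuinely independent of $\delta$ (so that tuning $\delta$ down to $l^{-2n}$ does not blow it up), which is precisely what the statement of that proposition asserts, as it relies on the uniform Skoda inequality of \cite{LiuniformSkoda} applied to the fixed class $\frac{1}{|\log|t||}c_1(L)$.
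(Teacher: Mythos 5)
Your proof is correct and matches the paper's argument exactly: both choose $\delta$ so that $\delta^{1/2n} \lesssim 1/l$, then combine Propositions \ref{prop:potentiallowerbound} and \ref{prop:potentialupperbound}, noting that the smallness condition on $t$ now depends only on $l$.
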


		\begin{proof}
			It suffices to choose $\delta$ such that $\delta^{1/2n} \leq l^{-1}$, and let $t$ be small enough depending on $l,\delta$. Then Prop. \ref{prop:potentiallowerbound}, \ref{prop:potentialupperbound} imply the result. 
		\end{proof}

		\subsection{Finishing the proof of Thm. \ref{thm:hybridconvergence}}

		For any fixed large $l$, by the uniform $L^\infty$ estimate in Lemma \ref{lem:Linfty}, we see that $|c_\alpha^l(t)|$ is bounded independent of $t$. By a \emph{diagonal argument}, for every sequence of $t=t_k\to 0$, we can find some subsequence, such that for any $l\geq l_0$, and any $\alpha=1,\ldots N$, we have the subsequential convergence 
		$
		c_\alpha^l(t)\to c_\alpha^l(0),
		$
		for some $c_\alpha^l(0)\in \R$ depending on the subsequence.

		Applying Section \ref{NAMAcxMA} and Cor. \ref{cor:uniformFS}, we then deduce that the $C^0$-limit of $\frac{1}{|\log |t||}\phi_{CY,t}$ exists along this given subsequence, and defines a limiting function $\phi_0\in \text{CPSH}(X,\mathcal{L})$. In our normalisation convention $\sup_{X_t} \phi_{CY,t}=0$, so $\sup_{X_K^{an}} \phi_0=0$. 
		Moreover, the complex MA measures on $X_t$ associated with $\phi_{CY,t}$ converge to the NA MA measure of the limiting function $\phi_0\in \text{CPSH}(X,\mathcal{L})$. But the hybrid topology limit of the complex MA measure $\mu_t$ is the normalised Lebesgue measure $\mu_0$ supported on $Sk(X)\subset X_K^{an}$ by Thm. \ref*{Volumeconvergence}. 	Thus by Thm. \ref{thm:BFJ}, $\phi_0$ is the potential of the  \emph{unique} NA CY metric $\norm{\cdot}_{CY,0}$ with the normalisation convention $\sup_{X_K^{an}} \phi_0=0$. By this uniqueness, the limit $\phi_0 $ is in fact independent of the subsequence. This implies that $\frac{1}{|\log |t||}\phi_{CY,t}$ converge in the $C^0$ hybrid topology to $\phi_0$ on $X_K^{an}$ as $t\to 0$.

			\begin{rmk}
			The above proof does not a priori assume the existence of the NA CY metric, but rather deduces its existence by taking the potential theoretic limit of the complex geometric CY metrics. 
			
			We also note that the CY metric condition is only used in the potential lower bound Prop. \ref{prop:potentiallowerbound} and the uniform $L^\infty$ bound Lemma \ref{lem:Linfty}, and the same estimate works for the more general complex MA equations with RHS $f_t\mu_t$, as long as $\norm{f_t}_{L^\infty}\leq C$ independent of $t$. Furthermore, if we assume the weak convergence of measures $f_t\mu_t\to f_0 \mu_0$, then the same argument shows the hybrid topology $C^0$-convergence of the normalised K\"ahler potentials, to the Boucksom-Favre-Jonsson solution of the NA MA equation with RHS measure $f_0\mu_0$. 
		\end{rmk}

		\begin{rmk}
			Pille-Schneider \cite[Cor. 5.2]{PilleSchneider} has also used the uniqueness of the NA CY metric to deduce some convergence property for the CY potentials independent of the subsequence .
		\end{rmk}

		\section{Optimal transport interpretation}\label{sect:optimaltransport}

		The goal of this section is to give an interpretation of the NA MA solution in terms of the Kontorovich dual formulation of an optimal transport problem, assuming the existence of a canonical basis of sections with good properties.

		\subsection{Assumptions on the valuative independent basis}\label{sect:assumptions}

		We shall assume the following setting:
		
		\begin{itemize}
			\item The polarised degeneration family of Calabi-Yau manifolds $X\to \mathbb{D}^*$ is a large complex structure limit, namely $\dim Sk(X)=n$.
			
			\item There is a (not necessarily SNC)\footnote{The model $(\mathcal{X},\mathcal{L})$ from the Gross-Siebert construction is usually not SNC. In particular, this $(\mathcal{X},\mathcal{L})$ has no relation to the semistable SNC model in Section \ref{sect:potentialconvergence} denoted by the same notation.} model $(\mathcal{X}, \mathcal{L})\to S$ of the polarised degeneration family $(X,L)$, and we are given $N(l)=\dim H^0(X_t,l L)   $  regular sections $\theta_p^l\in H^0(\mathcal{X}, l\mathcal{L})$, whose restrictions to the central fibre are a basis of $H^0(\mathcal{X}_0,l\mathcal{L})$. We assume that for any $l\geq 1$, the sections $\theta_p^l$ satisfy the \emph{valuative independence condition}.
			
			\item There is an $n$-dimensional integral polyhedral complex $B$, such that the sections $\theta_p^l$ are indexed by the rational points $p\in B(l^{-1}\Z)$. Moreover, under the multiplication map  $H^0(\mathcal{X}, l\mathcal{L})\otimes H^0(\mathcal{X}, l'\mathcal{L})\to H^0(\mathcal{X}, (l+l')\mathcal{L})$, we have
			\[
			\theta_p^l \theta_{p'}^{l'}= \theta^{l+l'}_{ \frac{lp+l'p'}{l+l'} } \mod t H^0(\mathcal{X}, (l+l')\mathcal{L}),
			\]
			for $p,p'$ on the same face of the integral polyhedral complex. We notice that if $p\in B(l^{-1}\Z), p'\in B(l'^{-1}\Z)$, then $\frac{lp+l'p'}{l+l'} \in B((l+l')^{-1}\Z)$.

		\end{itemize}

		We comment that these assumptions are directly motivated by the properties of theta functions in the Gross-Siebert programme, except for the valuative independence property, which is unknown in most cases (\cf Section \ref{sect:theta}).

		\subsection{Cost function}

		Our next goal is to extract a cost function $c: Sk(X)\times B\to \R$ from the valuation of $\theta_p^l$ on the essential skeleton.

		Using the local trivialising sections of the model line bundle $\mathcal{L}\to \mathcal{X}$, we can regard $\theta_p^l$ as local holomorphic functions on $X_K^{an}$, so for each $x\in Sk(X)\subset \Delta_{\mathcal{X}}\subset X_K^{an}$, we can take the valuation $val_x(\theta_p^l)$. This defines the real valued functions $val(\theta_p^l): Sk(X)\to \R$. By assumption $\theta_p^l$ is a holomorphic section on $\mathcal{L}^{\otimes l}\to \mathcal{X}$, so $val(\theta_p^l)\geq 0$, and since $\theta_p^l$ does not vanish identically on the central fibre, we know $\min_{x\in \Delta_\mathcal{X}} val_x(\theta_p^l)=0$.

		\begin{lem}\cite[Prop. 3.12]{Boucksomsemipositive}\label{lem:convexity}
			For each $p, l$, the function $-val(\theta_p^l)$ is a convex function on each face of $Sk(X)\subset \Delta_{\mathcal{X}}$.  
		\end{lem}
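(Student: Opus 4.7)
The plan is to reduce convexity to the local Taylor expansion characterization of monomial valuations already recalled in Section~\ref{sect:equivalentcharacterisation}, without needing to invoke the full machinery of \cite{Boucksomsemipositive}. Fix a face $\Delta_J \subset Sk(X)$ corresponding to an intersection stratum $E_J = \cap_{i \in J} E_i$, with local defining equations $z_0, \ldots, z_k$ for the $E_i$ with $i \in J$, and a local trivializing section $\tau$ of $\mathcal{L}^{\otimes l}$ near the generic point of $E_J$. Since $\theta_p^l$ is a regular section of $l\mathcal{L}$ on the model $\mathcal{X}$, the quotient $\theta_p^l/\tau$ is a local regular function, which I would Taylor expand as
\[
\theta_p^l/\tau = \sum_{\alpha \in \mathbb{N}^{|J|}} f_{p,\alpha}\, z_0^{\alpha_0} \cdots z_k^{\alpha_k},
\]
where $f_{p,\alpha}$ is either a unit or zero.

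The next step is to apply the defining formula for the monomial valuation at $x \in \Delta_J$, which gives
\[
val_x(\theta_p^l) = \min\{\langle x, \alpha\rangle : f_{p,\alpha} \neq 0\}.
\]
Because each map $x \mapsto \langle x, \alpha\rangle$ is affine linear in the barycentric coordinates of $\Delta_J$, the right-hand side is a pointwise minimum of a (possibly infinite, but locally finite on compacts thanks to the Lipschitz bound of Lemma~\ref{lem:Lip}) family of affine linear functions. Taking minima preserves concavity, so $val_x(\theta_p^l)$ is concave on $\Delta_J$, and negation yields convexity of $-val(\theta_p^l)$ on each face of $Sk(X)$. The same argument applies verbatim to any face of $\Delta_{\mathcal{X}}$ containing a face of $Sk(X)$, giving the claimed statement.

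The only point requiring a small remark is that the expression depends on auxiliary choices (the trivializing section $\tau$ and the local defining equations $z_i$); changing $\tau$ shifts all valuations by a common affine linear function of $x$, and changing the $z_i$ permutes the set of exponents without altering the collection of affine linear functions appearing in the min. Thus concavity of $val_x(\theta_p^l)$, up to the affine shift encoded in the choice of model metric on $\mathcal{L}$, is intrinsic. I expect no real obstacle here: the argument is purely combinatorial once the Taylor expansion is written down, and matches the abstract fact cited from \cite[Prop. 3.12]{Boucksomsemipositive} that the valuative norm of a regular section of a model line bundle defines a (semi)positive model potential, whose restriction to the dual complex is concave on each face.
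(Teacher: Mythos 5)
Your proposal is correct and, notably, self-contained, whereas the paper simply cites \cite[Prop.~3.12]{Boucksomsemipositive} without reproducing an argument. The key observation---that the quasi-monomial valuation $val_x(\theta_p^l)$ is, on each face $\Delta_J$, the pointwise minimum of the affine-linear functions $x\mapsto\langle x,\alpha\rangle$ ranging over exponents $\alpha$ with nonzero coefficient in the Taylor expansion, and that a pointwise minimum of affine-linear (hence concave) functions is concave---is exactly the elementary content behind the cited BFJ proposition specialised to a model metric of the form $\log\norm{\theta_p^l}$. So your approach is a direct unpacking of the definition, while the paper's route is to invoke the general convexity of restrictions of semipositive model potentials to the dual complex; yours is more elementary and transparent for this special case, while BFJ's covers arbitrary semipositive model functions, not just logs of regular sections.

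Two minor remarks. First, the appeal to Lemma~\ref{lem:Lip} to get local finiteness of the minimum is unnecessary (and risks looking circular, since Lemma~\ref{lem:Lip} rests on the BFJ equicontinuity result): since $\theta_p^l$ is a regular section of $l\mathcal{L}$ on $\mathcal{X}$, the exponents $\alpha$ lie in $\mathbb{N}^{|J|}$, so $\langle x,\alpha\rangle\geq 0$ for all $x\in\Delta_J$, and the infimum is finite; concavity of a finite infimum of affine functions needs no local finiteness at all. Second, your remark on the choice of trivialisation $\tau$ is slightly overstated: two trivialisations coming from the same model $\mathcal{L}$ differ by a local unit $u$, and $val_x(u)=0$ for all $x\in\Delta_J$, so the valuation function is literally unchanged (no affine shift); the affine shift only arises if one compares to a different model line bundle $\mathcal{L}'$. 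Neither point affects the correctness of the convexity conclusion.
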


		\begin{lem}
			We have a uniform $L^\infty$-bound $\sup_{Sk(X)} |\frac{1}{l}val(\theta_p^l) |\leq C$, and a uniform Lipschitz bound
			\[
			|\frac{1}{l}val(\theta_p^l)(x)- \frac{1}{l}val(\theta_p^l)(x')|\leq C|x-x'|
			\]
			for $x,x'$ on the same face of $Sk(X)$. Here the constant is independent of $l, p$.
		\end{lem}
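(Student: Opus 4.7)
The plan is to deduce both bounds from Lemma \ref{lem:Lip} combined with the fact that $\theta_p^l$ is a regular section whose restriction to the central fibre is nonzero. Because the model $(\mathcal{X},\mathcal{L})$ in the assumptions need not be SNC, my first step would be to choose a log resolution providing an SNC model $\mathcal{X}'$ dominating $\mathcal{X}$, together with the pulled-back line bundle model $\mathcal{L}'$. The essential skeleton $Sk(X)$ embeds into $\Delta_{\mathcal{X}'}$, and the valuations $val_x(\theta_p^l)$ computed using local trivialisations of $\mathcal{L}$ versus of $\mathcal{L}'$ differ only by a globally bounded correction that is independent of $p$ and $l$ (coming from the comparison of the two local trivialisations at the finitely many strata of $\mathcal{X}'_0$). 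Hence it suffices to prove both estimates for valuations computed on the SNC model $\mathcal{X}'$.

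For the Lipschitz bound I would apply Lemma \ref{lem:Lip} directly to the sections $s = \theta_p^l \in H^0(X_K^{an}, lL)$. On each face of $\Delta_{\mathcal{X}'}$ the lemma produces a Lipschitz constant for $l^{-1}val(s)$ which by construction is independent of both $s$ and $l$, and in particular independent of $p$. Restricting to faces of the sub-complex $Sk(X)$ and taking the maximum over the finitely many faces gives the claimed Lipschitz estimate.

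For the $L^\infty$ bound, regularity of $\theta_p^l$ on $\mathcal{X}'$ gives the lower bound $\frac{1}{l}val_x(\theta_p^l) \geq 0$ on all of $\Delta_{\mathcal{X}'}$. For the matching upper bound, I would exploit the assumption that $\theta_p^l|_{\mathcal{X}_0}$ is part of a basis of $H^0(\mathcal{X}_0, l\mathcal{L})$: this forces $\theta_p^l$ to restrict nontrivially to at least one irreducible component of $\mathcal{X}'_0$, which translates into a vertex of $\Delta_{\mathcal{X}'}$ at which $val(\theta_p^l) = 0$. Chaining the Lipschitz bound from the previous paragraph across the finitely many faces of the compact complex $\Delta_{\mathcal{X}'}$, whose piecewise-linear diameter is uniformly bounded, then yields $\sup_{\Delta_{\mathcal{X}'}} \frac{1}{l}val(\theta_p^l) \leq C$ uniformly in $p$ and $l$, and restriction gives the desired bound on $Sk(X)$.

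The main point to check is that the Lipschitz constant from Lemma \ref{lem:Lip} really has no hidden dependence on the section: its proof embeds $l^{-1}\log|s|$ into a Fubini--Study potential via fixed auxiliary generating sections of some $l_0 L$ and then invokes equicontinuity of $\mathrm{CPSH}$ potentials. Since the auxiliary data is chosen once and for all (depending only on the SNC model), the resulting constant is genuinely independent of the particular $\theta_p^l$ being tested, which is exactly what makes the final bounds uniform in $p$ and $l$.
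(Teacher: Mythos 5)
Your proposal is correct and follows essentially the same route as the paper, whose proof consists of the single sentence ``This is a special case of Lemma \ref{lem:Lip}'' — relying silently on the preceding paragraph for the observations $val(\theta_p^l)\geq 0$ and $\min_{\Delta_{\mathcal X}} val(\theta_p^l)=0$. You make explicit the two points the paper compresses: passing to an SNC model dominating the (generally non-SNC) Gross--Siebert model before invoking Lemma \ref{lem:Lip}, and chaining the per-face Lipschitz bound across the finitely many faces of the compact dual complex to upgrade the minimum-zero property to an $L^\infty$ bound.

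One small imprecision: you claim that $\theta_p^l|_{\mathcal{X}_0}\neq 0$ forces $val=0$ at some vertex of $\Delta_{\mathcal{X}'}$. When the central fibre $\mathcal{X}'_0=\sum b_i E_i$ has multiplicities $b_i>1$, nonvanishing on the scheme-theoretic central fibre only gives $ord_{E_i}(\theta_p^l)<b_i$ for some $i$, i.e.\ the divisorial valuation (normalised so $val(t)=1$) is $<1$ at that vertex, not necessarily $0$. But $\min val < 1$ is still sufficient: it is absorbed after dividing by $l$, and the chaining argument gives $\sup \frac{1}{l}val(\theta_p^l)\leq \frac{1}{l}+C\cdot\mathrm{diam}\leq C'$. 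So the conclusion stands, and after passing to a semistable SNC model (as the paper allows) the multiplicities are all $1$ and your statement is literally correct.
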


		\begin{proof}
			This is a special case of Lemma \ref{lem:Lip}.
		\end{proof}

		We are interested in the limiting behaviour of $val_x(\theta_p^l)$ in the $l\to +\infty$ limit. This will be based on the following simple observation:

		\begin{lem}\label{lem:semigroup}
			For any $x\in Sk(X)$, $l,l'\geq 1$, and $p,p'$ on the same face of $Sk(X)$, we have 
			\[
			val_x(\theta_p^l)+ val_x(\theta_{p'}^{l'} ) \leq  val_x(  \theta^{l+l'}_{ \frac{lp+l'p'}{l+l'} }).
			\]
		\end{lem}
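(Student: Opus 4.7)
The plan is to combine the multiplication assumption from Section \ref{sect:assumptions} with the valuative independence condition, and then use multiplicativity of the valuation.

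First, I would use the multiplication rule to write
\[
\theta_p^l \theta_{p'}^{l'} = \theta^{l+l'}_{(lp+l'p')/(l+l')} + R,
\]
where the remainder $R$ lies in $tH^0(\mathcal{X},(l+l')\mathcal{L})$. Since the sections $\{\theta_q^{l+l'}\}_{q\in B((l+l')^{-1}\Z)}$ restrict to a basis of $H^0(\mathcal{X}_0,(l+l')\mathcal{L})$, Nakayama-style reasoning gives that $R$ can be expanded as an $R$-linear combination $R=\sum_q t b_q(t) \theta_q^{l+l'}$ with $b_q(t)\in R$. Thus altogether
\[
\theta_p^l \theta_{p'}^{l'} = \sum_q a_q(t)\, \theta_q^{l+l'},
\]
where $a_{(lp+l'p')/(l+l')}(t) \equiv 1 \pmod t$ (hence has valuation $0$) and all other $a_q(t)$ satisfy $val(a_q)\geq 1$.

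Next I would apply the valuative independence condition at $x\in Sk(X)$ (Def.~\ref{Def:valuativeindependence}), which is legitimate since we have assumed it holds for the basis $\{\theta_q^{l+l'}\}$. This yields
\[
val_x\bigl(\theta_p^l \theta_{p'}^{l'}\bigr) = \min_q\bigl( val(a_q) + val_x(\theta_q^{l+l'}) \bigr).
\]
In particular, choosing the specific term $q=(lp+l'p')/(l+l')$, for which $val(a_q)=0$, gives the upper bound
\[
val_x\bigl(\theta_p^l \theta_{p'}^{l'}\bigr) \leq val_x\bigl(\theta^{l+l'}_{(lp+l'p')/(l+l')}\bigr).
\]

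Finally I would use multiplicativity of the semivaluation $val_x$ (which holds for any point in the Berkovich space, since $|fg|=|f||g|$ by definition), namely
\[
val_x\bigl(\theta_p^l \theta_{p'}^{l'}\bigr) = val_x(\theta_p^l) + val_x(\theta_{p'}^{l'}),
\]
and combine the two displays to obtain the desired inequality. The argument is essentially a one-liner once the assumptions are unwound; the only subtle point is justifying the expansion of the remainder $R$ as a $tR$-linear combination of the basis $\{\theta_q^{l+l'}\}$, which follows either from the restriction-to-central-fibre basis assumption together with flatness/Nakayama, or directly from linear independence over $K$ (Lemma \ref{lem:linearindependence}) and the observation that $R\in tH^0(\mathcal{X},(l+l')\mathcal{L})$.
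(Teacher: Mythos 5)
Your proof is correct and follows essentially the same route as the paper: expand the product in the $\theta$-basis of level $l+l'$, read off the leading coefficient from the multiplication rule, apply valuative independence, and use multiplicativity of the semivaluation. The only difference is that you supply the (valid) Nakayama-style justification for the $R$-linear expansion of the remainder, which the paper implicitly takes for granted.
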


		\begin{proof}
			We expand the product as a linear combination,
			\[
			\theta_p^l \theta_{p'}^{l'}= \sum_{r\in B((l+l')^{-1}\Z) } \alpha_{p,p',r} \theta_r^{l+l'},
			\]
			so that by the valuative independence condition,
			\[
			val_x(\theta_p^l)+ val_x(\theta_{p'}^{l'} )= val_x(\theta_p^l \theta_{p'}^{l'})=\min_{ r\in B((l+l')^{-1}\Z) } (val_x(\theta_r^{l+l'}) +val( \alpha_{p,p',r} )).
			\]
			Now by the assumptions, the coefficient 
			\[
			\alpha_{p,p',r} \in \begin{cases}
				1+  t\C[\![t]\!],\quad r= \frac{lp+l'p'}{l+l'},
				\\
				t\C[\![t]\!],\quad\text{otherwise}.
			\end{cases}
			\]
			thus the valuation $val( \alpha_{p,p',     \frac{lp+l'p'}{l+l'}  } )=0$, whence $val_x(\theta_p^l)+ val_x(\theta_{p'}^{l'} )\leq val_x(\theta_{   \frac{lp+l'p'}{l+l'}  }^{l+l'})$ as required. 
		\end{proof}

		Given a top dimensional face $\Delta_J\subset B$, we can associate the semigroup $\Gamma_J=\bigcup_{l\geq 0} 
		\Delta_J(l^{-1}\Z)\times 
		\{l\}$, where the semigroup product of $(p,l)$ and $(p',l')$ is $( \frac{lp+l'p'}{l+l'}, l+l' )$. A real valued function on a semigroup $F:\Gamma\to \R$ is called \emph{subadditive}, if
		\[
		F(a)+ F(b) 
		\geq F(a+b).
		\]
		Lemma \ref{lem:semigroup} says precisely that for any $x
		\in Sk(X)$, the function $-val_x( \theta_p^l)$ is subadditive on $\Gamma_J$. Together with the uniform $L^\infty$ bound in Lemma \ref{lem:Lip}, we can apply \cite[Thm. 3.1]{WittNystrom}, to show that for any sequence $p_l\in B(l^{-1}\Z)$ converging to any given point $p$ in the interior of a top dimensional face of $B$, the \emph{asymptotic limit}
		\[
		-\lim_{l\to +\infty} l^{-1}val_x( \theta_{p_l}^l)
		\]
		exists, and defines a function $c(x,p)$ depending only on $x,p$, which must be \emph{convex} in $p$ on the face of $B$. By passing Lemma \ref{lem:convexity}, \ref{lem:Lip} to the $l\to +\infty$ limit, we know that $c(x,p)$ is bounded in $L^\infty$, uniformly Lipschitz in $x$, and convex in $x$ on each face of $Sk(X)$. From the subadditivity again, we have
		\begin{equation}\label{eqn:valsmallerthanc}
			- l^{-1}val_x(\theta_p^l)\geq c(x,p),\quad \forall l\geq 1, \quad x\in Sk(X),\quad  p\in B(l^{-1}\Z).
		\end{equation}
		We will refer to $c(x,p)$ as the \emph{cost function}, following the optimal transport terminology.

		\begin{rmk}
			As a caveat, we have only defined $c(x,p)$ for $p$ in the interior of the $n$-dimensional faces of $B$, and we do not know if $c(x,p)$ extends to a continuous function on $Sk(X)\times B$. This will cause some technical complication later.
		\end{rmk}

		\subsection{c-transform and variational problem}

		Given the cost function $c(x,p)$, we can define the \emph{$c$-transform} for bounded functions $\phi: Sk(X)\to \R$ and $\psi: B\to \R$:
		\begin{equation}\label{eqn:ctransform}
			\psi^c(x)= \sup_{p\in B}c(x,p) - \psi(p),\quad
			\phi^c(p)= \sup_{x\in Sk(X)} c(x,p) -\phi(x).
		\end{equation}
		This can be viewed as a generalisation of the Legendre transform, by replacing vector spaces with polyhedral complexes, and replacing the standard pairing between dual vector spaces with the cost function $c(x,p)$. It follows from formal properties that

		\begin{lem}\label{lem:Pc}
			For any $\phi\in L^\infty(Sk(X))$ and $\psi\in L^\infty(B)$, we have $(\phi^c)^c\leq \phi$ and $(\psi^c)^c \leq \psi$. For $\phi_1,\phi_2\in L^\infty(Sk(X))$, we have $\norm{\phi_1^c-\phi_2^c}_{L^\infty}\leq \norm{\phi_1-\phi_2}_{L^\infty}$, and likewise for functions in $L^\infty(B)$.

			Moreover, for $\phi, \psi$ in the image of the $c$-transform, the $c$-transform is involutive: $(\phi^c)^c=\phi$ and $(\psi^c)^c=\psi$. We shall denote this class of functions $\phi$ as $\mathcal{P}_c$.
		\end{lem}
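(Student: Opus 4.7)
The plan is to observe that Lemma \ref{lem:Pc} is a purely formal consequence of the definitions in (\ref{eqn:ctransform}), echoing the classical properties of the Legendre transform. No fine information about the cost function $c$ is used beyond boundedness, so I would prove all three assertions by direct supremum manipulation.

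First, for the inequality $(\phi^c)^c \leq \phi$, I would unwind the nested suprema,
\[
(\phi^c)^c(x) = \sup_{p\in B}\Bigl[c(x,p) - \sup_{x'\in Sk(X)}\bigl(c(x',p) - \phi(x')\bigr)\Bigr] = \sup_{p}\inf_{x'}\bigl[c(x,p) - c(x',p) + \phi(x')\bigr],
\]
and then take $x' = x$ in the inner infimum to bound the bracket by $\phi(x)$ uniformly in $p$. The analogous bound $(\psi^c)^c \leq \psi$ follows by the same argument with the roles of $x$ and $p$ swapped.

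For the Lipschitz (contraction) property, I would simply note that for any two bounded $\psi_1, \psi_2$ and fixed $x$, the functions $p \mapsto c(x,p) - \psi_i(p)$ differ pointwise by at most $\norm{\psi_1-\psi_2}_{L^\infty}$, and taking the supremum in $p$ preserves this bound, yielding $|\psi_1^c(x) - \psi_2^c(x)|\leq \norm{\psi_1-\psi_2}_{L^\infty}$ at every $x$. The corresponding estimate for $\phi_1^c, \phi_2^c$ is identical.

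Finally, for involutivity on $\mathcal{P}_c$, I would write $\phi = \tilde{\psi}^c$ and apply the first part to $\tilde{\psi}$ to obtain $\phi^c = (\tilde{\psi}^c)^c \leq \tilde{\psi}$. Since $\psi \mapsto \psi^c$ is order-reversing (the supremum defining $\psi^c(x)$ is monotone decreasing in $\psi$), this yields $(\phi^c)^c \geq \tilde{\psi}^c = \phi$, and combining with $(\phi^c)^c \leq \phi$ from the first part forces equality; the case $\psi = \tilde{\phi}^c$ is symmetric. I do not anticipate any substantive obstacles: the argument is elementary supremum juggling. The only mild subtlety is that $c(x,p)$ has a priori only been defined on $Sk(X)$ times the union of interiors of the top-dimensional faces of $B$, but since $c(x,\cdot)$ is $L^\infty$-bounded and Lipschitz within each face, the $c$-transform is unambiguous when interpreted as a supremum over this dense open subset.
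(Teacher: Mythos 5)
Your argument is correct and is the standard formal proof that the paper omits (it simply asserts the lemma "follows from formal properties" without a written proof). The three steps — the sup/inf manipulation with $x'=x$ for $(\phi^c)^c\leq\phi$, the observation that $\sup$ is a $1$-Lipschitz operation on uniformly perturbed families for the contraction property, and the order-reversal argument combined with the first part for involutivity on the image of the $c$-transform — are exactly the right ones, and your closing remark about interpreting the supremum over the dense subset where $c$ is defined correctly addresses the only real subtlety (flagged by the paper's own caveat remark about the domain of $c$).
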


		\begin{lem}\label{lem:uniformcontinuityPc}
			The functions $\phi\in \mathcal{P}_c$ have uniform Lipschitz continuity on $Sk(X)$.
		\end{lem}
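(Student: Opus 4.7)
The plan is to read off the Lipschitz property of $\phi \in \mathcal{P}_c$ directly from the corresponding property of the cost function $c(x,p)$ in the $x$ variable, using the standard argument that the supremum of a family of uniformly Lipschitz functions is Lipschitz with the same constant. Since by Lemma \ref{lem:Pc} every $\phi \in \mathcal{P}_c$ has the form $\phi = \psi^c$ for some $\psi \in L^\infty(B)$, we can write
\[
\phi(x) = \sup_{p \in B} \bigl( c(x,p) - \psi(p) \bigr).
\]
Recall that before the statement of the lemma, the paper established (by passing Lemma \ref{lem:Lip} to the $l \to +\infty$ limit) that $c(x,p)$ is uniformly Lipschitz in $x$ on each face $\Delta_J \subset Sk(X)$, with a Lipschitz constant $L$ independent of $p$.

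First, I would fix a face $\Delta_J$ of $Sk(X)$ and two points $x, x' \in \Delta_J$. For any $\varepsilon > 0$, pick $p_\varepsilon \in B$ (with $c(x,p_\varepsilon)$ well defined, i.e.\ in the interior of a top-dimensional face of $B$, which suffices since the sup can be approximated there) such that
\[
\phi(x) \leq c(x, p_\varepsilon) - \psi(p_\varepsilon) + \varepsilon.
\]
Then
\[
\phi(x) - \phi(x') \leq \bigl( c(x,p_\varepsilon) - \psi(p_\varepsilon) + \varepsilon \bigr) - \bigl( c(x',p_\varepsilon) - \psi(p_\varepsilon) \bigr) = c(x,p_\varepsilon) - c(x',p_\varepsilon) + \varepsilon \leq L|x - x'| + \varepsilon.
\]
Letting $\varepsilon \to 0$ and interchanging the roles of $x$ and $x'$ yields $|\phi(x) - \phi(x')| \leq L|x-x'|$ on each face, with $L$ independent of the face since the uniform Lipschitz bound on $c(\cdot,p)$ is itself face-independent by Lemma \ref{lem:Lip}.

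The only real subtlety is the caveat mentioned just before the statement: $c(x,p)$ is a priori only defined for $p$ in the interior of top-dimensional faces of $B$, so one must interpret $\sup_{p \in B}$ as a sup over this subset. This does not affect the Lipschitz argument, since the inequality $c(x,p_\varepsilon) - c(x',p_\varepsilon) \leq L|x-x'|$ only uses values of $c$ where it is defined. I do not expect a hard obstacle here; the whole argument is a routine application of the $\sup$-preserves-Lipschitz principle, with the real content sitting in the previously established uniform Lipschitz bound on $c(\cdot,p)$.
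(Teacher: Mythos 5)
Your proposal is correct and takes essentially the same approach as the paper: the paper's proof simply states that the result ``follows formally'' from the uniform-in-$p$ Lipschitz estimate $\sup_{p\in B} |c(x,p)-c(x',p)|\leq C|x-x'|$ on each face, and your $\varepsilon$-perturbation argument is exactly the standard expansion of that ``formal'' step.
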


		\begin{proof}
			This follows formally from the uniform Lipschitz estimate
			\[
			\sup_{p\in B} |c(x,p)- c(x',p)|\leq C|x-x'|\]
			for any $x,x'$ on the same face of $Sk(X)$.
		\end{proof}

		The essential skeleton $Sk(X)$ carries the normalised Lebesgue measure $\mu_0$, and $B$ carries a Lebesgue measure induced from its integral structure, which we can normalise to a probability measure $\nu_0$ by dividing out the total volume. This leads to the following \emph{variational problem}: among functions $\phi\in \mathcal{P}_c$, minimise the functional
		\begin{equation}\label{eqn:functionalFmu0}
			\mathcal{F}_{\mu_0}(\phi)=  \int_{Sk(X)} \phi d\mu_0+ \int_{B} \phi^c d\nu_0.
		\end{equation}
		We observe that for any constant $a\in \R$, then $(\phi+a)^c= \phi^c-a$, so $\mathcal{F}_{\mu_0}(\phi)= \mathcal{F}_{\mu_0}(\phi+a)$. This variational problem is known as the \emph{Kontorovich dual formulation of optimal transport} with cost function $c(x,p)$ between the measures $(Sk(X),\mu_0)$ and $(B,\nu_0)$. Some nice background discussions can be found in \cite{Hultgren2}. The following existence result is quite standard.

		\begin{lem}
			There exists a minimiser inside $\mathcal{P}_c$ for the functional $\mathcal{F}_{\mu_0}$.  
		\end{lem}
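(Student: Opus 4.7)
The plan is to apply the direct method of the calculus of variations, using the translation invariance $\mathcal{F}_{\mu_0}(\phi+a) = \mathcal{F}_{\mu_0}(\phi)$ to control non-compactness, and then exploiting the uniform Lipschitz bound on $\mathcal{P}_c$ together with the $L^\infty$-contractivity of the $c$-transform.

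First, I would fix a base point $x_0 \in Sk(X)$ and normalize any minimizing sequence $\phi_k \in \mathcal{P}_c$ so that $\phi_k(x_0) = 0$; this is harmless by translation invariance. By Lemma \ref{lem:uniformcontinuityPc} the $\phi_k$ are uniformly Lipschitz on each face of $Sk(X)$, and since $Sk(X)$ is a compact (connected) polyhedral complex this yields a uniform $L^\infty$ bound $\norm{\phi_k}_{L^\infty(Sk(X))} \leq C$ after the normalization. By Arzel\`a-Ascoli, a subsequence converges uniformly to some continuous $\phi_\infty$ on $Sk(X)$.

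Next I would verify that $\phi_\infty \in \mathcal{P}_c$ and that the functional passes to the limit. By the $L^\infty$-contractivity of the $c$-transform in Lemma \ref{lem:Pc}, we have $\norm{\phi_k^c - \phi_\infty^c}_{L^\infty(B)} \leq \norm{\phi_k - \phi_\infty}_{L^\infty(Sk(X))} \to 0$, and iterating, $(\phi_k^c)^c \to (\phi_\infty^c)^c$ uniformly on $Sk(X)$. Since $\phi_k \in \mathcal{P}_c$ means $(\phi_k^c)^c = \phi_k$, passing to the limit gives $(\phi_\infty^c)^c = \phi_\infty$, so $\phi_\infty \in \mathcal{P}_c$. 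Because $\mu_0$ and $\nu_0$ are probability measures and both $\phi_k \to \phi_\infty$ on $Sk(X)$ and $\phi_k^c \to \phi_\infty^c$ on $B$ uniformly, we conclude $\mathcal{F}_{\mu_0}(\phi_k) \to \mathcal{F}_{\mu_0}(\phi_\infty)$, so $\phi_\infty$ attains the infimum.

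The main subtlety to handle is that $c(x,p)$ is a priori only defined when $p$ lies in the interior of a top-dimensional face of $B$, so the supremum in $\psi^c(x) = \sup_{p \in B} c(x,p) - \psi(p)$ must be interpreted with some care. One can either restrict the supremum to the union of open top-dimensional faces (which is a dense open subset of $B$), or extend $c(\cdot,p)$ to all of $B$ by lower semi-continuity in $p$; both interpretations agree on the image of the $c$-transform, and since $\nu_0$ assigns no mass to the lower-dimensional strata of $B$, the integral $\int_B \phi^c \, d\nu_0$ is unaffected by the choice. With this point settled, all of the above manipulations of the $c$-transform are standard and the argument goes through without further obstacle.
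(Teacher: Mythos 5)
Your proof is correct and takes essentially the same route as the paper: normalize by translation invariance, use the uniform Lipschitz bound from Lemma \ref{lem:uniformcontinuityPc} and Arzel\`a--Ascoli to extract a uniformly convergent subsequence, and use the $L^\infty$-contractivity of the $c$-transform from Lemma \ref{lem:Pc} to pass to the limit in the functional. You are somewhat more careful than the paper in two respects — you explicitly verify that $\mathcal{P}_c$ is closed under uniform limits (by iterating the contractivity through the involutive identity $(\phi_k^c)^c=\phi_k$), and you flag the subtlety that $c(x,p)$ is only defined for $p$ in the interior of top-dimensional faces of $B$ — both of which the paper leaves implicit, so these are welcome additions rather than deviations.
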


		\begin{proof}
			By Lemma \ref{lem:Pc}, 
			the functional $\mathcal{F}_{\mu_0}$ depends continuously on $\phi\in\mathcal{P}_c$ with respect to the $C^0$-norm.	By Lemma \ref{lem:uniformcontinuityPc}, up to an additive normalising constant, the functions $\phi\in \mathcal{P}_c$ have uniform $L^\infty$ and Lipschitz bounds, so by Arzela-Ascoli, any minimising sequence subsequentially converges to a minimiser.
		\end{proof}

		\subsection{Relation to NA MA equation}

		Using the model line bundle $\mathcal{L}\to \mathcal{X}$, we can identity all  continuous NA semipositive metrics with their potential functions $\phi\in \text{CPSH}(X_K^{an},\mathcal{L})$.

		\begin{lem}\cite[Lemm 8.4]{Boucksom}\label{lem:domination}
			(Domination principle) Let $\phi, \phi'\in \text{CPSH}(X_K^{an},\mathcal{L})$, and suppose the NA MA measure for $\phi$ is supported on $Sk(X)\subset X_K^{an}$, and $\phi\geq \phi'$ on $Sk(X)$, then $\phi\geq \phi'$ on $X_K^{an}$. 
		\end{lem}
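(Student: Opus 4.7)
The plan is to argue by contradiction using the non-archimedean comparison principle and an envelope construction. Suppose for contradiction that $\phi(x_0) < \phi'(x_0)$ at some $x_0 \in X_K^{an}$. I would choose $\epsilon > 0$ small enough that $x_0 \in U_\epsilon := \{\phi + \epsilon < \phi'\}$. The hypothesis $\phi \geq \phi'$ on $Sk(X)$ gives $\phi + \epsilon > \phi'$ on $Sk(X)$, hence $U_\epsilon \cap Sk(X) = \emptyset$. Since $MA(\phi) = MA(\phi + \epsilon)$ is supported on $Sk(X)$, this yields $MA(\phi + \epsilon)(U_\epsilon) = 0$. Invoking then the NA comparison principle of Boucksom-Favre-Jonsson,
\[
\int_{\{u < v\}} MA(v) \leq \int_{\{u < v\}} MA(u),
\]
applied with $u = \phi + \epsilon$, $v = \phi'$, yields $MA(\phi')(U_\epsilon) = 0$.

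To extract a contradiction from the vanishing $MA(\phi')(U_\epsilon) = 0$, I would form the upper envelope $\psi := \max(\phi + \epsilon, \phi') \in \text{CPSH}(X_K^{an}, \mathcal{L})$ and exploit locality of the NA MA operator on the decomposition $X_K^{an} = \{\phi + \epsilon > \phi'\} \cup \{\phi + \epsilon = \phi'\} \cup U_\epsilon$. On the first piece (which contains $Sk(X)$), $\psi$ locally equals $\phi + \epsilon$, so $MA(\psi) = MA(\phi + \epsilon)$ there; on $U_\epsilon$, $\psi = \phi'$ locally, so $MA(\psi) = MA(\phi')$. Combining the total mass identity $\int MA(\psi) = (L^n) = \int MA(\phi + \epsilon)$, the concentration of $MA(\phi + \epsilon)$ on $Sk(X) \subset \{\phi + \epsilon > \phi'\}$, and $MA(\psi)(U_\epsilon) = 0$ forces $MA(\psi) = MA(\phi + \epsilon)$ as Radon measures supported on $Sk(X)$. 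The NA uniqueness principle (up to an additive constant) for the Monge-Amp\`ere equation then gives $\psi = \phi + \epsilon + C$; equality on $Sk(X)$ pins $C = 0$, contradicting $\psi = \phi' > \phi + \epsilon$ on the nonempty open set $U_\epsilon$.

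The main technical obstacle is invoking the correct foundational tools of NA pluripotential theory, namely the comparison principle, the locality of NA MA under $\max$-envelopes, and the uniqueness (up to constant) of CPSH solutions to the NA MA equation with a given Radon measure. All three are established in the cited works of Boucksom-Favre-Jonsson and Boucksom-Jonsson, and given these tools the Lemma reduces to the clean envelope-uniqueness argument above. One can alternatively bypass the uniqueness step and close the contradiction by iterating the comparison principle together with a capacity/orthogonality argument, which is the route taken in \cite[Lemma 8.4]{Boucksom}.
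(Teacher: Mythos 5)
The paper does not give its own proof of this lemma; it cites \cite[Lemma 8.4]{Boucksom} directly, so there is no in-paper argument to compare against, and the question is whether your reconstruction is sound on its own terms.

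Your core envelope argument is correct as a derivation from the cited tools: form $\psi := \max(\phi + \epsilon, \phi')$, use locality of the NA Monge-Amp\`ere operator on the open set $W := \{\phi + \epsilon > \phi'\} \supset Sk(X)$ to conclude $MA(\psi) = MA(\phi+\epsilon)$ (both put their full mass $(L^n)$ on $Sk(X)\subset W$), then invoke the uniqueness theorem to force $\psi = \phi + \epsilon$, contradicting $\psi = \phi' > \phi + \epsilon$ on the nonempty open set $U_\epsilon$. Two remarks. First, the comparison-principle step establishing $MA(\phi')(U_\epsilon)=0$ is actually superfluous: once you have locality of $MA$ under the $\max$-envelope on $W$, the concentration of $MA(\phi+\epsilon)$ on $Sk(X)\subset W$ plus the total mass normalization $\int MA(\psi) = (L^n)$ already force $MA(\psi) = MA(\phi + \epsilon)$ without ever needing the vanishing on $U_\epsilon$; that whole first paragraph of your argument can be dropped. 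Second and more substantively, your route invokes the uniqueness theorem (Thm.~\ref{thm:BFJ}), but in the logical development of \cite{Boucksom} the domination principle Lemma~8.4 is precisely one of the tools used \emph{to prove} uniqueness (Theorem~8.5), not a consequence of it; as a reconstruction of the BFJ proof your argument is therefore circular. You do anticipate this at the end and correctly identify the uniqueness-free comparison/orthogonality route as the one actually taken in \cite{Boucksom}. The envelope-plus-uniqueness argument you wrote remains a clean and valid proof of the implication ``uniqueness $\Rightarrow$ domination,'' which is acceptable if one supplies uniqueness from an independent source (for instance the variational/energy approach), but as written it does not constitute a self-contained foundational proof of \cite[Lemma 8.4]{Boucksom}.
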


		\begin{prop}\label{prop:maxextension}
			For any potential $\phi\in \mathcal{P}_c$ defined on $Sk(X)\subset X_K^{an}$, there exists a unique extension $\tilde{\phi}\in \text{CPSH}(X_K^{an},\mathcal{L})$ such that  $\tilde{\phi}|_{Sk(X)}=\phi$, and 
			\[
			\tilde{\phi}(x)= \max\{ v(x) :v\in \text{CPSH}(X_K^{an},\mathcal{L}), v|_{Sk(X)}\leq \phi|_{Sk(X)}\},\quad \forall x\in X_K^{an}.
			\]
		\end{prop}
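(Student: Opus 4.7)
The plan is to define $\tilde\phi$ directly by the maximum formula in the statement and then verify its two required properties: (a) $\tilde\phi\in\text{CPSH}(X_K^{an},\mathcal{L})$, and (b) $\tilde\phi|_{Sk(X)}=\phi$. Uniqueness is tautological from the maximum characterization. Writing $\mathcal{E}:=\{v\in\text{CPSH}(X_K^{an},\mathcal{L}):v|_{Sk(X)}\leq\phi\}$ and $\tilde\phi(x):=\sup\{v(x):v\in\mathcal{E}\}$, the inequality $\tilde\phi\leq\phi$ on $Sk(X)$ is immediate from the admissibility condition. The nontrivial content is the reverse inequality on $Sk(X)$ together with continuity of $\tilde\phi$ on $X_K^{an}$, both of which we obtain from a Fubini--Study approximation built from the theta basis.

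For each sufficiently large $l$, the canonical basis $\{\theta_p^l\}_{p\in B(l^{-1}\mathbb{Z})}$ defines the NA Fubini--Study potential
\[
\Phi_{FS,l}(x):=\max_{p\in B(l^{-1}\mathbb{Z})}\bigl(-\tfrac{1}{l}\,val_x(\theta_p^l)-\phi^c(p)\bigr)\in\text{CPSH}(X_K^{an},\mathcal{L}),
\]
where $\phi^c$ is extended continuously from the interiors of the top-dimensional faces of $B$ to all rational points via its equi-Lipschitz property. Setting $\epsilon_l:=\sup_{y\in Sk(X)}(\Phi_{FS,l}(y)-\phi(y))$ and $\phi_l:=\Phi_{FS,l}-\epsilon_l$, the shifted potential $\phi_l$ lies in $\mathcal{E}$, so $\tilde\phi\geq\phi_l$ on $X_K^{an}$. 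The key inequality (\ref{eqn:valsmallerthanc}) gives $-l^{-1}val_x(\theta_p^l)\geq c(x,p)$, and hence
\[
\Phi_{FS,l}(x)\;\geq\;\max_{p\in B(l^{-1}\mathbb{Z})}\bigl(c(x,p)-\phi^c(p)\bigr)\;\xrightarrow{l\to\infty}\;(\phi^c)^c(x)=\phi(x),\qquad x\in Sk(X),
\]
using density of $\bigcup_l B(l^{-1}\mathbb{Z})$ in $B$ together with the equi-Lipschitz continuity of $c(x,\cdot)$ and $\phi^c$ on each face. In particular $\epsilon_l\geq 0$, and once we establish $\epsilon_l\to 0$, the bound $\tilde\phi\geq\phi_l\to\phi$ on $Sk(X)$ yields (b).

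Showing $\epsilon_l\to 0$, equivalently uniform convergence $\Phi_{FS,l}\to\phi$ on $Sk(X)$, is the main obstacle. Using $\phi(x)\geq c(x,p)-\phi^c(p)$, the task reduces to
\[
\sup_{x\in Sk(X),\,p\in B(l^{-1}\mathbb{Z})}\bigl(-\tfrac{1}{l}val_x(\theta_p^l)-c(x,p)\bigr)\longrightarrow 0.
\]
For a fixed rational $p$ of bounded denominator, iterating Lemma \ref{lem:semigroup} shows that $-l^{-1}val_x(\theta_p^l)$ is monotone decreasing in $l$ along multiples of the base level of $p$, with pointwise limit $c(x,p)$; combined with Lemma \ref{lem:Lip} and Dini's theorem this upgrades to uniform convergence in $x$ on each face of $Sk(X)$. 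The delicate point is uniformity in $p$ across the growing rational set $B(l^{-1}\mathbb{Z})$. I would address this by a two-scale argument based on the multiplication rule of Section \ref{sect:assumptions}: for $p\in B((jk)^{-1}\mathbb{Z})$ at fine level $l=jk$, express $p$ as a rational convex combination of nearby coarse points $p_i\in B(k^{-1}\mathbb{Z})$, expand $\theta_p^{jk}$ as the corresponding iterated product of $\theta_{p_i}^k$'s modulo $t$, and transfer the coarse-level uniform estimate to the fine level at the cost of $O(1/k)$ Lipschitz errors controlled by Lemma \ref{lem:Lip}. Granting this uniform convergence, property (b) follows as indicated; continuity of $\tilde\phi$ on $X_K^{an}$ then follows from standard regularization of psh envelopes in NA pluripotential theory (as in \cite{Boucksomsemipositive}): $\tilde\phi$ agrees with its USC envelope on $Sk(X)$ by (b), and the availability of continuous CPSH approximants $\phi_l$ converging uniformly on $Sk(X)$ to the continuous function $\phi$ forces $\tilde\phi$ to lie in $\text{CPSH}(X_K^{an},\mathcal{L})$.
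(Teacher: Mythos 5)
Your strategy differs from the paper's: you build a \emph{global} Fubini--Study approximant $\Phi_{FS,l}$ from the entire theta basis $\{\theta_p^l\}_{p\in B(l^{-1}\Z)}$ and reduce the proposition to uniform convergence $\Phi_{FS,l}\to\phi$ on $Sk(X)$, whereas the paper argues \emph{locally}: for each $\epsilon>0$ and $x\in Sk(X)$, it picks a single $p$ in the \emph{interior} of a top-dimensional face of $B$ with $\phi(x)\leq c(x,p)-\phi^c(p)+\epsilon$, builds a Fubini--Study competitor from the single sequence $\theta_{p_l}^l$ with $p_l\to p$, and invokes domination. The local choice of $p$ is not cosmetic; it is precisely what lets the paper sidestep the boundary behavior of $B$.

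This is where your argument has a genuine gap. To show $\epsilon_l\to 0$ you need
\[
\sup_{x\in Sk(X),\,p\in B(l^{-1}\mathbb{Z})}\bigl(-\tfrac{1}{l}val_x(\theta_p^l)-c(x,p)\bigr)\longrightarrow 0,
\]
uniformly over all rational $p$, including those approaching the lower-dimensional skeleton of $B$. But $c(x,p)$ is only defined on the interiors of the $n$-dimensional faces of $B$, and the paper's Remark immediately after introducing $c$ explicitly warns that it is not known whether $c(x,p)$ extends continuously to all of $Sk(X)\times B$. Lemma~\ref{lem:convergencephic} correspondingly only asserts uniform convergence of $\phi_l^c(p_l)\to\phi^c(p)$ on \emph{compact subsets of the interiors} of top-dimensional faces. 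Your proposed extension of $\phi^c$ ``via its equi-Lipschitz property'' assumes Lipschitz control of $c(x,\cdot)$ and $\phi^c$ in $p$ up to the face boundaries, which the paper establishes only in the $x$ variable (Lemma~\ref{lem:Lip}); in $p$ one only has convexity on each open face. Your monotonicity-plus-Dini step and the two-scale multiplication argument give locally uniform convergence in $p$ away from face boundaries, but neither touches the boundary obstruction; you correctly flag this as delicate, and it is in fact the essential difficulty your route must resolve, while the paper's route never encounters it. A secondary issue: the paper obtains continuity of $\tilde\phi$ on $X_K^{an}$ directly from the psh envelope construction of \cite[Section 8]{Boucksomsemipositive} \emph{before} verifying $\tilde\phi|_{Sk(X)}=\phi$; your reversed order, deducing $\tilde\phi\in\text{CPSH}$ from uniform approximation on $Sk(X)$ alone, would also need an extra step (e.g.\ domination) to propagate the approximation off the skeleton.
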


		\begin{proof}
			Let $\phi\in \mathcal{P}_c$, and let $\phi^c: B\to \R$ be its $c$-transform. 
			The psh envelop construction \cite[section 8]{Boucksomsemipositive} provides some continuous potential $\tilde{\phi}\in \text{CPSH}(X_K^{an},\mathcal{L})$, such that 
			\[
			\tilde{\phi}(x)= \max\{ v(x) :v\in \text{CPSH}(X_K^{an},\mathcal{L}), v|_{Sk(X)}\leq \phi|_{Sk(X)}\}. 
			\]
			We need to show that $\tilde{\phi}(x)= \phi(x)$ for any $x\in Sk(X)$. Clearly $\tilde{\phi}(x)\leq \phi(x)$.

			Given any $\epsilon>0$ and $x\in Sk(X)$, we can find some $p$ in the interior of some top dimensional face of $B$, such that
			\[
			\phi(x) \leq c(x,p)-\phi^c(p)+ \epsilon. 
			\]
			For any sequence $p_l\in B(l^{-1}\Z)$ tending to $p$, we have $-l^{-1}val(\theta_{p_l}^l)\to c(\cdot, p)$ uniformly in $x\in Sk(X)$, since the convergence holds pointwise in $x\in Sk(X)$, and by Lemma \ref{lem:Lip} we have uniform Lipschitz estimates in $x$. We find some NA Fubini-Study potential $\phi_l$ on $(X_K^{an},L)$, such that 
			\[
			\phi_l = l^{-1} \log |\theta_p^l|- \phi^c(p_l)-\epsilon=-l^{-1}val(\theta_{p_l}^l)-\phi^c(p_l)-\epsilon \quad \text{on $Sk(X)$}.
			\]
			Using also that $\phi^c$ is continuous in the interior of the top dimensional faces of $B$, the following holds for large $l$:
			\[
			\phi_l(y) \leq c(y ,p)- \phi^c(p) \leq \phi(y),\quad \forall y\in Sk(X).
			\]
			Now $\phi_l$ is a continuous psh potential, and by domination $\tilde{\phi}(x)\geq \phi_l(x)$, whence
			\[
			\tilde{\phi}(x) \geq \lim_{l\to +\infty} \phi_l(x) = c(x,p)- \phi^c(p) -\epsilon \geq \phi(x)-2\epsilon.
			\]
			Since $x,\epsilon$ are arbitrary, this proves that $\tilde{\phi}(x)\geq \phi(x)$ for any $x\in Sk(X)$, as required.
		\end{proof}

		The next result concerns Fubini-Study norms and will be useful for computing relative volume.

		\begin{lem}\label{lem:FSdiagonal}
			Let $\phi\in \text{CPSH}(X_K^{an},\mathcal{L})$ be a continuous psh potential satisfying 
			\[
			\phi(x)= \max\{ v(x) :v\in \text{CPSH}(X_K^{an},\mathcal{L}), v|_{Sk(X)}\leq \phi|_{Sk(X)}\},\quad \forall x\in X_K^{an}.
			\]
			For $l$ large enough so that the $K$-vector space $V_l=H^0(X_K,lL)$ generates the line bundle $L^{\otimes l}\to X_K$, we define
			\begin{equation}\label{eqn:phicl}
				\phi^c_l(p)= \max_{x\in Sk(X)} (  -\frac{1}{l} val_x(\theta_p^l)- \phi(x)),\quad \forall p\in B(l^{-1}\Z).
			\end{equation}
			Then the NA norms $\norm{\cdot}_{l\phi}$ on $V_l$  (See section \ref{sect:relativevolume}) is given by 
			\[
			\norm{ \sum_{p\in B(l^{-1}\Z)} a_p \theta_p^l }_{l\phi}= \max_p |a_p| e^{l \phi^c_l(p) },\quad \forall a_p\in K.
			\]

		\end{lem}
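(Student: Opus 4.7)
The plan is to prove the two inequalities $\leq$ and $\geq$ separately, using the maximal extension property of $\phi$ for the first and the valuative independence condition for the second.

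\emph{Upper bound.} The first step is to show $\|\theta_p^l\|_{l\phi} = e^{l\phi^c_l(p)}$, i.e.\ that the sup over $X_K^{an}$ in the definition of $\|\theta_p^l\|_{l\phi}$ coincides with the sup over $Sk(X)$ that defines $\phi^c_l(p)$. To do this, I would form the NA Fubini--Study potential associated with the full theta basis,
\[
\phi_{FS,l}(x) := \frac{1}{l}\max_{q \in B(l^{-1}\Z)}\bigl(\log|\theta_q^l|(x) - l\phi^c_l(q)\bigr),
\]
which lies in $\mathrm{CPSH}(X_K^{an},\mathcal{L})$ because $\{\theta_q^l\}$ generates $L^{\otimes l}\to X_K$. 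By the very definition of $\phi^c_l(q)$, one has $\phi_{FS,l}|_{Sk(X)} \leq \phi|_{Sk(X)}$, and the maximal extension property assumed on $\phi$ (see Prop.\ \ref{prop:maxextension}) then forces $\phi_{FS,l} \leq \phi$ on all of $X_K^{an}$. Restricting the max to the single index $p$ yields $l^{-1}\log|\theta_p^l|(x) - \phi(x) \leq \phi^c_l(p)$ on all of $X_K^{an}$, hence $\|\theta_p^l\|_{l\phi}\leq e^{l\phi^c_l(p)}$; the reverse inequality is immediate from $Sk(X)\subset X_K^{an}$ and \eqref{eqn:phicl}. Then for $s = \sum_p a_p \theta_p^l$, the ultrametric inequality applied pointwise on $X_K^{an}$ gives
\[
|s|_{\mathcal{L}}(x)\,e^{-l\phi(x)} \leq \max_p |a_p|\,|\theta_p^l|_{\mathcal{L}}(x)\,e^{-l\phi(x)},
\]
and taking the supremum over $X_K^{an}$ yields $\|s\|_{l\phi} \leq \max_p |a_p|\,e^{l\phi^c_l(p)}$.

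\emph{Lower bound.} Here I would invoke the valuative independence condition Def.\ \ref{Def:valuativeindependence} directly: for every $x \in Sk(X)$,
\[
|s|(x) = \Bigl|\sum_p a_p \theta_p^l\Bigr|(x) = \max_p |a_p|\,|\theta_p^l|(x).
\]
Multiplying by $e^{-l\phi(x)}$ and taking $\sup_{x\in Sk(X)}$ (using that sup commutes with the finite max), one obtains
\[
\sup_{Sk(X)} |s|\,e^{-l\phi} = \max_p |a_p|\,\sup_{Sk(X)} |\theta_p^l|\,e^{-l\phi} = \max_p |a_p|\,e^{l\phi^c_l(p)}.
\]
Since $Sk(X) \subset X_K^{an}$, the left side is bounded above by $\|s\|_{l\phi}$, giving the reverse inequality and hence equality.

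\emph{Main obstacle.} The conceptual heart of the argument is the first step, where one needs to show that the sup of $|\theta_p^l|e^{-l\phi}$ is attained (or approached) on $Sk(X)$ rather than in the larger Berkovich space. This is not automatic --- $l^{-1}\log|\theta_p^l|$ is not itself an $L$-psh potential --- and it is precisely the role of the psh envelope characterization of $\phi$ in Prop.\ \ref{prop:maxextension}, combined with the availability of the full theta basis (to promote the single section to a genuine Fubini--Study potential), that makes the argument go through. The second step, by contrast, is a direct application of the valuative independence hypothesis and is essentially formal.
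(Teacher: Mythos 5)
Your proof is correct and follows essentially the same two-part strategy as the paper: valuative independence gives the lower bound (sup over $Sk(X)$), and the maximal-extension/domination property of $\phi$ applied to the NA Fubini--Study potential built from $\{\theta_q^l\}$ gives the upper bound, finished by the ultrametric inequality. The only differences are presentational --- you present the upper bound first, and you apply the ultrametric inequality pointwise before taking the sup rather than directly at the level of the NA norm $\norm{\cdot}_{l\phi}$, but these are logically equivalent.
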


		\begin{proof}
			Let $s=\sum a_p \theta_p^l\in V_l$, where $a_p\in K$. By definition
			\[
			\norm{s}_{l\phi}= \sup_{X_K^{an}} |s| e^{-l\phi},
			\]
			where $|s|$ is defined by the model line bundle $\mathcal{L}\to \mathcal{X}$. The valuative independence condition implies that
			\[
			|s|= \max_{p\in B(l^{-1}\Z) } |a_p| |\theta_p^l| \quad \text{on $Sk(X)$},
			\]
			so by the definition of $\phi^c_l(p)$, we have
			\[
			\norm{s}_{l\phi} \geq \max_{Sk(X) }   |s| e^{-l\phi} =\max_{Sk(X)} \max_p |a_p| |\theta_p^l| e^{-l\phi} =\max_{p\in B(l^{-1}\Z)  } |a_p| e^{ l\phi^c_l(p)}.
			\]

			We now show the reverse inequality. We define the Fubini-Study potential on $X_K^{an}$, 
			\[
			\phi_l=\max_{p\in B(l^{-1}\Z )} l^{-1} \log |\theta_p^l| - \phi^c_l(p),
			\]		
			so $\phi_l\leq \phi$ on $Sk(X)$, whence $\phi_l\leq \phi$ on $X_K^{an}$ by the domination property of $\phi$. Thus
			\[
			\norm{ \theta_p^l}_{l\phi} =  \sup_{X_K^{an}} |\theta_p^l|  e^{-l\phi}
			\leq \sup_{X_K^{an}} |\theta_p^l|  e^{-l\phi_l}\leq e^{l\phi_l^c(p)},
			\]
			so the ultrametric inequality implies
			\[
			\norm{s}_{l\phi} \leq \max_p |a_p| \norm{\theta_p^l}_{l\phi} =\max_p |a_p| e^{l\phi_l^c(p)},
			\]
			completing the proof of the reverse inequality. 
		\end{proof}

		\begin{lem}\label{lem:convergencephic}
			In the notation of the above lemma, $\sup_{p\in B} |\phi_l^c|\leq C$ for some uniform constant  for all large $l$. Moreover,  for any compact subset $K$ in the interior of the top dimensional face of $B$, we have the uniform convergence $\phi_l^c(p_l)\to \phi^c(p)$ whenever $p_l\in B(l^{-1}\Z)$ converges to $p\in K$.
		\end{lem}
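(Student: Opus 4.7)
\textbf{Proof plan for Lemma \ref{lem:convergencephic}.}
The strategy is to bound $\phi_l^c$ uniformly using the equi-Lipschitz bounds on the valuation functions from Lemma \ref{lem:Lip}, and then to bracket $\phi_l^c(p_l)$ between a lower bound coming from the subadditivity inequality (\ref{eqn:valsmallerthanc}) and an upper bound coming from an Arzela--Ascoli argument applied to the sequence $-l^{-1}val(\theta_{p_l}^l)$.

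For the uniform $L^\infty$ bound: since each $\theta_p^l$ is regular on $\mathcal{X}$ with $\min_{\Delta_{\mathcal{X}}} val(\theta_p^l)=0$, Lemma \ref{lem:Lip} gives $0 \leq l^{-1} val_x(\theta_p^l) \leq C$ uniformly in $x\in Sk(X)$ and in $p,l$. Combined with $\norm{\phi}_{L^\infty(X_K^{an})} \leq C$ (continuity on a compact space), the defining formula (\ref{eqn:phicl}) yields $|\phi_l^c(p)|\leq C$ for all $p \in B(l^{-1}\Z)$ and all sufficiently large $l$.

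For the convergence, fix a sequence $p_l \in B(l^{-1}\Z)$ with $p_l \to p \in K$, where $K$ lies in the interior of a top-dimensional face of $B$. The inequality (\ref{eqn:valsmallerthanc}) immediately gives the lower bound
\[
\phi_l^c(p_l) \geq \sup_{x \in Sk(X)} (c(x, p_l) - \phi(x)).
\]
Since $c$ is convex in $p$ on each top-dimensional face (hence Lipschitz in $p$ on compact subsets of the interior) and uniformly Lipschitz in $x$ (by passing Lemma \ref{lem:Lip} to the limit), $c$ is jointly continuous on $Sk(X)\times K$, so the right-hand side converges to $\phi^c(p)$. For the upper bound, set $f_l(x) := -l^{-1} val_x(\theta_{p_l}^l)$: by Lemma \ref{lem:Lip} these functions are equi-Lipschitz on each face of $Sk(X)$, and by the $L^\infty$ bound they are uniformly bounded; and by the construction of $c$ via the Fekete-type result \cite[Thm.~3.1]{WittNystrom} discussed just before (\ref{eqn:valsmallerthanc}), they converge pointwise to $c(\cdot, p)$. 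Arzela--Ascoli then promotes this to uniform convergence on the compact set $Sk(X)$: every subsequence admits a further uniformly convergent subsequence, whose limit must be $c(\cdot, p)$ by the pointwise identification. Taking $\max_x$ after subtracting $\phi$ gives $\phi_l^c(p_l) \to \phi^c(p)$. Uniformity over $K$ follows by a standard compactness argument: a failure of uniformity would produce a sequence $p_l \to p^* \in K$ violating the convergence just established.

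The main technical subtlety lies in the lower bound step, where one needs the joint continuity of $c$ on $Sk(X)\times K$ despite $c(x,p)$ being defined only for $p$ in the interior of top-dimensional faces of $B$; this is handled by combining the uniform Lipschitz bound in $x$ (Lemma \ref{lem:Lip} passed to the limit) with the local Lipschitz regularity of $c(x,\cdot)$ coming from convexity, so that $c$ is actually jointly Lipschitz on $Sk(X)\times K$ once $K$ stays a definite distance away from the boundary of its face.
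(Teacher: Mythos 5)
Your proof is correct and takes essentially the same route as the paper's (very brief) proof: uniform $L^\infty$ bound from Lemma~\ref{lem:Lip} plus boundedness of $\phi$, and convergence of $\phi_l^c(p_l)$ via uniform-in-$x$ convergence of $-l^{-1}val_x(\theta_{p_l}^l)\to c(x,p)$. You spell out the equicontinuity/Arzel\`a--Ascoli step that the paper leaves implicit, which is a reasonable thing to make explicit.

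One structural remark: your lower-bound step via~(\ref{eqn:valsmallerthanc}) is redundant. Once you have uniform convergence $f_l\to c(\cdot,p)$ on the compact set $Sk(X)$, the continuity of the sup functional under uniform convergence already gives $\phi_l^c(p_l)=\max_x(f_l(x)-\phi(x))\to \max_x(c(x,p)-\phi(x))=\phi^c(p)$ directly, with both inequalities at once; separating it into a subadditivity lower bound plus an Arzel\`a--Ascoli upper bound does no harm but adds no content. Also, to invoke Arzel\`a--Ascoli on all of $Sk(X)$ you should note explicitly that the functions $f_l$ are continuous on $Sk(X)$ and uniformly Lipschitz \emph{on each face} (Lemma~\ref{lem:Lip}), which by the finite face structure gives a single equicontinuity modulus on all of $Sk(X)$; this is true but worth a word. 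Finally, your observation that the Lipschitz constant of $c(x,\cdot)$ on a compact $K$ in the interior of a face is uniform in $x$ (via the standard bound on the local Lipschitz constant of a convex function in terms of its oscillation, which is uniformly bounded here) is the right way to get the joint Lipschitz regularity; this point is indeed the subtle part of the lower-bound argument, though as noted it is unnecessary once the uniform convergence is in hand.
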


		\begin{proof}
			The uniform $L^\infty$ bound $\sup_{p\in B} |\phi_l^c|\leq C$  follows from Lemma \ref{lem:Lip} and the boundedness of $\phi$. The convergence statement for $\phi_l^c(p_l)$ follows from the convergence $-l^{-1}val_x(\theta_{p_l}^l)\to c(x,p)$ which is uniform for $x\in Sk(X)$. 
		\end{proof}

		Our next goal is to compute the Monge-Amp\`ere energy via the relative volume interpretation in Section \ref{sect:relativevolume} \`a l\`a Boucksom-Eriksson \cite{Boucksomnew1}. This is inspired by the Chebyshev transform of Witt-Nystr\"om \cite{WittNystrom}.

		\begin{prop}\label{prop:MAenergy}
			Let $\phi\in \text{CPSH}(X_K^{an},\mathcal{L})$ be a continuous psh potential satisfying
			\[
			\phi(x)= \max\{ v(x) :v\in \text{CPSH}(X_K^{an},\mathcal{L}), v|_{Sk(X)}\leq \phi|_{Sk(X)}\},\quad \forall x\in X_K^{an}.
			\]
			Then up to an additive normalisation constant, the Monge-Amp\`ere energy is
			\[
			E(\phi)= -(L^n) \int_B  \phi^c(p) d\nu_0.
			\]
			Consequently, the functional $F_{\mu_0}$ from (\ref{eqn:Fmu01}) is equal to $\mathcal{F}_{\mu_0}$ from (\ref{eqn:functionalFmu0}). 
		\end{prop}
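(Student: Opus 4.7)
The plan is to apply the Boucksom--Eriksson relative volume formula recalled in Section \ref{sect:relativevolume}, exploiting that the valuative independence assumption makes the NA norms $\norm{\cdot}_{l\phi}$ on $V_l = H^0(X_K, lL)$ simultaneously diagonal in the canonical basis $\{\theta_p^l\}_{p \in B(l^{-1}\Z)}$, and then recognising the resulting limit as a Riemann integral on $B$ against $\nu_0$.

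Concretely, I fix a second potential $\psi \in \text{CPSH}(X_K^{an}, \mathcal{L})$ satisfying the same maximum-envelope hypothesis as $\phi$ (for example, the maximal extension produced by Proposition \ref{prop:maxextension} applied to any fixed reference element of $\mathcal{P}_c$), so that both $\norm{\theta_p^l}_{l\phi} = e^{l\phi_l^c(p)}$ and $\norm{\theta_p^l}_{l\psi} = e^{l\psi_l^c(p)}$ hold by Lemma \ref{lem:FSdiagonal}. Since the two norms are simultaneously diagonal in $\{\theta_p^l\}$, the relative volume simplifies to
\[
vol(\norm{\cdot}_{l\phi}, \norm{\cdot}_{l\psi}) = l \sum_{p \in B(l^{-1}\Z)} (\psi_l^c(p) - \phi_l^c(p)),
\]
and feeding this into the Boucksom--Eriksson formula yields
\[
E(\phi) - E(\psi) = \lim_{l \to +\infty} \frac{n!}{l^n} \sum_{p \in B(l^{-1}\Z)} (\psi_l^c(p) - \phi_l^c(p)).
\]

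Next I convert the sum into an integral. Asymptotic Riemann--Roch gives $|B(l^{-1}\Z)| = N(l) \sim \frac{(L^n)}{n!} l^n$, so the Lebesgue volume of $B$ (with respect to its integral structure) equals $(L^n)/n!$, and the normalised probability measure is $\nu_0 = \frac{n!}{(L^n)} d(\text{Leb})$. Hence the expected limit of the right-hand side is $(L^n) \int_B (\psi^c - \phi^c) d\nu_0$. Once this is established, rearranging gives
\[
E(\phi) = -(L^n) \int_B \phi^c d\nu_0 + \text{const}(\psi),
\]
and the normalisation constant can be absorbed. Substituting into (\ref{eqn:Fmu01}) and using that $\mu_0$ is supported on $Sk(X)$ yields the identity $F_{\mu_0}(\phi) = \mathcal{F}_{\mu_0}(\phi)$.

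The main obstacle is justifying the Riemann sum convergence, since Lemma \ref{lem:convergencephic} only provides uniform convergence of $\phi_l^c$ (and $\psi_l^c$) on compact subsets of the interiors of the top-dimensional faces of $B$, not globally. To control the boundary and lower-dimensional strata, I would combine the global $L^\infty$ bound $\sup_B |\phi_l^c| \leq C$ with the standard lattice-point estimate that, for any $\epsilon > 0$, the number of $p \in B(l^{-1}\Z)$ lying in an $\epsilon$-neighbourhood $B_\epsilon$ of the $(n-1)$-skeleton of $B$ satisfies $\frac{1}{l^n}|B_\epsilon \cap B(l^{-1}\Z)| \to \text{vol}(B_\epsilon) \to 0$ as $\epsilon \to 0$. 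Splitting the sum into contributions from $B_\epsilon$ (bounded by $C \cdot \text{vol}(B_\epsilon)$) and its complement (where uniform convergence applies), a diagonal argument in $\epsilon$ and $l$ then produces the desired limit $(L^n)\int_B (\psi^c - \phi^c) d\nu_0$, completing the proof.
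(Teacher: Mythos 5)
Your proof follows essentially the same route as the paper: pick a second maximally-extended potential $\psi$, diagonalize the norms $\norm{\cdot}_{l\phi}$, $\norm{\cdot}_{l\psi}$ simultaneously in the basis $\{\theta_p^l\}$ via Lemma \ref{lem:FSdiagonal}, express the relative volume as a sum of $c$-transforms, identify the normalising constant via asymptotic Riemann--Roch, and pass to the limit as a Riemann integral. The only point where you go further is in explicitly handling the Riemann sum convergence near the $(n-1)$-skeleton of $B$ by splitting off an $\epsilon$-neighbourhood and combining the global $L^\infty$ bound with a lattice-point count; the paper's one-line invocation of Lemma \ref{lem:convergencephic} leaves exactly this step implicit, so your treatment is a legitimate and welcome filling-in of detail.
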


		\begin{proof}
			Take any other potential $\psi\in \text{CPSH}(X_K^{an},\mathcal{L})$ with the same domination property. Then by Lemma \ref{lem:FSdiagonal}, the NA norms $\norm{\cdot}_{l\phi}$ versus $\norm{\cdot}_{l\psi}$ have a simultaneous orthogonal basis $\theta_p^l$, namely for any $s=\sum_{p\in B(l^{-1}\Z)} a_p \theta_p^l $, and $a_p\in K$, we have 
			\[
			\norm{s }_{l\phi}= \max_p |a_p| e^{l \phi^c_l(p) },\quad 
			\norm{s }_{l\psi}= \max_p |a_p| e^{l \psi^c_l(p) }.
			\]
			Thus  the relative volume between the two NA norms  is (see Section \ref{sect:relativevolume})
			\begin{equation}\label{eqn:relativevolume1}
				vol(  \norm{\cdot}_{l\phi}, \norm{\cdot}_{l\psi}  )= \log \prod_{  p\in B(l^{-1}\Z) } e^{-l \phi^c_l(p)+l \psi^c_l(p)} = l\sum_{  p\in B(l^{-1}\Z) }  ( - \phi^c_l(p)+\psi^c_l(p) ).
			\end{equation}

			Let $\tilde{\nu}_0$ be the Lebesgue measure on $B$ induced from its integral structure. Then the total number of points
			\[
			|B(l^{-1}\Z)| = l^n \int_B d\tilde{\nu}_0 +o(l^n),\quad l\to +\infty.
			\]
			But $B(l^{-1}\Z)$ are bijective to the basis $\theta_p^l$ of $H^0(X_t,lL)$ for small $t$, so by Riemann-Roch and Serre vanishing theorem,
			\[
			|B(l^{-1}\Z)|= h^0(X_t, lL )=  \frac{   l^n  ( L^n)}{n!} +O(l^{n-1}).
			\]
			Compairing the two asymptotes, the total measure is
			\[
			\int_B d\tilde{\nu}_0=  \frac{   ( L^n)}{n!} .
			\]
			By definition the normalised measure $\nu_0=  \frac{n!}{(L^n)} \tilde{\nu}_0$, so its total measure is one.

			By Lemma \ref{lem:convergencephic}, the Riemann sum converges as $l\to +\infty$,
			\[
			l^{-n} \sum_{  p\in B(l^{-1}\Z) }  \phi^c_l(p)\to \int_B  \phi^c(p)   d\tilde{\nu}_0 = \frac{   ( L^n)}{n!}  \int_B  \phi^c(p)   d\nu_0 .
			\]
			By Section \ref{sect:relativevolume} and (\ref{eqn:relativevolume1}), 
			\[
			\begin{split}
				E(\phi)-E(\psi)=& \lim_{l\to+\infty} \frac{n!}{  l^{n+1}  } 	vol(  \norm{\cdot}_{l\phi}, \norm{\cdot}_{l\psi}  )
				\\
				= & n!  \lim_{l\to+\infty} l^{-n} \sum_{  p\in B(l^{-1}\Z) }  (-\phi^c_l(p)+ \psi^c_l(p))
				\\
				=&   ( L^n) \int_B  (-\phi^c(p)+\psi^c(p))   d\nu_0 .
			\end{split}
			\]
			Thus up to  an additive constant, the Monge-Amp\`ere energy admits the formula
			\[
			E(\phi)=- ( L^n)\int_B  \phi^c(p) d\nu_0.
			\]
			Consequently,
			\[
			\begin{split}
				F_{\mu_0}(\phi)= & - \frac{1}{(L^n)} E(\phi) + \int_{Sk(X)} \phi d\mu_0
				\\
				=&  \int_B  \phi^c(p)   d\nu_0+  \int_{Sk(X)} \phi d\mu_0
				\\
				=& \mathcal{F}_{\mu_0}(\phi)
			\end{split}
			\]
			as required.
		\end{proof}

		We finally arrive at the main theorem characterising the NA CY potential in term of the Kontorovich dual formulation of optimal transport problem.

		\begin{thm}\label{thm:optimaltransportinterpretation}
			Let $\phi_0$ be the potential for the NA CY metric $\norm{\cdot}_{CY,0}$. Then its restriction $\phi_0|_{Sk(X)}\in \mathcal{P}_c$, and $\phi_0|_{Sk(X)}$ is the unique minimiser of the functional $\mathcal{F}_{\mu_0}$ up to an additive constant.

		\end{thm}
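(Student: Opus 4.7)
My strategy is to use Proposition \ref{prop:MAenergy} as a dictionary between the NA Calabi--Yau variational problem on $\text{CPSH}(X_K^{an},\mathcal{L})$ and the optimal transport variational problem on $\mathcal{P}_c$, and then deduce everything from the minimality and uniqueness statements in Theorem \ref{thm:BFJ}. The first observation is that $\phi_0$ itself has the maximal extension property needed in Proposition \ref{prop:MAenergy}: since $\mathrm{MA}(\phi_0) = (L^n)\mu_0$ is supported on $Sk(X)$, the domination principle (Lemma \ref{lem:domination}) implies that for any $v \in \text{CPSH}(X_K^{an},\mathcal{L})$ with $v \leq \phi_0$ on $Sk(X)$ one has $v \leq \phi_0$ on $X_K^{an}$, so Proposition \ref{prop:MAenergy} applies to $\phi_0$ and gives $E(\phi_0) = -(L^n)\int_B (\phi_0|_{Sk(X)})^c\, d\nu_0$ up to an additive normalisation.

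The core step is to show $\phi_0|_{Sk(X)} \in \mathcal{P}_c$. Set $\chi := ((\phi_0|_{Sk(X)})^c)^c \in \mathcal{P}_c$; by Lemma \ref{lem:Pc} one has $\chi \leq \phi_0|_{Sk(X)}$ and $\chi^c = (\phi_0|_{Sk(X)})^c$. Let $\widetilde{\chi} \in \text{CPSH}(X_K^{an},\mathcal{L})$ be the maximal psh extension of $\chi$ provided by Proposition \ref{prop:maxextension}. Applying Proposition \ref{prop:MAenergy} to both $\phi_0$ and $\widetilde{\chi}$, and using $\chi^c = (\phi_0|_{Sk(X)})^c$ so that $E(\phi_0) = E(\widetilde{\chi})$ (with a consistent choice of additive constant), I would compute
\[
F_{\mu_0}(\phi_0) - F_{\mu_0}(\widetilde{\chi}) \;=\; \int_{Sk(X)}(\phi_0|_{Sk(X)} - \chi)\, d\mu_0 \;\geq\; 0.
\]
The minimality of $\phi_0$ reverses this inequality, forcing $\phi_0|_{Sk(X)} = \chi$ $\mu_0$-almost everywhere. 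Since $\mu_0$ is the normalised Lebesgue measure on the top dimensional faces of $Sk(X)$ and both $\phi_0|_{Sk(X)}$ and $\chi$ are continuous on $Sk(X)$ (the latter by Lemma \ref{lem:uniformcontinuityPc}), the equality extends to all of $Sk(X)$ by continuity. Thus $\phi_0|_{Sk(X)} = \chi \in \mathcal{P}_c$, and as a byproduct $F_{\mu_0}(\phi_0) = \mathcal{F}_{\mu_0}(\phi_0|_{Sk(X)})$ up to an additive constant.

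Minimisation now follows quickly: for any $\phi \in \mathcal{P}_c$, Proposition \ref{prop:MAenergy} applied to its maximal extension $\widetilde{\phi}$ gives $F_{\mu_0}(\widetilde{\phi}) = \mathcal{F}_{\mu_0}(\phi)$, and combining this with the identity $F_{\mu_0}(\phi_0) = \mathcal{F}_{\mu_0}(\phi_0|_{Sk(X)})$ and the minimality of $\phi_0$ on $\text{CPSH}(X_K^{an},\mathcal{L})$ I obtain $\mathcal{F}_{\mu_0}(\phi_0|_{Sk(X)}) \leq \mathcal{F}_{\mu_0}(\phi)$. Uniqueness is handled in the same spirit: if $\phi \in \mathcal{P}_c$ also minimises $\mathcal{F}_{\mu_0}$, then $\widetilde{\phi}$ minimises $F_{\mu_0}$, so by the uniqueness clause of Theorem \ref{thm:BFJ} one has $\widetilde{\phi} = \phi_0 + c$ on $X_K^{an}$, and restricting to $Sk(X)$ yields $\phi = \phi_0|_{Sk(X)} + c$.

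The main obstacle I anticipate is the core step: upgrading the $\mu_0$-almost everywhere identity $\phi_0|_{Sk(X)} = \chi$ to a pointwise identity on $Sk(X)$, and ensuring that the additive constants in Proposition \ref{prop:MAenergy} really cancel uniformly between $\phi_0$ and $\widetilde{\chi}$. This relies on the large complex structure assumption $\dim Sk(X) = n$, which guarantees that $\mu_0$ has full mass on every top dimensional face, and on the uniform Lipschitz continuity of the $c$-transform (Lemma \ref{lem:uniformcontinuityPc}), so that an almost everywhere equality of continuous functions is a genuine identity. The technical subtlety flagged in Section \ref{sect:optimaltransport} that $c(x,p)$ is a priori only defined on $Sk(X) \times \{\text{interior of top-dim faces of }B\}$ does not create difficulty here, since the $\nu_0$-integrals naturally live on exactly this locus.
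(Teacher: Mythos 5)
Your proof is correct and follows essentially the same approach as the paper: apply Proposition \ref{prop:MAenergy} together with the maximal extension from Proposition \ref{prop:maxextension} to identify $F_{\mu_0}$ with $\mathcal{F}_{\mu_0}$ on potentials satisfying the domination property, and then invoke the minimality and uniqueness of $\phi_0$ from Theorem \ref{thm:BFJ}. The only minor variation is that, to establish $\phi_0|_{Sk(X)}\in\mathcal{P}_c$, you argue via $\int_{Sk(X)}(\phi_0-\chi)\,d\mu_0=0$ (using $\phi_0\geq\chi$) together with continuity, whereas the paper invokes the uniqueness clause of Theorem \ref{thm:BFJ} directly at that step, concluding $\phi_0=\psi+\text{const}$ — both routes are equally valid and rest on the same ingredients.
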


		\begin{proof}
			Since the NA MA measure for the NA CY metric is supported on $Sk(X)$, the domination property applies to the NA CY potential $\phi_0$ by Lemma \ref{lem:domination}. In particular $\phi_0$ is uniquely determined by its restriction to $Sk(X)$, and we shall abuse the notation $\phi_0$ to also denote its restriction to $Sk(X)$.

			First we claim $\phi_0\in \mathcal{P}_c$. We let $(\phi_0^c)^c \in \mathcal{P}_c$ be the double $c$-transform.  By Prop. \ref{prop:maxextension}, we can find the unique extension of $(\phi^c)^c$ to $\psi\in \text{CPSH}(X_K^{an},\mathcal{L})$ with the domination property.   Applying Prop. \ref{prop:MAenergy} to both $\phi_0$  and $\psi$, we deduce that 
			\[
			F_{\mu_0}( \phi_0) = \mathcal{F}_{\mu_0}(\phi_0) , \quad F_{\mu_0}( \psi) = \mathcal{F}_{\mu_0}(\psi). 
			\]
			Since $\phi_0^c=\psi^c$ but $\phi_0\geq \psi$ by Lemma \ref{lem:Pc}, we deduce 
			$F_{\mu_0}(\phi_0) \geq F_{\mu_0} (\psi)$. But the NA CY potential is characterised as the unique minimiser of $F_{\mu_0}$ up to an additive constant, so in fact $\phi_0= \psi+\text{const}$, whence $\phi_0\in \mathcal{P}_c$.

			Next we show $\phi_0$ is the unique minimiser of $\mathcal{F}_{\mu_0}$ up to additive constant. For this we only need to replace $\psi$ in the above argument by any $\phi\in \mathcal{P}_c$, and use its unique maximal extension $\phi\in \text{CPSH}(X_K^{an},\mathcal{L})$ as the competitor for the functional $F_{\mu_0}$. Again we have
			\[
			F_{\mu_0}( \phi_0) = \mathcal{F}_{\mu_0}(\phi_0) , \quad F_{\mu_0}( \phi) = \mathcal{F}_{\mu_0}(\phi). 
			\]
			The claim follows from the fact that $\phi_0$ is the unique minimiser for $F_{\mu_0}$ up to constant. 
		\end{proof}

		\subsection{Variant: Intermediate complex structure limit}

		We now explain how a variant of the above techniques work in the Example \ref{eg:intermediate} of intermediate complex structure limits.

		We start by revisiting the basis $\theta_i^l$ from Example \ref{eg:intermediate}. 
		The essential skeleton is identified with the simplex
		\[
		Sk(X)= \{     (x_0,\ldots x_m)\in \R_{\geq 0}^{m+1}:  \sum_0^m x_i=1     \}.
		\]
		We let
		\begin{equation}
			B=\cup_{k=0}^m \Delta_k^\vee,\quad \Delta_k^\vee= \{    (p_0,\ldots p_m)\in \R^{m+1}_{\leq 0}: p_k=0, \sum d_i p_i \geq -1        \}.
		\end{equation}
		For $l=1,2,\ldots$, the rational points in $B(l^{-1}\Z)$ are of the form $(-\frac{l_0}{l},\ldots - \frac{l_m}{l})$, where $l_i$ are integers satisfying (\ref{eqn:liexponents}).
		The basis $\{  \theta_i^l\}$ consists of sections
		\[
		F_0^{l_0}\ldots F_m^{l_m} \otimes \tau_a^{l-\sum d_il_i},
		\]
		where $\tau_a$ comes from a basis of $V_{l-\sum d_il_i}\subset H^0(M, (l-\sum l_id_i)L)$, with $\dim V_k= \dim H^0(E_J, kL)$. Thus each point $p\in B(l^{-1}\Z)$ corresponds to $\dim V_{l-\sum d_il_i}$ different basis sections. This \emph{multiplicity} is the key difference from the large complex structure limit case, and it arises because the growth rate of $h^0(X_t, lL)\sim  \frac{(L^n)}{n!}l^n$ is bigger than $|B(l^{-1}\Z)|=O(l^m)$ where $m=\dim B$.

		The valuation function restricted to $Sk(X)$ is simply 
		\[
		val_x(   	F_0^{l_0}\ldots F_m^{l_m} \otimes \tau_a^{l-\sum d_il_i}  )= val_x(   	F_0^{l_0}\ldots F_m^{l_m}  )= \sum_0^m l_i x_i,
		\]
		so using the index identification $(p_0,\ldots p_m)= (- \frac{l_0}{l},\ldots  - \frac{l_m}{l})$,
		\[
		- l^{-1}val_x(  F_0^{l_0}\ldots F_m^{l_m} \otimes \tau_a^{l-\sum d_il_i}     )=  -l^{-1}\sum_0^m l_i x_i =\sum_0^m x_i p_i= \langle x,p\rangle.
		\]
		By analogy, we define the \emph{cost function} as the $l\to +\infty$ limit of these valuation functions. In this simple case the sequence $\langle x,p\rangle$ is independent of $l$, so we set $c(x,p)= \langle x,p\rangle$. We can then define the $c$-transform as in (\ref{eqn:ctransform}), and define the function class $\mathcal{P}_c$ as before.

		The analogue of Prop. \ref{prop:MAenergy} takes the following form:

		\begin{prop}\label{prop:MAenergyintermediate}
			Let $\phi\in \text{CPSH}(X_K^{an},\mathcal{L})$ satisfy
			\[
			\phi(x)= \max\{ v(x) :v\in \text{CPSH}(X_K^{an},\mathcal{L}), v|_{Sk(X)}\leq \phi|_{Sk(X)}\},\quad \forall x\in X_K^{an}.
			\]
			Then up to an additive normalisation constant, the Monge-Amp\`ere energy is
			\[
			E(\phi)= -(L^n) \int_B  W(p)\phi^c(p) d\nu_0,\quad W(p)= (1+ \sum_0^m d_i p_i)^{n-m},
			\]
			and $d\nu_0$ is proportional to the Lebesgue measure on $B$ induced from its integral structure, with the normalisation determined by $\int_B W(p) d\nu_0=1$.
			Consequently, the functional $F_{\mu_0}$ from (\ref{eqn:Fmu01}) is equal to the functional
			\begin{equation}
				\mathcal{F}_{\mu_0}(\phi):=  \int_B  W(p)\phi^c(p) d\nu_0 + \int_{Sk(X)} \phi d\mu_0.
			\end{equation}
		\end{prop}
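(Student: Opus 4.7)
The strategy is to adapt the proof of Proposition \ref{prop:MAenergy} to the intermediate setting. The key new feature is the presence of a multiplicity $\dim V_{l - \sum d_i l_i}$ of basis sections attached to each rational point $p = (-l_0/l, \ldots, -l_m/l) \in B(l^{-1}\Z)$; this multiplicity is what will produce the weight $W(p)$ after an asymptotic Riemann sum. Since $\dim B = m < n$ in the intermediate case, a single power of $l$ is not enough to reach the right scaling, and the shortfall $l^{n-m}$ must come from the growth of $\dim V_{l-\sum d_i l_i}$.

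The first step is to establish a weighted analogue of Lemma \ref{lem:FSdiagonal}. For $\phi\in \text{CPSH}(X_K^{an},\mathcal{L})$ with the maximal-extension property, set
\[
\phi_l^c(p) := \max_{x\in Sk(X)}\bigl(\langle x,p\rangle - \phi(x)\bigr), \quad p\in B(l^{-1}\Z),
\]
noting that $-l^{-1}val_x(\theta_{p,a}^l) = \langle x,p\rangle$ is independent of the multiplicity label $a$. The valuative independence verified in Example \ref{eg:intermediate}---which, via Lemma \ref{lem:valuativeindependence2}, rests on the $\C$-linear independence of the $\tau_a$ restricted to $E_J$ within each equivalence class of exponents---yields the ultrametric identity
\[
\norm{\sum_{p,a} c_{p,a}\theta_{p,a}^l}_{l\phi} = \max_{p,a} |c_{p,a}|\, e^{l\phi_l^c(p)}
\]
by the same Fubini--Study-envelope-plus-domination argument as in Lemma \ref{lem:FSdiagonal}. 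Thus $\{\theta_{p,a}^l\}$ is a simultaneous orthogonal basis for $\norm{\cdot}_{l\phi}$ and $\norm{\cdot}_{l\psi}$ for any two such potentials.

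Next I compute the relative volume as in Section \ref{sect:relativevolume},
\[
vol(\norm{\cdot}_{l\phi}, \norm{\cdot}_{l\psi}) = l\sum_{p\in B(l^{-1}\Z)} \dim V_{l - \sum d_i l_i}\bigl(\psi_l^c(p)-\phi_l^c(p)\bigr),
\]
the multiplicity entering because the determinant runs over all basis sections. By Riemann--Roch on the $(n-m)$-dimensional smooth variety $E_J$, $\dim V_k = \frac{(L|_{E_J})^{n-m}}{(n-m)!}k^{n-m} + O(k^{n-m-1})$; substituting $k = l(1 + \sum d_i p_i)$ gives $\dim V_{l-\sum d_i l_i} = C_0\, l^{n-m} W(p) + O(l^{n-m-1})$ with $C_0 = \frac{(L|_{E_J})^{n-m}}{(n-m)!}$. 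Dividing by $l^{n+1}$, and invoking uniform $L^\infty$ and Lipschitz control on $\phi_l^c, \psi_l^c$ analogous to Lemma \ref{lem:convergencephic} (here trivially, since $c(x,p)=\langle x,p\rangle$ is already continuous on all of $Sk(X)\times B$), the expression converges to
\[
\frac{n!}{l^{n+1}}vol \;\longrightarrow\; n!\,C_0 \int_B W(p)\bigl(\psi^c(p)-\phi^c(p)\bigr)\,d\tilde{\nu}_0,
\]
where $\tilde{\nu}_0$ is the Lebesgue measure on $B$ coming from its integral structure. The normalisation is then pinned down by summing multiplicities: the Hilbert--Poincar\'e series computation of Example \ref{eg:intermediate} gives $\sum_p \dim V_{l-\sum d_i l_i} = h^0(X_t, lL) = \frac{(L^n)}{n!}l^n + O(l^{n-1})$, whence $C_0\int_B W\,d\tilde{\nu}_0 = (L^n)/n!$; defining $\nu_0 := \frac{n! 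C_0}{(L^n)}\tilde{\nu}_0$ gives $\int_B W\,d\nu_0 = 1$ and $\frac{n!}{l^{n+1}}vol \to (L^n)\int_B W(\psi^c-\phi^c)\,d\nu_0$. Up to an additive constant, $E(\phi) = -(L^n)\int_B W(p)\phi^c(p)\,d\nu_0$, and the equality $F_{\mu_0} = \mathcal{F}_{\mu_0}$ follows by substitution into (\ref{eqn:Fmu01}).

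The main obstacle is the first step: confirming the simultaneous orthogonality when each $p$ carries multiplicity. The Fourier-mode argument of Lemma \ref{lem:asymptoticorthogonality1} transfers to the $K$-analytic setting, but one must track that within each equivalence class of leading exponents, the $\C$-linear independence of the $\tau_a|_{E_J}$ prevents any cancellation---precisely the condition already checked in Example \ref{eg:intermediate}. Once this is in place, the remaining steps are a weighted Riemann-sum calculation and a normalisation bookkeeping that parallels Proposition \ref{prop:MAenergy}.
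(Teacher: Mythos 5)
Your proposal is correct and follows essentially the same route as the paper: identify the simultaneous orthogonal basis via valuative independence (the extension of Lemma~\ref{lem:FSdiagonal} to the multiplicity-labelled basis $\theta^l_{p,a}$, using that $-l^{-1}val_x(\theta^l_{p,a}) = \langle x,p\rangle$ is independent of $a$), compute the relative volume with the multiplicity factor $\dim V_{l-\sum d_il_i}$, expand via Riemann--Roch on $E_J$ to extract the weight $W(p)l^{n-m}$, pass to the Riemann-sum limit, and normalise. The only real divergence is in the normalisation step: the paper pins down the overall coefficient by the shortcut $E(\phi+1)-E(\phi) = (L^n)$, whereas you instead sum the multiplicities $\sum_p \dim V_{l-\sum d_il_i} = h^0(X_t,lL)$ (the Hilbert--Poincar\'e series identity already established in Example~\ref{eg:intermediate}) and match leading-order asymptotics. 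These give identical constants; your route is arguably more self-contained but relies on the dimension-count lemma, while the paper's is a one-line identity. One small imprecision in your final paragraph: the simultaneous orthogonality in the $K$-analytic setting is proved by the ultrametric-plus-domination argument of Lemma~\ref{lem:FSdiagonal}, not by the $L^2$ Fourier-mode estimate of Lemma~\ref{lem:asymptoticorthogonality1} (that is a complex-geometric statement on $X_t$); the condition you correctly identify--$\C$-linear independence of the $\tau_a|_{E_J}$ within each exponent class--feeds into Lemma~\ref{lem:valuativeindependence2}, not into a Fourier argument. This does not affect the validity of the proof.
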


		\begin{proof}
			The key difference from Prop. \ref{prop:MAenergy} is that each $p\in B(l^{-1}\Z)$ corresponds to $h^0(E_J, (l-\sum d_il_i)L)$ basis sections $\theta_i^l$; this is where the weight factor $W(p)$ comes from.
			The analogue of (\ref{eqn:phicl}) is
			\[
			\max_{Sk(X)} (-l^{-1} val_x(    	F_0^{l_0}\ldots F_m^{l_m} \otimes \tau_a^{l-\sum d_il_i}  )-\phi(x))= \max_{Sk(X)} \langle x,p\rangle -\phi(x) = \phi^c(p).
			\]
			Notably this depends only on $p$, not on the basis section or $l$.

			Take any other potential $\psi\in \text{CPSH}(X_K^{an},\mathcal{L})$ with the same domination property. Then by Lemma \ref{lem:FSdiagonal}, the NA norms $\norm{\cdot}_{l\phi}$ versus $\norm{\cdot}_{l\psi}$ have a simultaneous orthogonal basis $\theta_i^l$, namely for any $s=\sum_1^{N(l)} a_i \theta_i^l $, and $a_i\in K$, we have 
			\[
			\norm{s }_{l\phi}=  \max_i |a_i| e^{l \phi^c(p) }  ,\quad 
			\norm{s }_{l\psi}= \max_i |a_i| e^{l \psi^c(p) }.
			\]
			Thus  the relative volume between the two NA norms  is (see Section \ref{sect:relativevolume})
			\begin{equation}\label{eqn:relativevolume2}
				\begin{split}
					& vol(  \norm{\cdot}_{l\phi}, \norm{\cdot}_{l\psi}  )= \log \prod_{ 1}^{ N(l) } e^{-l \phi^c(p)+ l \psi^c(p)} 
					\\
					= &l\sum_{  p\in B(l^{-1}\Z) } h^0(E_J, (l-\sum l_i d_i)L) ( - \phi^c(p)+\psi^c(p) ),
				\end{split}
			\end{equation}
			by computing in the simultaneous orthogonal basis $\{ \theta_i^l\}$.

			By Riemann-Roch and Serre vanishing,
			\[
			h^0(E_J, kL)=  \frac{  k^{n-m} (L^{n-m}|_ {E_J})   }   {  (n-m)! } +O(k^{n-m-1}).
			\]
			Thus
			\[
			\begin{split}
				h^0(   E_J, (l-\sum l_i d_i)L     )= & \frac{ (L^{n-m}|_{E_J}) (l-\sum d_il_i)^{n-m}    }   {  (n-m)! } +O(l^{n-m-1})
				\\
				=&   \frac{  (L^{n-m}|_{E_J}) }{ (n-m)!}  W(p)  l^{n-m}+ O(l^{n-m-1}).
			\end{split}
			\]
			Hence by (\ref{eqn:relativevolume2}),
			\[
			l^{-(n+1)}	vol(   \norm{\cdot}_{l\phi}, \norm{\cdot}_{l\psi}   )= \frac{
				(L^{n-m}|_{E_J} )}{ (n-m)!  }     l^{-m} (\sum_{  p\in B(l^{-1}\Z) } W(p) ( - \phi^c(p)+\psi^c(p) ) +O(l^{-1})).
			\]

			Let $\tilde{\nu}_0$ be the Lebesgue measure on $B$ induced from the integral structure. Then by Section \ref{sect:relativevolume} and the convergence of the above Riemann sums,
			\[
			\begin{split}
				E(\phi)-E(\psi)=&  \lim_{l\to +\infty} \frac{n! }{l^{n+1}} vol(   \norm{\cdot}_{l\phi}, \norm{\cdot}_{l\psi}   )
				\\
				=  &  \frac{n!
					(L^{n-m}|_{E_J} )}{ (n-m)!  } 
				\int_B W(p) ( - \phi^c(p)+\psi^c(p) )d\tilde{\nu}_0.
			\end{split}
			\]
			Up to the choice of an additive constant,
			\begin{equation}
				E(\phi)=  -\frac{n!  (L^{n-m}|_{E_J}) }{ (n-m)!}   \int_B W(p)   \phi^c(p)d\tilde{\nu}_0.
			\end{equation}
			It remains to determine the coefficient. A shortcut is to use the formula
			\[
			E(\phi+1)-E(\phi)=(L^n),
			\] 
			to see
			\[
			\frac{n!  (L^{n-m}|_{E_J}) }{ (n-m)!}   \int_B W(p)   d\tilde{\nu}_0=(L^n). 
			\]
			By definition $\nu_0= \frac{\tilde{\nu}_0}   {    \int_B W(p)d\tilde{\nu}_0 }$, so 
			\[
			E(\phi)=  - (L^n)  \int_B W(p)   \phi^c(p)d\nu_0.
			\]
			as required.
		\end{proof}

		Theorem \ref{thm:optimaltransportinterpretation} holds verbatim.

		\begin{thm}\label{thm:optimaltransportinterpretation2}
			Let $\phi_0$ be the potential for the NA CY metric $\norm{\cdot}_{CY,0}$. Then its restriction $\phi_0|_{Sk(X)}\in \mathcal{P}_c$, and $\phi_0|_{Sk(X)}$ is the unique minimiser of the functional $\mathcal{F}_{\mu_0}: \mathcal{P}_c\to \R$ up to an additive constant.

		\end{thm}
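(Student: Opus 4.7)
The plan is to run the argument of Theorem \ref{thm:optimaltransportinterpretation} essentially verbatim, with Proposition \ref{prop:MAenergyintermediate} in place of Proposition \ref{prop:MAenergy} and with the weighted measure $W(p)d\nu_0$. As a preliminary, since the NA MA measure of $\norm{\cdot}_{CY,0}$ is supported on $Sk(X)$, the domination principle (Lemma \ref{lem:domination}) implies $\phi_0$ is the unique continuous semipositive extension of $\phi_0|_{Sk(X)}$, and in particular coincides with its maximal extension in the sense of Proposition \ref{prop:maxextension}. Thus the hypotheses of Proposition \ref{prop:MAenergyintermediate} apply to $\phi_0$, yielding $F_{\mu_0}(\phi_0)=\mathcal{F}_{\mu_0}(\phi_0|_{Sk(X)})$.

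To show $\phi_0|_{Sk(X)}\in\mathcal{P}_c$, I would form $\varphi:=(\phi_0|_{Sk(X)}^c)^c\in\mathcal{P}_c$, which by Lemma \ref{lem:Pc} satisfies $\varphi\leq\phi_0|_{Sk(X)}$ on $Sk(X)$ and $\varphi^c=\phi_0|_{Sk(X)}^c$ on $B$. Extending $\varphi$ to $\psi\in\text{CPSH}(X_K^{an},\mathcal{L})$ via Proposition \ref{prop:maxextension} and applying Proposition \ref{prop:MAenergyintermediate} gives $F_{\mu_0}(\psi)=\mathcal{F}_{\mu_0}(\varphi)\leq\mathcal{F}_{\mu_0}(\phi_0|_{Sk(X)})=F_{\mu_0}(\phi_0)$, where the inequality uses that the $\int_B W\phi^c\,d\nu_0$ terms coincide while $\int_{Sk(X)}\varphi\,d\mu_0\leq\int_{Sk(X)}\phi_0\,d\mu_0$. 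Since $\phi_0$ is the unique minimiser of $F_{\mu_0}$ up to an additive constant (Theorem \ref{thm:BFJ}), $\psi$ and $\phi_0$ differ by a constant on $X_K^{an}$, so $\phi_0|_{Sk(X)}$ and $\varphi$ differ by a constant on $Sk(X)$, placing $\phi_0|_{Sk(X)}$ in $\mathcal{P}_c$.

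For the uniqueness of $\phi_0|_{Sk(X)}$ as the minimiser of $\mathcal{F}_{\mu_0}$ on $\mathcal{P}_c$, I would take any competitor $\phi\in\mathcal{P}_c$, extend it to $\tilde\phi\in\text{CPSH}(X_K^{an},\mathcal{L})$ by Proposition \ref{prop:maxextension}, and apply Proposition \ref{prop:MAenergyintermediate} to obtain $F_{\mu_0}(\tilde\phi)=\mathcal{F}_{\mu_0}(\phi)$. Combined with $F_{\mu_0}(\phi_0)=\mathcal{F}_{\mu_0}(\phi_0|_{Sk(X)})$, the unique minimisation property of $F_{\mu_0}$ in Theorem \ref{thm:BFJ} transfers directly to $\mathcal{F}_{\mu_0}$ on $\mathcal{P}_c$.

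The only place where something genuinely needs rechecking is that Proposition \ref{prop:maxextension} still goes through in this intermediate setting, since its proof used Fubini-Study approximants built from $\theta_p^l$ and relied on continuity of the cost on the interior of top dimensional faces. Here, however, the cost $c(x,p)=\langle x,p\rangle$ is bilinear and hence globally continuous on $Sk(X)\times B$, and the explicit basis sections $F_0^{l_0}\cdots F_m^{l_m}\otimes\tau_a^{l-\sum d_il_i}$ realise the exact (not merely asymptotic) valuation $\langle x,p\rangle$ already at each finite level $l$. So the Fubini-Study approximation underlying Proposition \ref{prop:maxextension} is actually cleaner than in the large complex structure case, and no new obstacle arises; this is presumably what the author means by ``holds verbatim''.
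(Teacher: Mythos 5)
Your proposal is correct and reproduces exactly what the paper means by ``Theorem~\ref{thm:optimaltransportinterpretation} holds verbatim'': the same two-step argument (double $c$-transform plus uniqueness of the $F_{\mu_0}$ minimiser, then comparison against an arbitrary $\phi\in\mathcal{P}_c$), with Proposition~\ref{prop:MAenergyintermediate} substituted for Proposition~\ref{prop:MAenergy}. Your closing observation that Proposition~\ref{prop:maxextension} only simplifies here — since $c(x,p)=\langle x,p\rangle$ is globally continuous and the valuations $-l^{-1}val_x(\theta^l_i)=\langle x,p\rangle$ are exact at every finite $l$ rather than merely asymptotic — is accurate and is precisely the point the paper leaves implicit.
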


		\begin{rmk}
			The variational problem for $\mathcal{F}_{\mu_0}:\mathcal{P}_c\to \R$ is the Kontorovich dual formulation of the optimal transport problem in \cite[Section 4.2]{Liintermediate}, so we recover the main picture of \cite{Liintermediate}, that the solution of the optimal transport problem encodes both the NA CY metric, and the $t\to 0$ asymptote of the CY potentials on $X_t$. 
		\end{rmk}

		\begin{rmk}
			It is desirable to generalise Thm. \ref{thm:optimaltransportinterpretation2} to other polarised degeneration families of Calabi-Yau manifolds, in the non-large complex structure limit case $1\leq \dim Sk(X)\leq n-1$. It would be useful if the Gross-Siebert canonical basis construction can be extended to this setting. 
		\end{rmk}

		\section{Discussions}\label{sect:discussion}

		We now discuss some connections to the literature and some speculations.

		\subsection{Toric hypersurfaces in toric Fano manifolds}

		Let $\Delta\subset M_\R$ be an integral reflexive Delzant polytope, and $X_\Delta$ be the associated smooth toric Fano manifold of dimension $n+1$, with the anticanonical polarization $L\to X_\Delta$.
		The origin $0\in \Delta$ corresponds to a distinguished section $X_{can}\in H^0(X_\Delta, -K_{X_\Delta})$, which defines the toric boundary of $X_\Delta$. Let $F\in H^0(X_\Delta, -K_{X_{\Delta}})$ be a generic section, such that the divisor $\{ F=0 \}\subset X_\Delta$ is smooth, and intersects all the toric boundary strata transversely, and in particular does not pass through the finite number of toric fixed points on $X_\Delta$. We will consider the family of Calabi-Yau hypersurfaces as $t\to 0$:
		\begin{equation}\label{torichypersurface}
			X_t= \{ X_{can}+tF=0  \}\subset X_\Delta\times \C^*_t.
		\end{equation}
		Algebro-geometrically $X_t$ degenerates to the toric boundary.

		Hultgren et al. \cite{Hultgren}\cite{Hultgren2}\cite{LiFano} introduced the following Kontorovich dual formulation of optimal transport problem, as a tool to study the potential theoretic limit of the Calabi-Yau manifolds. Let $\Delta^\vee\subset N_\R= (M_\R)^*$ be the dual polytope of $\Delta$, so there is a natural pairing $\langle x,p\rangle $ between $x\in \partial\Delta^\vee$ and $p\in \partial\Delta$. Both $\partial \Delta$ and $\partial \Delta^\vee$ have a natural Lebesgue measure $dx$ and $dp$ induced by the integral polytope structure. The $c$-transform for any $\phi\in C^0(\partial \Delta)$ and $\psi\in C^0(\partial \Delta^\vee)$ is defined as
		\[
		\psi^*(x)= \sup_{p\in \partial\Delta}\langle x,p\rangle - \psi(p),\quad
		\phi^*(p)= \sup_{x\in \partial\Delta^\vee} \langle x, p\rangle -\phi(x).
		\]
		On the image of the $c$-transform, we have $(\phi^*)^*=\phi$ and $(\psi^*)^*=\psi$, and we denote this class of convex functions $\phi$ as $\mathcal{P}\subset C^0(\partial \Delta)$. The Kontorovich functional on $\mathcal{P}$ is 
		\begin{equation}\label{eqn:toricfunctional}
			\mathcal{F}(\phi)=  \frac{1}{\int_{\partial \Delta^\vee} dx}\int_{\partial \Delta_\lambda^\vee} \phi dx+ \frac{1}{\int_{\partial \Delta} dp}\ \int_{\partial \Delta} \phi^* dp.
		\end{equation}
		This functional is unchanged if one adds a constant to $\phi$. 
		Hultgren \cite[Thm 5.2]{Hultgren} observed that there is a \emph{unique minimiser} $\phi\in \mathcal{P}$ up to additive constant.

		\begin{rmk}
			This setting has a number of variants: one can relax the ambient toric Fano manifold to be a Fano orbifold, the polarisation does not need to be the anticanonical polarisation, and the similar formulation makes sense for complete intersections in Fano ambient manifolds \cite{Goto}. 	
		\end{rmk}

		The relation to geometry is as follows. Here $\partial \Delta^\vee$ is identified with the essential skeleton of the large complex structure limit of Calabi-Yau hypersurfaces $X_t$. Any $\phi\in \mathcal{P}$ extends canonically to a continuous convex function on 
		$M_\R$, which
		specifies a toric NA semipositive metric on $(X_\Delta^{an}, L)$, hence induces an ansatz metric on $(X_K^{an},L)$ by restriction. The hope is that this agrees with the NA CY metric. This would hold if the unique minimiser $\phi$ satisfies the \emph{real Monge-Amp\`ere equation} on the interior of every $n$-dimensional face of $\partial \Delta$ \cite{Hultgren2}\cite{Liintermediate}. The main difficulty to prove the real Monge-Amp\`ere equation is that both $\partial \Delta$ and $\partial \Delta^\vee$ have many faces, and we need more precise control on where the gradient $\nabla \phi$ lands.

		Two kinds of contrasting results are known:
		
		\begin{itemize}
			\item In a few examples with large discrete symmetry \cite{Hultgren}, and more generally when $\Delta, \Delta^\vee$ satisfy some explicit combinatorial condition \cite{LiFano}, then one can prove the real Monge-Amp\`ere equation for the minimiser $\phi$, and thereby deduce that $\phi$ agrees with the potential of the NA MA metric.

			\item  Hultgren-Andreasson \cite{Hultgren2} proved that the real Monge-Amp\`ere equation for the minimiser $\phi$ is equivalent to the existence of an optimal transport plan with some support conditions.	In many numerical examples \cite{Hultgren2}, it is found that such an optimal transport plan \emph{does not exist}. In other words, this strategy does not always work.

		\end{itemize}

		The results of this paper suggest a conceptual explanation why the above optimal transport problem only works in some cases but not others:

		\begin{itemize}
			\item  
			The class of all NA semipositive metrics on $(X_K^{an},L)$ is  \emph{larger} than the subset obtained by restricting semi-positive toric potentials on $X_\Delta$.  Thus there is no a priori necessity for the NA CY metric to correspond to some potential in $\mathcal{P}$; this good situation only sometimes occur.

			\item  Under the assumptions in section \ref{sect:assumptions}, then  by Thm. \ref{thm:optimaltransportinterpretation}, the unique minimiser to the variational problem $\mathcal{F}_{\mu_0}:\mathcal{P}_c\to \R$ encodes the NA CY metric.  It is plausible that the cost function $c(x,p)$ in this `correct' variational problem does not coincide with the cost function $\langle x,p\rangle$ in the `na\"ive' variational problem (\ref{eqn:toricfunctional}); rather $\langle x,p\rangle$ is only the first approximation to $c(x,p)$, which would explain why the minimisers of the two variational problems only agree in some cases.

			\item The cost function of the variational problem (\ref{eqn:toricfunctional}) relies on a \emph{fixed embedding} of the polarised degeneration family of CY manifolds in  some ambient Fano manifold. On the other hand, the cost function in $\mathcal{F}_{\mu_0}:\mathcal{P}_c\to \R$ relies on the \emph{asymptotic} information about the projective embeddings for high powers of $L\to X$.

		\end{itemize}

		\subsection{Metric SYZ conjecture}

		One of the main motivations for developing potential convergence and the optimal transport interpretation, is to attack the metric version of the SYZ conjecture \cite{SYZ}:

		\begin{conj}
			Given a polarised degeneration  family of $n$-dimensional  Calabi-Yau manifolds $(X_t, \omega_{CY,t})$ near the large complex structure limit, then there exist special Lagrangian $T^n$-fibrations on some subset $U_t\subset X_t$ for $0<|t|\ll 1$, such that the normalised CY measure $\mu_t(U_t)\to 1$ as $t\to 0$. 
		\end{conj}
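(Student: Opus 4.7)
The plan is to combine the hybrid potential convergence of Theorem \ref{thm:hybridconvergence} with the local structure provided by a valuative independent basis (Section \ref{sect:assumptions}) to build approximate special Lagrangian $T^n$-fibrations on a generic region, then upgrade to genuine ones by analytic perturbation. First I would take $U_t$ to be a variant of the region $U_\delta$ from Section \ref{sect:averagepotential}, with $\delta = \delta(t) \to 0$ slowly enough that Lemma \ref{lem:Udeltameasure} still gives $\mu_t(X_t\setminus U_t)\to 0$ as $t\to 0$. On each chart near a depth-$(n+1)$ stratum $E_J$ with local coordinates $z_0,\ldots,z_n$ satisfying $z_0\cdots z_n = t$, the logarithmic tropicalisation $x_i = -\log|z_i|/|\log|t||$ lands in the interior of a top-dimensional face of $Sk(X)$, and the rotations of $z_1,\ldots,z_n$ define an approximate local $T^n$-action.

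Next, on each connected component $O_k\subset Sk(X)\setminus\mathcal{S}$, the valuative independence condition guarantees that every basis section $\theta_p^l$ has a unique dominant monomial $f_\alpha z^\alpha$ (as in the proof of Lemma \ref{lem:asymptoticorthogonality1}), with subleading terms suppressed by $O(e^{-\delta|\log|t||})$. Consequently the Fubini-Study potential $\phi_{l,t}$ from Corollary \ref{cor:uniformFS} is $T^n$-invariant up to exponentially small error, so that $\omega_{CY,t}$ is approximately semiflat with approximate $T^n$-symmetry on $U_t$. The orbits of this approximate action yield an approximate special Lagrangian $T^n$-fibration, whose base can be identified with a subset of the generic locus of $B$ via the leading-exponent correspondence, and whose base metric corresponds to the real Monge-Amp\`ere equation solved at the tropical limit by Theorem \ref{thm:optimaltransportinterpretation}.

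To upgrade approximate to genuine special Lagrangians I would invoke McLean's deformation theory together with an implicit function theorem at scale $|\log|t||^{-1/2}$ (the natural scale appearing in the dyadic decomposition of Section \ref{sect:averagepotential}), in the spirit of the gluing constructions of \cite{HSVZ} and \cite{SunZhang}. The strategy is: write $\omega_{CY,t} = \omega_{l,t} + dd^c u_t$ with $u_t$ small, use the $T^n$-invariance of $\omega_{l,t}$ to solve the linearised special Lagrangian equation on the model, then close up by a contraction mapping.

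The main obstacle will be promoting the $C^0$-hybrid convergence of Corollary \ref{cor:uniformFS} to control in $C^{2,\alpha}$ (or higher) on the scale where the implicit function theorem is applied, with constants uniform as $t\to 0$. The complex Monge-Amp\`ere equation and elliptic regularity suggest this should be possible on the generic region, perhaps via a Savin-type small-perturbation theorem exploiting the approximate $T^n$-symmetry, but the estimates are delicate because the collapsing rate is logarithmic. A further obstacle is the gluing across the codimension-one walls $\mathcal{S}$: approximate fibrations built on adjacent components $O_k, O_{k'}$ differ by automorphisms dictated by the Kontsevich-Soibelman scattering diagram, and assembling them into a coherent global fibration on $U_t$ requires additional input from the Gross-Siebert programme beyond what is invoked in Section \ref{sect:assumptions} of the present paper.
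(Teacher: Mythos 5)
This is a \emph{conjecture} in the paper, not a theorem: the paper explicitly does not prove the metric SYZ conjecture, and the discussion in Section~\ref{sect:discussion} is offered only as a roadmap, with the hard steps flagged as open. Your proposal is really a restatement of that roadmap, and the two ``obstacles'' you name at the end are precisely the unresolved problems. Let me spell out why the gaps are not technicalities but the whole content of the conjecture. First, Theorem~\ref{thm:hybridconvergence} and Corollary~\ref{cor:uniformFS} give only $C^0$ control of the \emph{normalised K\"ahler potential} $\frac{1}{|\log|t||}\phi_{CY,t}$; McLean's theorem and any implicit function theorem for special Lagrangians require $C^{2,\alpha}$ (really $C^\infty$) control of the actual Calabi--Yau metric $\omega_{CY,t}$, uniformly in $t$, at a scale where the collapsing is only logarithmic. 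There is currently no mechanism in this paper, or in the surrounding literature outside of a handful of examples, that upgrades $C^0$-hybrid convergence of potentials to higher-order metric estimates on the generic region; a Savin-type small-perturbation argument is a reasonable guess but nobody has made it work in this setting. Second, Theorem~\ref{thm:optimaltransportinterpretation} identifies $\phi_0|_{Sk(X)}$ with the Kontorovich minimiser but says \emph{nothing} about whether that minimiser solves the real Monge--Amp\`ere equation; the paper is explicit (and cites \cite{Hultgren2}) that the na\"ive optimal transport minimiser can \emph{fail} to solve real MA, and that establishing real MA regularity for the correct cost function is an open problem. Your sentence ``whose base metric corresponds to the real Monge-Amp\`ere equation solved at the tropical limit by Theorem~\ref{thm:optimaltransportinterpretation}'' asserts more than that theorem gives.

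Beyond these, your construction rests on the assumptions of Section~\ref{sect:assumptions} (existence of a valuatively independent canonical basis with the Gross--Siebert multiplication rule), which the paper states is unknown in essentially all compact large complex structure limit examples; so even granting the analytic upgrades, the conclusion would be conditional. Finally, the gluing of local fibrations across the walls $\mathcal{S}$, which you correctly identify as requiring scattering-diagram input, is also not addressed anywhere in the paper. In short: your sketch correctly names the ingredients a proof would need, and correctly names the obstacles, but filling those obstacles is exactly the content of the conjecture, so what you have written is not a proof.
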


		It is by now well understood that the key statement to prove is that in the generic region $U_t$, the CY metric should admit a local semi-flat metric asymptote as $t\to 0$,
		\[
		\omega_{CY,t}\sim  |\log |t|| dd^c u(  \frac{\log |z_1|}{\log |t|}, \ldots         \frac{\log |z_n|}{\log |t|}      ),
		\]
		where $u(x_1,\ldots x_n)$ is a a smooth convex function solving the real Monge-Amp\`ere equation $\det(D^2 u)=\text{const}$. The essential step is to produce a solution of this real MA equation, and justify that the local K\"ahler potential of the CY metric is well approximated by $|\log |t|| u(  \frac{\log |z_1|}{\log |t|}, \ldots         \frac{\log |z_n|}{\log |t|}      )$.

		This paper suggests the following strategy to attack the SYZ conjecture:
		
		\begin{itemize}
			\item  First, one needs to check the valuative independence condition for a sufficiently large collection of interesting examples. This is an unsolved problem in algebraic/tropical geometry.

			\item Next, one needs to compute the valuation of $\theta^l_p$ on $Sk(X)$, and extract the cost function $c(x,p)$ from the asymptotic limit of the valuation functions. One would like to understand $c(x,p)$ as explicitly as possible.

			\item By Thm. \ref{thm:hybridconvergence} and  Thm. \ref{thm:optimaltransportinterpretation}, the $C^0$ potential theoretic limit then boils down to the minimiser $\phi_0$ of the Kontorovich dual formulation of the optimal transport problem. 
			

			\item It remains to study the regularity of the minimiser $\phi_0$, and prove that on an open subset of full Lebesgue measure in $Sk(X)$, the local convex function $\phi_0$ in fact solves the real MA equation. This $\phi_0$ would then play the role of $u$ above.
			
			It should be noted that for classical optimal transport problems between Euclidean domains with the Lebesgue measure, the optimiser solves the real MA equation when the cost function is the bilinear pairing $\langle x,p\rangle$. The new difficulty here is that $Sk(X)$ and $B$ are polyhedral complexes, and the cost function $c(x,p)$ may be complicated. At an intuitive level, it would be helpful if $c(x,p)$ behaves like a bilinear pairing on a large subset of $Sk(X)\times B$.

		\end{itemize}

		\subsection{Metric mirror duality}

		We now make some more speculative remarks about the best hope for a \emph{metric version of mirror symmetry}.

		Recall from Section \ref{sect:theta} that the Gross-Siebert theta function basis on $L\to X$ are parametrised by suitable rational points on the essential skeleton $B=Sk(X^\vee)$ of the mirror family $X^\vee$. In the best cases, both $X$ and $X^\vee$ are polarised large complex structure limits of Calabi-Yau manifolds, and they are \emph{mutually mirror} to each other. In such cases, there will also be a theta function basis on the tensor powers of $L^\vee\to X^\vee$, parametrised by the suitable rational points on $Sk(X)$.

		Suppose that the valuative independence condition is satisfied for both theta basis on $X$ (resp. on $X^\vee$). Then from the asymptotic information about the valuations of the theta function, we can extract the cost function $c(x,p)$ on $Sk(X)\times Sk(X^\vee)$, and another cost function $c^\vee(p,x)$ on $Sk(X^\vee)\times Sk(X)$ upon reversing the roles of $X$ and $X^\vee$.

		\begin{Question}
			Do we have the \emph{duality} property $c(x,p)= c^\vee(p,x)$ for any $x\in SK(X)$ and any $p\in Sk(X^\vee)$?
		\end{Question}

		\begin{rmk}
			This question has close analogy to the `theta reciprocity' in the context of cluster algebras, anounced in the Banff talk `Reciprocity for Valuations of Theta Functions' by Greg Muller.
		\end{rmk}

		This duality would have the following appealing consequence. The potential theoretic limit of the CY metrics on $L\to X_t$ is encoded into the unique minimiser $\phi_0\in \mathcal{P}_c$ for the Kontorovich functional
		\[
		\mathcal{F}_{\mu_0}(\phi)= \int_{Sk(X)} \phi d\mu_0+ \int_{Sk(X^\vee)}  \phi^c d\nu_0,
		\]
		where $d\mu_0$ and $d\nu_0$ are the normalised Lebesgue measures on $Sk(X)$ and $Sk(X^\vee)$. By the same reason, the potential theoretic limit of the CY metric on $L^\vee\to X^\vee$ is encoded into the unique minimiser $\psi_0\in \mathcal{P}_c^\vee$ for the Kontorovich functional
		\[
		\mathcal{F}^\vee_{\nu_0}(\psi): = \int_{Sk(X^\vee)} \psi d\nu_0+ \int_{Sk(X)}  \psi^c d\mu_0.
		\]
		Assuming that $c(x,p)=c^\vee(p,x)$, then we have the Legendre duality,
		\[
		\phi\mapsto \phi^c,  \quad  \psi\mapsto \psi^c.
		\]
		which sets up a canonical bijection between $\mathcal{P}_c$ and $\mathcal{P}_c^\vee$ (\cf Lem. \ref{lem:Pc}). The main observation is that the two Kontorovich functionals are identified by
		\[
		\mathcal{F}_{\mu_0}(\phi)=  \mathcal{F}_{\nu_0}^\vee(\phi^c), \quad \forall \phi\in \mathcal{P}_c.
		\]
		In particular, the two minimisers are related by $c$-transform,
		\[
		\phi_0=\psi_0^c,\quad \psi_0= \phi_0^c.
		\]
		This gives a sense that \emph{the potential theoretic limits for the CY metrics on the two sides of the mirror are related by Legendre duality}.

	\end{document}